\newtheorem{thm}{Theorem}[section]
\newtheorem{lem}[thm]{Lemma}
\newtheorem{prop}[thm]{Proposition}
\newtheorem{cor}[thm]{Corollary}
\theoremstyle{definition}
\newtheorem{dfn}[thm]{Definition}
\newtheorem{ques}[thm]{Question}
\newtheorem{rem}[thm]{Remark}
\newtheorem{ex}[thm]{Example}
\theoremstyle{remark}
\newtheorem*{claim*}{Claim}
\newtheorem*{ac}{Acknowlegments}
\newtheorem*{conv}{Convention}
\numberwithin{equation}{thm}
\def\add{\operatorname{add}}
\def\ann{\operatorname{ann}}
\def\bA{\mathbf{A}}
\def\bB{\mathbf{B}}
\def\bC{\mathbf{C}}
\def\bD{\mathbf{D}}
\def\bE{\mathbf{E}}
\def\bF{\mathbf{F}}
\def\bG{\mathbf{G}}
\def\bH{\mathbf{H}}
\def\bI{\mathbf{I}}
\def\bJ{\mathbf{J}}
\def\bK{\mathbf{K}}
\def\bL{\mathbf{L}}
\def\bM{\mathbf{M}}
\def\C{\mathcal{C}}
\def\cok{\operatorname{Cok}}
\def\db{\operatorname{D^b}}
\def\depth{\operatorname{depth}}
\def\dx{\operatorname{dx}}
\def\dm{\operatorname{D^{--}}}
\def\ds{\operatorname{D^{sg}}}
\def\e{\operatorname{e}}
\def\edim{\operatorname{edim}}
\def\Ext{\mathrm{Ext}}
\def\G{\mathcal{G}}
\def\ge{\geqslant}
\def\grade{\operatorname{grade}}
\def\gt{\operatorname{gt}}
\def\h{\mathrm{H}}
\def\Hom{\mathrm{Hom}}
\def\height{\operatorname{ht}}
\def\I{\operatorname{I}}
\def\im{\operatorname{Im}}
\def\jac{\operatorname{jac}}
\def\k{\mathrm{K}}
\def\le{\leqslant}
\def\level{\operatorname{level}}
\def\lten{\otimes^\mathbf{L}}
\def\m{\mathfrak{m}}
\def\mod{\operatorname{mod}}
\def\N{\mathbb{N}}
\def\n{\mathfrak{n}}
\def\nf{\operatorname{NF}}
\def\ospec{\operatorname{Ospec}}
\def\p{\mathfrak{p}}
\def\pd{\operatorname{pd}}
\def\r{\operatorname{r}}
\def\rank{\operatorname{rank}}
\def\soc{\operatorname{Soc}}
\def\syz{\mathrm{\Omega}}
\def\T{\mathcal{T}}
\def\thick{\operatorname{thick}}
\def\tr{\operatorname{Tr}}
\def\U{\mathcal{U}}
\def\udim{\operatorname{udim}}
\def\V{\mathrm{V}}
\def\X{\mathcal{X}}
\def\XX{\boldsymbol{X}}
\def\xx{\boldsymbol{x}}
\def\Y{\mathcal{Y}}
\def\YY{\boldsymbol{Y}}
\def\yy{\boldsymbol{y}}
\def\ZZ{\mathbb{Z}}
\begin{document}
%\baselineskip 12pt
%\allowdisplaybreaks
\title[Uniformly dominant local rings and Orlov spectra of singularity categories]{Uniformly dominant local rings and\\
Orlov spectra of singularity categories}
\author{Ryo Takahashi}
\address{Graduate School of Mathematics, Nagoya University, Furocho, Chikusaku, Nagoya 464-8602, Japan}
\email{takahashi@math.nagoya-u.ac.jp}
\urladdr{https://www.math.nagoya-u.ac.jp/~takahashi/}
\subjclass[2020]{13D09, 13C60, 13H10}
\keywords{uniformly dominant local ring, dominant index, singularity category, generation time, level, Orlov spectrum, ultimate dimension, Rouquier dimension, Burch ring, (quasi-)decomposable maximal ideal, syzygy, G-dimension}
\thanks{The author was partly supported by JSPS Grant-in-Aid for Scientific Research 23K03070}
\dedicatory{Dedicated to Professor Kei-ichi Watanabe on the occasion of his eightieth birthday}
\begin{abstract}
We define a uniformly dominant local ring as a commutative noetherian local ring with an integer $r$ such that the residue field is built from any nonzero object in the singularity category by direct summands, shifts and at most $r$ mapping cones.
We find sufficient conditions for uniform dominance, by which we show Burch rings and local rings with quasi-decomposable maximal ideal are uniformly dominant.
For a uniformly dominant excellent equicharacteristic isolated singularity, we get an upper bound of the Orlov spectrum of the singularity category.
We prove uniform dominance is preserved under basic operations, and give techniques to construct uniformly dominant local rings.
An application of our methods to local rings with decomposable maximal ideal is provided as well.
\end{abstract}
\maketitle
%\tableofcontents
%%%%%%%%%%%%%%%%%%%%%%%%%%%%%%%%%%%%%%%%%%%
\section{Introduction}

As one of the main results of their celebrated paper \cite{BFK}, Ballard, Favero and Katzarkov prove the following theorem.
Denote by $\jac(-)$ and $\ell\ell(-)$ the Jacobian ideal and the Loewy length, respectively.

\begin{thm}[Ballard--Favero--Katzarkov \cite{BFK}]\label{43}
Let $(R,\m,k)$ be a complete equicharacteristic local hypersurface of dimension $d$ such that $k$ is algebraically closed and of characteristic $0$.
Suppose that $R$ has an isolated singularity.
Put $J=\jac R$ and $l=\ell\ell(R/J)$.
Then, all the nonzero objects of the singularity category $\ds(R)$ have generation time at most $2(d+2)l-1$, i.e., the equality $\ds(R)=\langle X\rangle_{2(d+2)l}$ holds for all $0\ne X\in\ds(R)$.
\end{thm}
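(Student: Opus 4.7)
The plan is to pass through Orlov's equivalence for hypersurfaces, exploit the classical observation that the Jacobian ideal acts as zero on identity morphisms of matrix factorizations, and then run a Koszul-type tower of mapping cones to recover the residue field from any nonzero object in a uniformly bounded number of steps.

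First I would present $R$ as $S/(f)$ with $(S,\n,k)$ a complete regular local ring of dimension $d+1$ containing a coefficient field $k$ and $f\in\n^2$ defining an isolated singularity, and invoke Orlov's theorem to identify $\ds(R)$ with the homotopy category $\mathrm{HMF}(S,f)$ of matrix factorizations of $f$. Next I would show that $J\cdot\mathrm{id}_X=0$ in $\mathrm{HMF}(S,f)$ for every object $X$: differentiating the matrix factorization identity $\varphi\psi=f\cdot I$ with respect to a regular system of parameters $x_1,\dots,x_{d+1}$ of $S$ yields $(\partial_if)\cdot I=(\partial_i\varphi)\psi+\varphi(\partial_i\psi)$, an explicit null-homotopy of $(\partial_if)\cdot\mathrm{id}$. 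Since $\m^l\subseteq J$ by definition of $l=\ell\ell(R/J)$, every element of $\m^l$ likewise annihilates all identities, hence all hom spaces, in $\ds(R)$.

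Given a nonzero $X\in\ds(R)$, I would then build $k$ from $X$ by a tower of mapping cones. The null action of elements of $\m^l$ on identities converts, in the Rouquier-style manner, each chosen generator into a cone step that enlarges the thick subcategory generated by $X$. Combining such cones with the length-$l$ filtration of $R/\m^l$ whose successive quotients are direct sums of $k$, and using that $R$ has embedding dimension at most $d+1$, one assembles $k$ as a direct summand of an iterated cone of shifts and summands of $X$. Bookkeeping for the $2$-periodic structure of matrix factorizations, which effectively doubles the cost of each layer when transported through Orlov's equivalence, produces the bound $2(d+2)l-1$ on the generation time, i.e.\ $\ds(R)=\langle X\rangle_{2(d+2)l}$.

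The main obstacle is not the existence of a finite bound, which follows essentially from the Jacobian annihilation together with the Rouquier-style cone technique, but rather the sharp numerical constant $2(d+2)l-1$. Keeping precise control on the number of cones needed to traverse both the Koszul layer (bounded by the embedding dimension plus an extra unit) and the Loewy-length-$l$ filtration layer, and on how the $2$-periodicity of $\mathrm{HMF}(S,f)$ forces a parity doubling, is the delicate part; obtaining the stated constant without extraneous loss demands a careful matching between mapping cone steps in $\mathrm{HMF}(S,f)$ and in $\ds(R)$.
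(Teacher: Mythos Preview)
The paper does not prove this statement at all: Theorem~\ref{43} is quoted from \cite{BFK} as motivation in the introduction, with no proof supplied. The paper's own contributions (Theorems~\ref{33}, \ref{39}, \ref{41}, etc.) generalize the \emph{conclusion} of Theorem~\ref{43} to broader classes of rings via an entirely different route, namely the syzygy-theoretic machinery of Sections~3--5 culminating in Corollaries~\ref{40} and~\ref{28}; nowhere does the paper revisit the hypersurface case through matrix factorizations.

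Your sketch is in fact the outline of the original Ballard--Favero--Katzarkov argument: pass to $\mathrm{HMF}(S,f)$ via Orlov's equivalence, observe that differentiating $\varphi\psi=f\cdot I$ gives explicit null-homotopies showing $\jac R$ annihilates every identity morphism, and then run a Koszul/Loewy cone tower. So as a reconstruction of \cite{BFK} it is on the right track. But as written it is not a proof: you explicitly concede that the bookkeeping producing the exact constant $2(d+2)l-1$ is ``the delicate part'' and do not carry it out. The passage from ``$\m^l$ annihilates hom spaces'' to ``$k$ is built from $X$ in at most $2(d+2)l$ steps'' requires the ghost-map/level machinery and a precise count through the Koszul complex layers, none of which appears here. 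In short, there is nothing to compare against in the present paper, and your proposal remains a plan rather than a proof of the stated bound.
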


\noindent
To explain the terminology introduced by Ballard, Favero and Katzarkov \cite{BFK}, let $\T$ be a triangulated category.
The {\em generation time} of an object $X$ of $\T$ is by definition the least integer $n$ such that every object of $\T$ can be built out of $X$ by taking finite direct sums, direct summands, shifts and at most $n$ extensions.
The {\em Orlov spectrum} of $\T$ is defined as the set of finite generation times of objects of $\T$, and the {\em ultimate dimension} of $\T$ is defined to be the supremum of this set.
The conclusion of Theorem \ref{43} especially says that the singularity category $\ds(R)$ has finite Orlov spectrum and ultimate dimension at most $2(d+2)l-1$.

In this paper, we define a {\em uniformly dominant} local ring as a local ring $R$ such that there exists an integer $r$ such that in $\ds(R)$ the residue field of $R$ can be built out of each nonzero object by taking direct summands, shifts and at most $r$ mapping cones.
We call the infimum of such integers $r$ the {\em dominant index} of $R$.
When $R$ is a uniformly dominant local ring, $R$ is a dominant local ring in the sense of \cite{dlr}, whence $R$ is Tor-friendly and Ext-friendly in the sense of \cite{AINS}, in particular, the Auslander--Reiten conjecture holds for $R$, and moreover, the thick subcategories of $\ds(R)$ are completely classified under some assumptions; see \cite{dlr} for the details.

For a local ring $R$, we denote by $\ann\ds(R)$ the {\em annihilator} of $\ds(R)$, that is to say, the set of elements $a$ of $R$ such that for each object $X$ of $\ds(R)$ the multiplication morphism $X\xrightarrow{a}X$ in $\ds(R)$ is zero; note that this is an ideal of $R$.
For a finitely generated $R$-module $M$, we denote by $\nu(M)$ the minimal number of generators of $M$, and by $\syz^nM$, where $n\in\N$, the $n$th syzygy of $M$ in a minimal free resolution of $M$.
Motivated by Theorem \ref{43}, we shall prove the following theorem.

\begin{thm}[Corollaries \ref{40} and \ref{28}]\label{33}
Let $(R,\m,k)$ be a local ring of depth $t$.
Put $s=1$ when $t=0$, and put $s=2^{\edim R}$ when $t>0$.
Then the following statements hold true.
\begin{enumerate}[\rm(1)]
\item
If the syzygy $\syz^{t+1}k$ (resp. $\syz^tk$) is a direct summand of a finite direct sum of copies of $\syz^{t+2}k$, then $R$ is a uniformly dominant local ring with dominant index at most $s(2t+3)-1$ (resp. $s(2t+4)-1$).
\item
Assume that $R$ is excellent, equicharacteristic and has an isolated singularity.
Let $J$ be an $\m$-primary ideal contained in $\ann\ds(R)$.
Put $m=\nu(J)$ and set $l=\ell\ell(R/J)$.
Suppose that $R$ is uniformly dominant with dominant index $n$.
Then every nonzero object of $\ds(R)$ has generation time at most $(n+1)(m-t+1)l-1$.
In particular, $\ds(R)$ has finite Orlov spectrum and has ultimate dimension at most $(n+1)(m-t+1)l-1$.
\end{enumerate}
\end{thm}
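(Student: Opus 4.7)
The plan is to split the proof along the two parts of the statement. Part~(1) asserts uniform dominance for local rings whose residue-field syzygies satisfy a self-similarity condition; part~(2) bounds the Orlov spectrum of $\ds(R)$ in terms of the dominant index and the Jacobian-like data $(m,l)$.

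For part~(1), I would show directly that from any nonzero $X\in\ds(R)$ one can build $k$ using at most $s(2t+3)$ (respectively $s(2t+4)$) cones, up to summands and shifts. First replace $X$ by a finitely generated $R$-module $M$ with no free summand; this is legitimate because shifts in $\ds(R)$ correspond, modulo free summands, to syzygies. The hypothesis states that $\syz^{t+1}k$ (resp.\ $\syz^t k$) is a direct summand of some $(\syz^{t+2}k)^{\oplus N}$ as an honest $R$-module. Since $\syz^{t+2}k=\syz(\syz^{t+1}k)$, this is a self-similarity relation that permits extraction of $\syz^{t+1}k$ or $\syz^t k$ from a sufficiently high syzygy of $M$; a bounded-length dévissage through the short exact sequences $0\to\syz^{i+1}k\to R^{b_i}\to\syz^i k\to0$ then climbs back to $k$. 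When $t>0$, the initial reduction of $M$ to the appropriate syzygy level goes through a Koszul-type bookkeeping over a minimal generating set of $\m$ whose total free-summand count is bounded by $2^{\edim R}$, which accounts for the factor $s=2^{\edim R}$; when $t=0$ no such passage is needed and $s=1$. The additive constants $2t+3$ versus $2t+4$ record the cone count of the two-stage dévissage: roughly $t+O(1)$ cones to descend from $X$ to the relevant syzygy level, plus $t+O(1)$ cones to ascend back to $k$, with one extra cone in the $\syz^t k$ case reflecting the additional syzygy step.

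For part~(2), I would combine the uniform dominance of part~(1) with a Ballard--Favero--Katzarkov-type generation estimate for $k$. Uniform dominance with index $n$ gives $k\in\langle X\rangle_{n+1}$ for every nonzero $X\in\ds(R)$. It therefore suffices to prove
\[
\ds(R)=\langle k\rangle_{(m-t+1)l},
\]
since the standard multiplicativity $\level^{\langle X\rangle}(Z)\le\level^{\langle Y\rangle}(Z)\cdot\level^{\langle X\rangle}(Y)$, applied with $Y=k$, then yields generation time at most $(n+1)(m-t+1)l-1$ for every nonzero $X$. The displayed bound itself is proved by the Koszul-complex method of Ballard--Favero--Katzarkov: since $J\subseteq\ann\ds(R)$ is $\m$-primary with $m=\nu(J)$ generators, the Koszul complex on those generators acts on $\ds(R)$ and builds every object from modules over $R/J$, and the Loewy filtration of $R/J$, of length $l$, converts each such module into a $k$-filtration of $l$ steps; the depth correction $-t$ comes from first splitting off a maximal $R$-regular sequence inside $J$, shortening the effective Koszul complex by $t$ terms. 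The excellence, equicharacteristic, and isolated-singularity hypotheses guarantee that a suitable $J$ exists in $\ann\ds(R)$ and that this Koszul/annihilator argument can be carried out inside $\ds(R)$.

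The principal obstacle is the exact constant tracking in part~(1), in particular the exponential factor $s=2^{\edim R}$ and the difference between $2t+3$ and $2t+4$ depending on whether the hypothesis is on $\syz^{t+1}k$ or $\syz^t k$. This forces a careful case-split dévissage that separates $t=0$ from $t>0$ and keeps precise count of how many cones each syzygy passage consumes. Part~(2) is then conceptually a composition of two known generation bounds, its main subtlety being the depth correction $m-t+1$ rather than $m+1$, which requires the regular sequence inside $J$ to be split off without inflating the Loewy length $l$.
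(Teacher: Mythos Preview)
Your outline of part~(2) is essentially what the paper does: combine $k\in\langle X\rangle_{n+1}$ with the generation bound $\ds(R)=\langle k\rangle_{(m-t+1)l}$ and multiply. The paper simply cites Liu \cite{L} for the latter bound rather than reproving it, but your Koszul/Loewy sketch is a reasonable description of how such a result is obtained.

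Part~(1), however, has a genuine gap. You assert that the self-similarity hypothesis on the syzygies of $k$ ``permits extraction of $\syz^{t+1}k$ or $\syz^t k$ from a sufficiently high syzygy of $M$'', but you give no mechanism for this, and the d\'evissage you propose through the sequences $0\to\syz^{i+1}k\to R^{b_i}\to\syz^ik\to0$ does nothing of the sort: those sequences only relate syzygies of $k$ to one another (and in $\ds(R)$ they are just shifts, costing no cones), not to an arbitrary $M$. The entire difficulty is to produce a copy of some $\syz^jk$ starting from $M$, and your sketch contains no idea for this step. The paper's actual mechanism is Auslander--Bridger theory: one uses the isomorphism $\syz^2M\cong\Hom_R(\tr M,\m)$, and more generally analyzes $\Hom_R(\tr M,N)$ and $\Ext^1_R(\tr M,N)$ for $N$ with $\syz^nN\in\add(R\oplus\syz^{n+1}k)$, after first arranging that $\syz^{t+1}M$ is $(t+1)$-torsionfree. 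The constants $2t+3$ and $2t+4$ arise from counting extensions in this Hom/Ext calculus (Theorem~\ref{19} and Theorem~\ref{9}), not from an ascent--descent through syzygy levels.

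Your account of the factor $s=2^{\edim R}$ is also off. It is not ``Koszul-type bookkeeping'' or a ``free-summand count''; it is the cost of replacing $M$ by a module $K\in[M]_{2^e}$ that is \emph{locally free on the punctured spectrum} (Lemma~\ref{21}(2)), built by iterating $e=\edim R$ pushout constructions along generators of $\m$. This replacement is needed so that $\Ext^i(K,R)$ has finite length, which in turn is what makes $\syz^{t+1}K$ satisfy the torsionfree condition required to run the transpose argument. When $t=0$ this replacement is unnecessary because $\syz M$ is automatically $1$-torsionfree, which is why $s=1$ in that case.
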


\noindent
It is a basic fact that if a local ring $(R,\m,k)$ of depth $t$ is a singular hypersurface, then $\syz^tk$ is isomorphic to $\syz^{t+2}k$.
Relating to this, Dao, Kobayashi and Takahashi \cite{burch} show that if $R$ is a hypersurface, then it is Burch and that if $R$ is a singular Burch ring, then $\syz^tk$ is a direct summand of $\syz^{t+2}k$.
In this paper, we prove that if $\m$ is quasi-decomposable in the sense of Nasseh and Takahashi \cite{fiber}, then $\syz^{t+1}k$ is a direct summand of $\syz^{t+2}k$ (Proposition \ref{10}).
Thus, the assumption of Theorem \ref{33}(1) is satisfied if either $\m$ is quasi-decomposable or $R$ is a singular Burch ring, paticularly if $R$ is a singular hypersurface.
As such an ideal $J$ as in Theorem \ref{33}(2), one can always take $\ann\ds(R)$, and can even take $\jac R$ if $R$ is a complete Cohen--Macaulay local ring and $k$ is perfect.
Therefore, Theorem \ref{33} considerably extends Theorem \ref{43} in terms of providing a finite uniform bound of the generation times of nonzero objects of the singularity category (the bound is itself looser).

Combining Theorems \ref{43}, \ref{33}, what is stated around them, and the examples of Burch rings and local rings with quasi-decomposable maximal ideal given in Example \ref{37}, and summarizing them, we get the following diagram of implications, where $(R,\m,k)$ is a local ring of depth $t$ and $k$ is assumed to be infinite for simplicity.
\begin{multicols}{3}
$\xymatrix@R1.48pc@C1pc{
\bA\ar@{=>}@/_30pt/[rrddddddd]\ar@{=>}[rdd]\ar@{=>}[rrdd]\ar@{=>}[rrd]|-\spadesuit\\
&&\bB\ar@{=>}[ld]\ar@{=>}[d]\ar@{=>}[rrdd]&&&\bC\ar@{=>}[llld]\ar@{=>}@/^40pt/[llldddddd]\\
&\bD\ar@{=>}[rddddd]|-\clubsuit\ar@{=>}[rd]&\bE\ar@{=>}[d]\ar@{=>}[rrd]|-\heartsuit&&\bF\ar@{=>}[d]\\
&&\bG\ar@{=>}[d]&&\bH\ar@{=>}[d]\\
&&\bI\ar@{=>}[rd]&&\bJ\ar@{=>}[ld]\\
&&&\bK\ar@{=>}[d]\ar@{=>}[ldd]|-\diamondsuit\\
&&&\bL\\
&&\bM
}$
\columnbreak

\ 
$\begin{array}{rl}
\bA=&\text{$R$ is regular}\\
\bB=&\text{$R$ deforms to an $(\mathrm{A}_1)$-singularity of dimension $1$}\\
\bC=&\text{$\widehat R$ is a rational surface singularity}\\
\bD=&\text{$R$ is a hypersurface}\\
\bE=&\text{$R$ is Cohen--Macaulay with minimal multiplicity}\\ \bF=&\text{$R$ is a fiber product of local rings over $k$}\\
\bG=&\text{$R$ is Burch}\\
\bH=&\text{$\m$ is quasi-decomposable}\\
\bI=&\text{$\syz^tk$ is a summand of a direct sum of copies of $\syz^{t+2}k$}\\
\bJ=&\text{$\syz^{t+1}k$ is a summand of a direct sum of copies of $\syz^{t+2}k$}\\
\bK=&\text{$R$ is uniformly dominant}\\
\bL=&\text{$R$ is dominant}\\
\bM=&\text{the generation times of nonzero objects of $\ds(R)$ form a}\\
&\text{finite subset of $\N$ (so $\ds(R)$ has finite Orlov spectrum)}\smallskip
\\
\spadesuit=&\text{if $R$ has dimension at least $2$}\\
\heartsuit=&\text{if $R$ is not a hypersurface}\\
\clubsuit=&\text{if $R$ is a complete equicharacteristic isolated singularity}\\
&\text{and $k$ is algebraically closed of characteristic $0$}\\
\diamondsuit=&\text{if $R$ is an excellent equicharacteristic isolated singularity}\\
\end{array}$
\end{multicols}

\noindent
The implication $(\bC\Rightarrow\bE)$ is shown to hold true by Artin \cite{Ar}, Sato and Takagi \cite{ST}, $(\bD,\bE\Rightarrow\bG\Rightarrow\bI)$ by Dao, Kobayashi and Takahashi \cite{burch}, $(\bE,\bF\Rightarrow\bH)$ by Nasseh and Takahashi \cite{fiber}, and $(\bD\Rightarrow\bM)$ by Ballard, Favero and Katzarkov \cite{BFK}.
In this paper we do prove $(\bH\Rightarrow\bJ)$ and $(\bI,\bJ\Rightarrow\bK\Rightarrow\bM)$.
The implication $(\bC\Rightarrow\bM)$ is a consequence of $(\bC\Rightarrow\bK\Rightarrow\bM)$ as $\bC$ implies $R$ is an isolated singularity.
All the other implications are clear.
Also, in this paper we present examples showing that $(\bH\nLeftarrow\bJ)$ and $(\bI,\bJ\nLeftarrow\bK)$; see Examples \ref{44} and \ref{45}.

We explore how uniform dominance is preserved under basic operations as well, and get the theorem below.
Thanks to this, various local rings turn out to be uniformly dominant; see Corollaries \ref{31}, \ref{13} and \ref{32}.

\begin{thm}[Theorem \ref{5} and Corollary \ref{11}]\label{39}
Let $(R,\m,k)$ be a local ring.
Let $x$ be an $R$-regular element in $\m$.
If $R/(x)$ is uniformly dominant with dominant index $n$, then $R$ is uniformly dominant with dominant index at most $2n+1$.
Conversely, if $R$ is uniformly dominant with dominant index $n$ and if $x$ is not in $\m^2$, then $R/(x)$ is uniformly dominant with dominant index at most $2n+1$.
In particular, uniform dominance ascends and descends along the completion map and a formal power series extension map of local rings.
\end{thm}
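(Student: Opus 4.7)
My approach uses the two triangulated change-of-rings functors: the forgetful $F\colon\ds(S)\to\ds(R)$ and the derived tensor $G:=-\lten_R S\colon\ds(R)\to\ds(S)$, where $S:=R/(x)$. Both are well-defined on singularity categories because $x$ being $R$-regular makes $S$ a perfect $R$-complex. The short exact sequence $0\to R\xrightarrow{x}R\to S\to 0$ furnishes, for every $X\in\db(R)$, a triangle
\[
X\xrightarrow{x}X\to X\lten_R S\to X[1],
\]
which shows that $X\lten_R S$ is built from $X$ with a single mapping cone.

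\textbf{Part (a).} Starting from nonzero $X\in\ds(R)$, the key step is to check that $X\lten_R S\ne 0$ in $\ds(S)$, so that uniform dominance of $S$ applies. I would apply $\Hom_R(-,k[*])$ to the Koszul triangle; since $x$ acts as zero on $k$, the long exact sequence decomposes into short exact sequences
\[
0\to\Ext_R^{i-1}(X,k)\to\Ext_R^i(X\lten_R S,k)\to\Ext_R^i(X,k)\to 0,
\]
and the nonvanishing of $\Ext_R^i(X,k)$ for arbitrarily large $i$ (forced by $X\ne 0$ in $\ds(R)$) prevents $X\lten_R S$ from being $R$-perfect, hence also from being $S$-perfect (since $S$ itself is $R$-perfect). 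Uniform dominance of $S$ then yields $k\in\langle X\lten_R S\rangle^{\ds(S)}$ in at most $n$ cones, which $F$ transfers verbatim to $\ds(R)$. Combining with $X\lten_R S\in\langle X\rangle_2^{\ds(R)}$ via Rouquier's composition rule $\langle X\rangle_a*\langle X\rangle_b\subseteq\langle X\rangle_{a+b}$ produces $k\in\langle X\rangle_{2(n+1)}^{\ds(R)}$, i.e.\ at most $2n+1$ mapping cones.

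\textbf{Part (b).} For nonzero $Y\in\ds(S)$, the hypothesis $x\notin\m^2$ is used to invoke the standard change-of-rings Betti identity $\beta_i^R(Y)=\beta_i^S(Y)+\beta_{i-1}^S(Y)$ (which requires $x$ to be a minimal generator; it fails without this, as in $R=k[[x,y]]$, $S=R/(xy)$, $Y=S/(y)$, where $\pd_R Y<\infty$ while $\pd_S Y=\infty$). This gives $\pd_R Y=\pd_S Y+1$, so $Y$ remains nonzero in $\ds(R)$, and uniform dominance of $R$ supplies $k\in\langle Y\rangle^{\ds(R)}$ with at most $n$ cones. Applying $G$ and exploiting that $x$ annihilates both $k$ and the $S$-complex $Y$, the Koszul computation gives $k\lten_R S=k\oplus k[1]$ and $Y\lten_R S=Y\oplus Y[1]$; extracting the $k$-summand concludes $k\in\langle Y\rangle^{\ds(S)}$ with at most $n$ cones.

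\textbf{In particular and the main obstacle.} The formal-power-series extension $R\hookrightarrow R[[y]]$ falls out of (a) and (b), since $y$ is $R[[y]]$-regular with $y\notin\m_{R[[y]]}^2$ and $R=R[[y]]/(y)$. For the completion map $R\to\hat R$, I would factor it through formal-power-series rings and regular sequences via Cohen's structure theorem, or invoke faithful flatness of $\hat R$ over $R$ (together with $\hat R/\m\hat R=k$) to transfer generation data between $\ds(R)$ and $\ds(\hat R)$. The real crux throughout is the Ext short exact sequence in (a) excluding $X\lten_R S=0$ in $\ds(S)$; once this technical point is settled, the bound $2n+1$ (rather than a naively expected $n+1$) is precisely what the multiplicative composition law for Rouquier's generation indices dictates when paired with the one-cone cost of reaching $X\lten_R S$.
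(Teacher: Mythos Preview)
Your proposal is correct and follows essentially the same route as the paper: the same pair of functors between $\ds(R)$ and $\ds(S)$, the Koszul triangle yielding $\Psi\Phi(X)\in\langle X\rangle_2$, the splitting $C\lten_R S\cong C\oplus C[1]$ for $C\in\db(S)$, and the Cohen presentation $\widehat R\cong R[\![x_1,\dots,x_e]\!]/(x_i-a_i)$ for the completion case. The only cosmetic differences are that for nonvanishing in part~(a) the paper computes $\pd_{S}(X\lten_R S)=-\inf(X\lten_R k)=\pd_R X$ directly rather than via your Ext short exact sequences, and in part~(b) the paper reduces to a module $M$ and invokes \cite[Theorem 2.2.3]{A} rather than stating the Betti identity explicitly; you should make the reduction-to-a-module step explicit in~(b) before applying $\beta_i^R(Y)=\beta_i^S(Y)+\beta_{i-1}^S(Y)$, since that identity is usually formulated for modules.
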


We mention an application of our methods.
The following theorem is a main result of \cite{fiber} concerning the structure of syzygies over a local ring with decomposable maximal ideal.

\begin{thm}[Nasseh--Takahashi \cite{fiber}]\label{42}
Let $(R,\m,k)$ be a local ring with $\m$ decomposable.
Then $\m$ is a direct summand of $\syz^3M\oplus\syz^4M\oplus\syz^5M$ for every finitely generated $R$-module $M$ with infinite projective dimension.
\end{thm}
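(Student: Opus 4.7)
My plan is to exploit the decomposition $\m=I\oplus J$ into nonzero ideals, which forces $IJ\subseteq I\cap J=0$ and realizes $R$ as the fiber product $R\cong R/I\times_k R/J$. The starting observation is that $\syz k=\m=I\oplus J$ already splits, which is the decomposable instance of the quasi-decomposable structure recorded in Proposition \ref{10}.

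Concretely, I would take a minimal free resolution $\cdots\to F_1\to F_0\to M\to 0$ and, by minimality, decompose each differential uniquely as $d_i=B_i+C_i$ with the entries of $B_i$ lying in $I$ and those of $C_i$ lying in $J$. The orthogonality $IJ=0$ forces the identity $d_id_{i+1}=0$ to split into $B_iB_{i+1}=0=C_iC_{i+1}$ componentwise, so $(F_\bullet,B_\bullet)$ and $(F_\bullet,C_\bullet)$ become complexes of free $R$-modules in their own right. Moreover $\m F_i=IF_i\oplus JF_i$ as $R$-submodules of $F_i$, hence each syzygy $\syz^{i+1}M=\im d_{i+1}$ sits inside $IF_i\oplus JF_i$ with projections onto the two factors equal to $B_{i+1}F_{i+1}$ and $C_{i+1}F_{i+1}$ respectively.

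The central technical claim I would aim to prove is a key lemma: for any finitely generated $R$-module $N$ without free summand, $\m$ is a direct summand of $\syz^2N\oplus\syz^3N\oplus\syz^4N$. Given the lemma, one applies it with $N=\syz M$, which has no free summand because $M$ has infinite projective dimension, and immediately obtains $\m$ as a direct summand of $\syz^3M\oplus\syz^4M\oplus\syz^5M$. To establish the lemma I would try to build $R$-linear splittings of the projections from $\syz^{i+1}N\subseteq IF_i\oplus JF_i$ onto the images $B_{i+1}F_{i+1}$ and $C_{i+1}F_{i+1}$, using that $IF$ carries an $R/J$-module structure (and dually $JF$ an $R/I$-module structure) by virtue of $IJ=0$, and then identify $I$ and $J$ themselves as summands inside these images by the fiber-product form of the minimal $R/I$- and $R/J$-resolutions of the relevant quotients.

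The main obstacle is the lemma itself: upgrading the componentwise projections to honest $R$-linear direct summands of the syzygies. The orthogonality $IJ=0$ supplies the retractions one needs, but it does so asymmetrically — the $I$- and $J$-summands of $\m$ need not appear at the same syzygy index of $N$, and a given summand may only materialize after one or two further applications of the syzygy functor. Tracking this asymmetry across three consecutive syzygies is precisely what forces the three-term form $\syz^3M\oplus\syz^4M\oplus\syz^5M$ in the statement, rather than a single syzygy or a two-term sum.
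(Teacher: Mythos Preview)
Your proposal sets up correct infrastructure --- the decomposition $d_i=B_i+C_i$ with $B_iB_{i+1}=C_iC_{i+1}=0$ is valid --- but it does not close the gap you yourself name. The projections from $\syz^{i+1}M\subseteq IF_i\oplus JF_i$ onto $\im B_{i+1}$ and $\im C_{i+1}$ are surjections, not a priori split ones; indeed $\im d_{i+1}$ can be a proper submodule of $\im B_{i+1}\oplus\im C_{i+1}$ (already for $M=R/(a+b)$ with $0\ne a\in I$ and $0\ne b\in J$, the first syzygy $(a+b)R$ is strictly smaller than $aR\oplus bR$). You propose to ``build $R$-linear splittings'' but supply no mechanism for doing so, nor for locating $I$ and $J$ as summands inside the images once a splitting is in hand. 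As written, the ``key lemma'' is a restatement of the goal rather than a step toward it.

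The paper (which in Theorem~\ref{36}(1) actually proves the sharper two-term statement with $\syz^3M\oplus\syz^4M$) bypasses this obstacle through a different device: the identification $\syz_R^2M\cong(\tr M)^\ast\cong\Hom_R(\tr M,\m)$ of Lemma~\ref{1}(2). Because $\m=I\oplus J$, this \emph{immediately} yields an honest direct sum $\syz_R^2M\cong\Hom_R(\tr M,I)\oplus\Hom_R(\tr M,J)$ --- the splitting you want comes for free once one dualizes into $\m$ rather than merely embeds a syzygy into $\m F_i$. Proposition~\ref{2}(1) then rewrites each piece via Lemma~\ref{1}(3) as a second syzygy over $R/J$ (resp.\ $R/I$) plus explicit copies of $I$ (resp.\ $J$), and a Betti-number count (Proposition~\ref{2}(2)) forces at least one of $I,J$ to be a summand of $\syz_R^3M$ whenever $\pd_RM\ge2$; the other then appears in $\syz_R^4M$ because $J$ is a summand of $\syz_RI$ and $I$ is a summand of $\syz_RJ$ (Proposition~\ref{3}(1)). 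The idea missing from your plan is precisely this passage through the transpose: it is $\Hom_R(\tr M,-)$ applied to the decomposition $\m=I\oplus J$, not the entrywise decomposition of the differentials, that manufactures the required direct summands.
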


The methods which we develop to explore uniformly dominant local rings give rise to the following theorem as a byproduct, which improves the above Theorem \ref{42}.

\begin{thm}[Theorem \ref{36}]\label{41}
Let $(R,\m,k)$ be a local ring with $\m$ decomposable.
Let $M$ be a finitely generated $R$-module of infinite projective dimension.
Then $\m$ is a direct summand of $\syz^3M\oplus\syz^4M$.
If $\m$ is not a direct summand of $\syz^5M$ or $\syz^6M$, then the $\m$-adic completion $\widehat R$ of $R$ is an $(\mathrm{A}_1)$-singularity of dimension one, i.e., $\widehat R\cong S/(xy)$ for some complete regular local ring $S$ of dimension two with a regular system of parameters $x,y$.
\end{thm}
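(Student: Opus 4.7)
The plan is to refine the syzygy analysis behind Nasseh--Takahashi's Theorem \ref{42} by exploiting the rigid algebraic structure provided by the decomposition $\m = I \oplus J$ into nonzero ideals. The starting observation is that $IJ \subseteq I \cap J = 0$. Fix a minimal free resolution $\cdots \to F_2 \xrightarrow{d_2} F_1 \xrightarrow{d_1} F_0 \to M \to 0$. Since each differential $d_n$ has entries in $\m$, split it uniquely as $d_n = d_n^I + d_n^J$ with entries in $I$ and $J$ respectively. The relation $IJ = 0$ immediately yields $d_n^I d_{n+1}^J = 0 = d_n^J d_{n+1}^I$; combining this with $d_n d_{n+1} = 0$ and $I \cap J = 0$ further gives $d_n^I d_{n+1}^I = 0 = d_n^J d_{n+1}^J$. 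Thus the minimal resolution supports two commuting sub-differentials, and this bookkeeping drives the whole argument.

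For the first assertion, it suffices, by the orthogonality $IJ = 0$, to produce $I$ and $J$ as separate direct summands within $\syz^3 M$ and $\syz^4 M$, since then $\m = I \oplus J$ automatically splits off $\syz^3 M \oplus \syz^4 M$. We consider the short exact sequences $0 \to \syz^n M \to F_{n-1} \to \syz^{n-1} M \to 0$ and analyze the $I$-reductions of the differentials modulo $J$. The infinite projective dimension of $M$ forces $d_2^I \bmod J$ to be nonzero at a minimal generator; Nakayama's lemma then splits off a copy of $I$ either directly from $\syz^3 M$ or, after a one-step detour through the $J$-part, from $\syz^4 M$. The symmetric argument with $I$ and $J$ interchanged produces $J$ as a summand in the same index range. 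This is a module-theoretic analogue of the calculation behind Proposition \ref{10}.

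For the second assertion, apply the first part to the infinite projective dimension module $\syz^2 M$: one obtains $\m$ as a summand of $\syz^3(\syz^2 M) \oplus \syz^4(\syz^2 M) = \syz^5 M \oplus \syz^6 M$. Under the hypothesis that $\m$ is a summand of neither $\syz^5 M$ nor $\syz^6 M$ individually, the embedding must split diagonally, forcing (up to relabeling) $I$ to sit as a summand of $\syz^5 M$ and $J$ of $\syz^6 M$ but not conversely. Iterating, $\syz^n M$ for all large $n$ becomes an iterated direct sum of shifted copies of $R/I$ and $R/J$ with alternating multiplicities, so the minimal resolution is eventually $2$-periodic. Eisenbud's characterization of periodic resolutions together with Cohen's structure theorem identify $\widehat R$ with a hypersurface $S/(f)$ for a $2$-dimensional regular local ring $S$; the decomposability $\m = (x) \oplus (y)$ together with the two minimal primes visible in the resolution then forces $f = xy$ up to unit, giving $\widehat R \cong S/(xy)$.

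The main obstacle lies in the last step of the second assertion: parlaying the rigid alternation $I \subseteq \syz^5 M$, $J \subseteq \syz^6 M$ into the specific classification $\widehat R \cong S/(xy)$. This requires combining the $I/J$-swap under syzygies with depth considerations (which pin down $\dim \widehat R = 1$), the fact that a decomposable maximal ideal on a one-dimensional reduced local ring is rather restrictive, and excluding by a multiplicity or embedding-dimension bound all reducible curve singularities other than the $(\mathrm{A}_1)$ one.
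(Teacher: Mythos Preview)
Your sketch for the first assertion is rough but in the right spirit; the decomposition $d_n=d_n^I+d_n^J$ is essentially what underlies \cite[Lemma~3.2]{fiber}, and with more care one can extract that either $I$ or $J$ is a summand of $\syz^3M$, after which taking one more syzygy and swapping $I\leftrightarrow J$ gives the other piece in $\syz^4M$. The paper packages this via Propositions~\ref{3} and~\ref{2}: one shows structurally that $\syz_R^2M\cong\Hom_R(\tr M,I)\oplus\Hom_R(\tr M,J)$, deduces that $I$ or $J$ sits in $\syz^3M$ (Proposition~\ref{2}(2)), and then Proposition~\ref{3}(1) says $J$ is a summand of $\syz_RI$ (and vice versa). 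That route avoids the vague ``nonzero at a minimal generator'' and ``one-step detour'' language in your sketch.

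The second assertion, however, has a genuine gap. You apply part~(1) to $\syz^2M$, observe that $\m$ splits off $\syz^5M\oplus\syz^6M$ only diagonally, and then try to bootstrap this into an eventually $2$-periodic resolution of $M$ so as to invoke Eisenbud. Two things fail here. First, knowing that $I$ is a \emph{summand} of $\syz^5M$ and $J$ of $\syz^6M$ does not tell you what the complementary summands are; your claim that the high syzygies become direct sums of copies of $R/I$ and $R/J$ is unsupported. Second, and more seriously, Eisenbud's theorem goes the other way: over a hypersurface every module has an eventually $2$-periodic resolution, but the existence of a single module $M$ with a periodic resolution does not force $R$ (or $\widehat R$) to be a hypersurface. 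You would need periodicity for $k$, or bounded Betti numbers for $k$, to reach that conclusion, and nothing in your argument produces this.

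The paper avoids this entirely. It argues the contrapositive at the level of the ring, not the module: if $R/J$ is not a discrete valuation ring, then Proposition~\ref{3}(2) gives that $\m$ is already a summand of $\syz^2I$ and of $\syz^3J$; since either $I$ or $J$ sits in $\syz^3M$, one of $\syz^5M$ or $\syz^6M$ contains $\m$. Symmetrically for $R/I$. Hence the only way $\m$ can fail to be a summand of both $\syz^5M$ and $\syz^6M$ is that $R/I$ and $R/J$ are \emph{both} discrete valuation rings. Then $R\cong R/I\times_kR/J$ is a fiber product of two DVRs, and the identification $\widehat R\cong S/(xy)$ follows from known structure results on such fiber products (\cite[Corollary~2.7]{gorfib}, \cite[Theorem~A]{syz}). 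No periodicity argument is needed.
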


\noindent
This theorem is well illustrated by taking into account the fact that if $R=k[\![x,y]\!]/(xy)$ and $M=R/(x)$, then $\m$ is not a direct summand of $\syz^iM$ for all nonnegative integers $i$.

This paper is organized as follows.
In Section 2, we investigate the structure of syzygies over a local ring with decomposable maximal ideal, and show Theorem \ref{41}.
In Section 3, we prove a technical but essential theorem on generation times of syzygies (Theorem \ref{19}).
In Section 4, as another application we find out sufficient conditions for a syzygy of the residue field to be finitely built out of a module of infinite projective dimension, and count the number of necessary extensions (Theorem \ref{9}).
In Section 5, we state the precise definition of a uniformly dominant local ring, and apply the theorem obtained in the previous section to the singularity category to prove Theorem \ref{33}.
In Section 6, we give a proof of Theorem \ref{39} and by using this theorem, we provide various methods to produce uniformly dominant local rings.

We close the section by stating our convention which is adopted throughout the rest of this paper.

\begin{conv}
By $\N$ we denote the set of nonnegative integers $\{0,1,2,\dots\}$.
We assume that all subcategories are nonempty and strictly full, that all rings are commutative and noetherian, and that all modules are finitely generated.
Each object $X$ of a category $\C$ may be identified with the (strictly full) subcategory of $\C$ given by $X$, which consists of those objects of $\C$ which are isomorphic to $X$.
Unless otherwise specified, we let $R$ be a local ring of depth $t$ with maximal ideal $\m$ and residue field $k$.
We denote by $(-)^\ast$ the $R$-dual functor $\Hom_R(-,R)$.
We let $\widehat R$ stand for the ($\m$-adic) completion of $R$.
The projective dimension of an $R$-module $M$ is denoted by $\pd_RM$.
A chain complex $X=(\cdots\to X_i\to X_{i-1}\to\cdots)$ is regarded as the cochain complex whose $(-i)$th component is $X_i$ for each $i\in\ZZ$.
When we say $(X,\partial)$ is a complex, $X$ is a complex with $i$th differential map $\partial^i$ for each $i\in\ZZ$.
Subscripts/superscripts may be omitted if they are clear from the context.
\end{conv}

%%%%%%%%%%%%%%%%%%%%%%%%%%%%%%%%%%%%%%%%
\section{Syzygies over a local ring with decomposable maximal ideal}

In this section, we study syzygies of $R$-modules in the case where the maximal ideal $\m$ is decomposable as an $R$-module.
First of all, let us recall the precise definitions of syzygies, transposes and Betti numbers.

\begin{dfn}
Let $M$ be an $R$-module.
We denote by $\nu_R(M)$ the minimal number of generators of $M$, i.e., $\nu_R(M)=\dim_k(M\otimes_Rk)$.
Let $(F,\partial)$ be a minimal free resolution of $M$.
For each $n>0$ the {\em $n$th syzygy} $\syz_R^nM$ of $M$ is defined as the image of $\partial_n$, and we put $\syz_R^0M=M$.
Set $\beta_n^R(M)=\nu_R(\syz_R^nM)$ and call it the {\em $n$th Betti number} of $M$.
The {\em transpose} $\tr_RM$ of $M$ is defined as the cokernel of the $R$-dual map $\partial_1^\ast$.  
The $n$th syzygy and transpose of $M$ are uniquely determined up to isomorphism, since so is a minimal free resolution of $M$.
\end{dfn}

We investigate the structure of syzygies of direct summands of the maximal ideal $\m$ of the local ring $R$.

\begin{prop}\label{3}
Let $I$ and $J$ be nonzero ideals of $R$ such that $\m=I\oplus J$.
The following statements hold.
\begin{enumerate}[\rm(1)]
\item
The ideal $I$ is a direct summand of $\syz_RJ$.
The ideal $J$ is a direct summand of $\syz_RI$.
\item
If $R/J$ is not a discrete valuation ring, then $\m$ is a direct summand of $\syz_R^2I$ and of $\syz_R^3J$.
\item
If $R/I$ is not a discrete valuation ring, then $\m$ is a direct summand of $\syz_R^2J$ and of $\syz_R^3I$.
\end{enumerate}
\end{prop}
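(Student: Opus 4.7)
My plan hinges on the following splitting lemma. If $N$ is any finitely generated $R$-module annihilated by $J$ with $n=\nu(N)$ minimal generators, then $J^n = JR^n$ is a direct summand of $\syz_R N$. The inclusion $J^n\subseteq\syz_R N$ is automatic from $JN=0$. For the splitting, minimality of a presentation $R^n\twoheadrightarrow N$ forces $\syz_R N\subseteq\m R^n$, and the decomposition $\m=I\oplus J$ supplies an $R$-linear projection $\m\twoheadrightarrow J$; applying it componentwise yields a retraction $\syz_R N\to J^n$. The only delicate point in checking $R$-linearity is that $IJ\subseteq I\cap J=0$ makes the cross terms vanish. The statement with $I$ in place of $J$ holds by symmetry. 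Part (1) is then immediate: applying the lemma to $N=J$ (killed by $I$) exhibits $I$ as a direct summand of $\syz_R J$, and the other assertion follows by swapping roles.

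For parts (2) and (3), which are related by swapping $I$ and $J$, it suffices to treat (2). The additional ingredient is the decomposition
\[
\syz_R I\;\cong\;J^m\oplus K,\qquad m=\nu(I),\quad K:=\syz_{R/J}(\overline{\m}),
\]
where $\overline{\m}=I$ is the maximal ideal of $\overline R:=R/J$ (and $I$ is naturally an $\overline R$-module because $JI=0$). This comes from the short exact sequence $0\to J^m\to\syz_R I\to\syz_{R/J}(I)\to 0$ obtained by reducing a minimal $R$-presentation of $I$ modulo $J$; the sequence splits by the lemma, since $J^m=JR^m$ is a summand of $\syz_R I$. The assumption that $R/J$ is not a discrete valuation ring is exactly what prevents $\overline{\m}$ from being a free $\overline R$-module (the case $R/J=k$ is ruled out by $I\neq 0$), so $K\neq 0$; and since $K$ is killed by $J$, the lemma applies to $K$ as well, giving $J$ as a summand of $\syz_R K$.

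The conclusions follow by chasing direct summands through the syzygy functor, using that summands are preserved by $\syz_R$. For $\syz_R^2 I=(\syz_R J)^m\oplus\syz_R K$: part (1) makes $I$ a summand of $\syz_R J$, and the lemma makes $J$ a summand of $\syz_R K$, so $\m=I\oplus J$ is a summand of $\syz_R^2 I$. For $\syz_R^3 J$: by part (1), $I$ is a summand of $\syz_R J$, so $\syz_R I$ is a summand of $\syz_R^2 J$; since $\syz_R I$ contains $J\oplus K$ as a summand, applying $\syz_R$ once more puts $\syz_R J\oplus\syz_R K$ as a summand of $\syz_R^3 J$, and part (1) together with the lemma then make $I\oplus J=\m$ a summand of this. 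Part (3) is proved identically after swapping $I$ and $J$. The main obstacle I expect is not conceptual but organizational --- correctly tracking which summand propagates through which application of $\syz_R$ at each step.
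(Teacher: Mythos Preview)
Your proof is correct and follows essentially the same route as the paper's. The paper cites \cite[Lemma 3.2]{fiber} for the decomposition $\syz_R N\cong J^{\oplus\nu(N)}\oplus\syz_{R/J}N$ whenever $JN=0$, whereas you prove this splitting directly via the componentwise projection $\m R^n\twoheadrightarrow J^n$; after that, both arguments identify $K=\syz_{R/J}I=\syz_{R/J}(\m/J)$, use that $K\ne0$ precisely when $R/J$ is not a discrete valuation ring, and then reapply the same splitting to $K$ and to $\syz_R J$ to assemble $\m=I\oplus J$ inside $\syz_R^2 I$ and $\syz_R^3 J$. One small remark: $R$-linearity of the projection $\m\to J$ needs only that $I$ and $J$ are ideals (so $rI\subseteq I$, $rJ\subseteq J$); the fact $IJ=0$ is not required there, though it is what makes $I$ an $R/J$-module elsewhere in your argument.
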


\begin{proof}
First of all, note that $I$ and $J$ have the structures of modules over $R/J$ and $R/I$, respectively, since $IJ=0$.
Applying \cite[Lemma 3.2]{fiber}, we observe that there are isomorphisms of $R$-modules:
$$
\begin{array}{l}
\syz_RI\cong J^{\oplus\beta_0^{R/J}(I)}\oplus\syz_{R/J}I,\qquad
\syz_R\syz_{R/J}I\cong J^{\oplus\beta_1^{R/J}(I)}\oplus\syz_{R/J}^2I,\\
\syz_RJ\cong I^{\oplus\beta_0^{R/I}(J)}\oplus\syz_{R/I}J,\qquad
\syz_R\syz_{R/I}J\cong I^{\oplus\beta_1^{R/I}(J)}\oplus\syz_{R/I}^2J.
\end{array}
$$
As $I$ and $J$ are nonzero ideals of $R$, the integers $\beta_0^{R/J}(I)$ and $\beta_0^{R/I}(J)$ are positive.
Hence the ideals $I$ and $J$ are direct summands of the syzygies $\syz_RJ$ and $\syz_RI$, respectively.
Thus assertion (1) follows.
We have
$$
\syz_R^2I
\cong(\syz_RJ)^{\oplus\beta_0^{R/J}(I)}\oplus\syz_R\syz_{R/J}I
\cong I^{\oplus\beta_0^{R/I}(J)\beta_0^{R/J}(I)}\oplus J^{\oplus\beta_1^{R/J}(I)}\oplus (\syz_{R/I}J)^{\oplus\beta_0^{R/J}(I)}\oplus\syz_{R/J}^2I.
$$
Since the ideal $I$ is isomorphic to $\m/J$, we have equalities $\beta_1^{R/J}(I)=\beta_1^{R/J}(\m/J)=\beta_2^{R/J}(k)$.
If $R/J$ is not a discrete valuation ring, then $\beta_2^{R/J}(k)$ is positive (note here that $R/J$ is not a field as $J\ne\m$), and $\m=I\oplus J$ is a direct summand of $\syz_R^2I$.
The module $\syz_R^3J=\syz_R^2(\syz_RJ)$ is isomorphic to $(\syz_R^2I)^{\oplus\beta_0^{R/I}(J)}\oplus\syz_R^2\syz_{R/I}J$, which contains $\syz_R^2I$ as a direct summand.
Hence $\m$ is a direct summand of $\syz_R^3J$ if $R/J$ is not a discrete valuation ring.
Thus assertion (2) follows.
Swapping $I$ and $J$ in assertion (2), we have that assertion (3) holds.
\end{proof}

Here we review some basic properties of a local ring with decomposable maximal ideal.
For the details, we refer the reader to \cite[Lemma 3.1]{O}, \cite[Fact 3.1]{fiber} and \cite[Fact 2.1]{gorfib}.

\begin{lem}\label{6}
\begin{enumerate}[\rm(1)]
\item
The maximal ideal $\m$ of $R$ is decomposable if and only if there exist two local rings $A$ and $B$ with residue field $k$ such that $R$ is isomorphic to the fiber product $A\times_kB$ of $A$ and $B$ over $k$.
\item
Suppose that $\m$ is decomposable, that is to say, that there exist nonzero ideals $I,J$ of $R$ such that $\m=I\oplus J$.
Then, an isomorphism  $R\cong R/I\times_kR/J$ and an equality $\depth R=\min\{\depth R/I,\,\depth R/J,\,1\}$ hold.
An $R$-module has infinite projective dimension if and only if it has projective dimension at least two.
\end{enumerate}
\end{lem}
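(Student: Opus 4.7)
The plan is to prove (1) first and then derive (2) from it together with standard homological facts. For the easy direction $(\Leftarrow)$ of (1), if $R=A\times_k B$, then $\m_R=\m_A\times\m_B$ as a subset of $A\times B$, and this splits as an $R$-module into $(\m_A\times 0)\oplus(0\times\m_B)$. For the converse, given a decomposition $\m=I\oplus J$ with $I,J$ nonzero, the natural candidates are $A=R/J$ and $B=R/I$, which are local with residue field $k$. I would consider the diagonal ring map $\varphi\colon R\to R/J\times R/I$ given by $\varphi(r)=(r+J,r+I)$. Its image lies in the fiber product $R/J\times_k R/I$, injectivity is immediate from $\ker\varphi=I\cap J=0$, and for surjectivity onto the fiber product, given a compatible pair $(r_1+J,r_2+I)$ with $r_1-r_2\in\m$, I would write $r_1-r_2=i+j$ uniquely with $i\in I$, $j\in J$ and verify that $r_1-j=r_2+i$ is a preimage.

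For (2), the isomorphism is precisely the construction above. The embedding $R\hookrightarrow R/I\oplus R/J$ fits into a short exact sequence
\[
0\to R\to R/I\oplus R/J\to k\to 0,
\]
where the cokernel map sends $(r_1+J,r_2+I)$ to $(r_1-r_2)+\m$. Applying $\mathrm{RHom}_R(k,-)$ and tracing when $\Ext_R^i(k,-)$ first becomes nonzero on each term yields the formula $\depth R=\min\{\depth R/I,\depth R/J,1\}$; the upper bound of $1$ reflects that $k$ appears as the cokernel, so the boundary map $k=\Ext^0(k,k)\to\Ext^1(k,R)$ produces a nonzero class once both $\Ext^0(k,R/I)$ and $\Ext^0(k,R/J)$ vanish. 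The projective dimension statement then follows from Auslander--Buchsbaum: the relation $IJ\subseteq I\cap J=0$ shows that $R$ has nonzero zero-divisors and hence is not regular, and combined with $\depth R\le 1$, any module of finite projective dimension satisfies $\pd_R M\le\depth R\le 1$.

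The main obstacle will be the depth formula, which requires carefully running the long exact $\Ext$ sequence from the short exact sequence above and keeping track of the connecting homomorphism responsible for the ``$1$'' ceiling. The rest of the argument consists of direct algebra or a standard invocation of Auslander--Buchsbaum.
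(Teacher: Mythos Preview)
The paper does not actually prove this lemma: it states it as a review and cites \cite[Lemma 3.1]{O}, \cite[Fact 3.1]{fiber}, and \cite[Fact 2.1]{gorfib} for the details. Your proposal therefore supplies a genuine self-contained argument where the paper defers to the literature, and your approach---the diagonal map into $R/I\times R/J$ for part (1), the short exact sequence $0\to R\to R/I\oplus R/J\to k\to 0$ for the depth formula, and Auslander--Buchsbaum for the projective-dimension dichotomy---is the standard one and is correct.

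One point in the depth computation deserves to be made explicit, since your sketch leaves it implicit. In the long exact sequence for $\Hom_R(k,-)$, the map
\[
\Hom_R(k,R/I)\oplus\Hom_R(k,R/J)\longrightarrow\Hom_R(k,k)=k
\]
is \emph{always} zero, not only when both sources vanish: any homomorphism $k\to R/I$ lands in $\soc(R/I)\subseteq\m/I$, and the map $R/I\oplus R/J\to k$ kills $\m/I\oplus\m/J$. This single observation handles both cases at once: it forces $\Hom_R(k,R)\cong\Hom_R(k,R/I)\oplus\Hom_R(k,R/J)$ (so $\depth R=0$ exactly when one of $\depth R/I,\depth R/J$ is zero), and it makes the connecting map $k\hookrightarrow\Ext_R^1(k,R)$ injective unconditionally (so $\depth R\le 1$ always). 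Without this, the case $\min\{\depth R/I,\depth R/J\}=0$ is not fully justified by your sketch, since knowing only that $\Hom_R(k,R)$ embeds in a nonzero module does not by itself force it to be nonzero.
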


In the lemma below we provide several properties of transposes, which will often be used in this paper.

\begin{lem}\label{1}
The following statements hold true for every $R$-module $M$.
\begin{enumerate}[\rm(1)]
\item
Suppose that $M$ has no nonzero free summand.
Then one has an isomorphism $M^\ast\cong\Hom_R(M,\m)$.
If moreover $\Ext_R^1(M,R)=0$, then one also has an isomorphism $\Ext_R^1(M,\m)\cong\Hom_R(M,k)$.
\item
One has $\syz^2M\cong(\tr M)^\ast\cong\Hom(\tr M,\m)$.
If $\Ext^1(\tr M,R)=0$, then $\Ext^1(\tr M,\m)\cong\Hom(\tr M,k)$.
\item
Let $I$ be a proper ideal of $R$.
Then $\tr_RM/I\tr_RM\cong\tr_{R/I}(M/IM)\oplus(R/I)^{\oplus\beta_1^R(M)-\beta_1^{R/I}(M/IM)}$.
\item
One has that $M$ is a free $R$-module if and only if the equality $\tr M=0$ holds.
\end{enumerate}
\end{lem}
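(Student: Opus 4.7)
The plan is to work throughout with a minimal free presentation $F_1 \xrightarrow{\partial_1} F_0 \to M \to 0$, in which (by minimality) the matrix of $\partial_1$ has entries in $\m$, and to feed its dual and tensor constructions into each statement.

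For (1), I would note that any $f \colon M \to R$ with image not contained in $\m$ would be a split surjection (pick a preimage of a unit in the image), producing a nonzero free summand of $M$ and contradicting the hypothesis; hence $M^* = \Hom(M,\m)$. Applying $\Hom(M,-)$ to the short exact sequence $0 \to \m \to R \to k \to 0$ then yields a long exact sequence in which the map $\Hom(M,\m) \to \Hom(M,R)$ is precisely the isomorphism just established; consequently the connecting map $\Hom(M,k) \to \Ext^1(M,\m)$ is injective, and the vanishing $\Ext^1(M,R) = 0$ forces it to also be surjective.

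For (2), I would dualize $F_1 \to F_0 \to M \to 0$ to obtain $0 \to M^* \to F_0^* \xrightarrow{\partial_1^*} F_1^* \to \tr M \to 0$ and identify $(\tr M)^* = \ker(\partial_1^{**})$ with $\syz^2 M = \ker \partial_1$ via the canonical isomorphisms $F_i \cong F_i^{**}$. A split surjection $\tr M \to R$ would, via $F_1^{**} = F_1$, correspond to an element $v \in F_1 \setminus \m F_1$ annihilated by $\partial_1$, contradicting minimality of $F_1 \to \syz M$; hence $\tr M$ has no nonzero free summand, and applying (1) to $\tr M$ delivers $\syz^2 M \cong \Hom(\tr M,\m)$ and, under the extra hypothesis, $\Ext^1(\tr M,\m) \cong \Hom(\tr M,k)$. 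For (4), one direction is immediate since a free module admits $F_1 = 0$ in its minimal resolution. Conversely, $\tr M = 0$ makes $\partial_1^*$ surjective, so for each basis vector $f_i$ of $F_1$ there exists $g_i \in F_0^*$ with $g_i(\partial_1(f_i)) = 1$; this forces $\partial_1(f_i) \notin \m F_0$, which combined with minimality forces $F_1 = 0$, so $M \cong F_0$ is free.

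For (3), I would tensor the presentation with $R/I$ to obtain $F_1/IF_1 \xrightarrow{\bar\partial_1} F_0/IF_0 \to M/IM \to 0$; the cover $F_0/IF_0 \to M/IM$ is still minimal, but $\bar\partial_1$ need not be. Using Nakayama over $R/I$, I would select $\beta_1^{R/I}(M/IM)$ basis elements of $F_1/IF_1$ whose images minimally generate $\im \bar\partial_1$, and perform a change of basis that absorbs the remaining $c := \beta_1^R(M) - \beta_1^{R/I}(M/IM)$ basis vectors into $\ker \bar\partial_1$. This yields a direct sum decomposition $\bar\partial_1 = \partial' \oplus 0$ of $R/I$-linear maps, in which $\partial'$ is a minimal $R/I$-presentation of $M/IM$. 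Taking the $R/I$-dual produces $\cok(\bar\partial_1^\vee) \cong \tr_{R/I}(M/IM) \oplus (R/I)^c$, while the left-hand side equals $\tr_R M / I \tr_R M$ by right-exactness of $-\otimes R/I$ applied to $F_0^* \to F_1^* \to \tr_R M \to 0$. The main obstacle will be this bookkeeping, verifying that the change of basis genuinely produces the $(R/I)^c$ summand with zero differential.
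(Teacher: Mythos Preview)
Your proof is correct and follows essentially the same route as the paper's; the only differences are that you supply direct arguments where the paper cites external references (that $\tr M$ has no nonzero free summand, and the splitting of the non-minimal presentation of $M/IM$ into a minimal piece plus a trivial piece), and your argument for (4) extracts the contradiction with minimality pointwise on basis vectors rather than via the containment $\im(\partial_1^*)\subseteq\m F_1^*$ and Nakayama. These are cosmetic variations on the same ideas.
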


\begin{proof}
Let $(F,f)=(\cdots\xrightarrow{f_4}F_3\xrightarrow{f_3}F_2\xrightarrow{f_2}F_1\xrightarrow{f_1}F_0\to0)$ be a minimal free resolution of $M$.

(1) Applying the functor $\Hom(M,-)$ to the natural exact sequence $0\to\m\xrightarrow{a}R\to k\to0$, we get an exact sequence $0\to\Hom(M,\m)\xrightarrow{b}M^\ast\xrightarrow{c}\Hom(M,k)\xrightarrow{d}\Ext^1(M,\m)\to\Ext^1(M,R)$.
The assumption that $M$ has no nonzero free summand guarantees that every homomorphism $M\to R$ factors through $a$.
Hence $b$ is an isomorphism, and therefore $c$ is a zero map.
If $\Ext^1(M,R)=0$, then the map $d$ is an isomorphism.

(2) Taking $(-)^\ast$ of the exact sequence $F_0^\ast\xrightarrow{f_1^\ast}F_1^\ast\to\tr M\to0$, we get an exact sequence $0\to(\tr M)^\ast\to F_1\xrightarrow{f_1}F_0\to M\to0$, so $(\tr M)^\ast\cong\syz^2M$.
By \cite[Lemma 4.2]{arg}, $\tr M$ has no nonzero free summand.
Assertion (1) implies that $(\tr M)^\ast\cong\Hom(\tr M,\m)$, and that $\Ext^1(\tr M,\m)\cong\Hom(\tr M,k)$ if $\Ext^1(\tr M,R)=0$.

(3) There is a free resolution $(G,g)$ of $M/IM$ over $R/I$ such that $g_1=f_1\otimes_RR/I$.
Let $(H,h)$ be a minimal free resolution of $M/IM$ over $R/I$.
There exist a split-exact $R/I$-complex $(E,e)$ and an isomorphism $G\cong H\oplus E$ of $R/I$-complexes; see \cite[Proposition 1.1.2]{A}.
An isomorphism $\Hom_{R/I}(G,R/I)\cong\Hom_{R/I}(H,R/I)\oplus\Hom_{R/I}(E,R/I)$ of $R/I$-modules is induced.
We obtain isomorphisms of $R/I$-modules:
$$
\begin{array}{l}
\tr_RM/I\tr_RM
\cong\cok(f_1^\ast\otimes_RR/I)
\cong\cok\Hom_{R/I}(g_1,R/I)\\
\phantom{\tr_RM/I\tr_RM\cong\cok(f_1^\ast\otimes_RR/I)}
\cong\cok\Hom_{R/I}(h_1,R/I)\oplus\cok\Hom_{R/I}(e_1,R/I).
\end{array}
$$
The module $\cok\Hom_{R/I}(h_1,R/I)$ is isomorphic to $\tr_{R/I}(M/IM)$.
Since $G_0,H_0$ are free $R/I$-modules of the same rank, we have $E_0=0$.
Hence $e_1$ is a map from $E_1$ to the zero module.
We see that $\cok\Hom_{R/I}(e_1,R/I)$ is isomorphic to $\Hom_{R/I}(E_1,R/I)$.
There are isomorphisms $\Hom_{R/I}(G_1,R/I)\cong\Hom_{R/I}(F_1\otimes_RR/I,R/I)\cong(R/I)^{\oplus\beta_1^R(M)}$ and $\Hom_{R/I}(H_1,R/I)\cong(R/I)^{\oplus\beta_1^{R/I}(M/IM)}$.
Therefore, the $R/I$-module $\Hom_{R/I}(E_1,R/I)$ is isomorphic to the free $R/I$-module $(R/I)^{\oplus\beta_1^R(M)-\beta_1^{R/I}(M/IM)}$.
We get an isomorphism as in the assertion.

(4) The exact sequence $F_0^\ast\xrightarrow{f_1^\ast}F_1^\ast\to\tr M\to0$ implies that if $M$ is free, then $F_1=0$, and $\tr M=0$.
Conversely, assume $\tr M$ is zero.
Then the map $f_1^\ast$ is surjective.
Since $\im f_1$ is contained in $\m F_0$, the module $F_1^\ast=\im(f_1^\ast)$ is contained in $\m F_1^\ast$.
By Nakayama's lemma, $F_1^\ast$ is zero, and so is $F_1$.
Therefore $M$ is free.
\end{proof}

Now we study the structure of syzygies of modules over a local ring with decomposable maximal ideal.

\begin{prop}\label{2}
Let $I$ and $J$ be nonzero ideals of $R$ such that $\m=I\oplus J$.
Let $M$ be an $R$-module.
\begin{enumerate}[\rm(1)]
\item
There are isomorphisms $\syz_R^2M\cong\Hom_R(\tr M,I)\oplus\Hom_R(\tr M,J)$, and
$$
\begin{array}{l}
\Hom_R(\tr M,I)\cong\syz_{R/J}^2(M/JM)\oplus I^{\oplus\beta_1^R(M)-\beta_1^{R/J}(M/JM)},\\
\Hom_R(\tr M,J)\cong\syz_{R/I}^2(M/IM)\oplus J^{\oplus\beta_1^R(M)-\beta_1^{R/I}(M/IM)}.
\end{array}
$$
In particular, there is an isomorphism
$$
\syz_R^2M\cong\syz_{R/I}^2(M/IM)\oplus\syz_{R/J}^2(M/JM)\oplus I^{\oplus\beta_1^R(M)-\beta_1^{R/J}(M/JM)}\oplus J^{\oplus\beta_1^R(M)-\beta_1^{R/I}(M/IM)}.
$$
\item
If $M$ has projective dimension at least two over $R$, then either $I$ or $J$ is a direct summand of $\syz_R^3M$.
\end{enumerate}
\end{prop}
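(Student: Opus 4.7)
The plan for part (1) is to apply Lemma \ref{1}(2) to get $\syz_R^2 M \cong \Hom_R(\tr_R M, \m)$ and split along $\m = I \oplus J$ into $\Hom_R(\tr_R M, I) \oplus \Hom_R(\tr_R M, J)$. Each factor simplifies because $IJ \subseteq I \cap J = 0$: the ideal $I$ is annihilated by $J$, so every $R$-linear map $\tr_R M \to I$ factors through $\tr_R M / J \tr_R M$, giving $\Hom_R(\tr_R M, I) \cong \Hom_{R/J}(\tr_R M / J \tr_R M, I)$. I would then apply Lemma \ref{1}(3) with ideal $J$ to decompose $\tr_R M / J \tr_R M$ as $\tr_{R/J}(M/JM) \oplus (R/J)^{\oplus \beta_1^R(M) - \beta_1^{R/J}(M/JM)}$; the free piece contributes $I^{\oplus \beta_1^R(M) - \beta_1^{R/J}(M/JM)}$. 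Since $I \cong \m/J$ is the maximal ideal of $R/J$, Lemma \ref{1}(2) applied over $R/J$ identifies $\Hom_{R/J}(\tr_{R/J}(M/JM), I)$ with $\syz_{R/J}^2(M/JM)$, giving the stated formula for $\Hom_R(\tr_R M, I)$. The symmetric treatment with $I$ and $J$ swapped handles $\Hom_R(\tr_R M, J)$, and assembling everything yields the decomposition of $\syz_R^2 M$.

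For part (2), the plan is to take the first $R$-syzygy of the decomposition of $\syz_R^2 M$ obtained in part (1). Each of the four summands $\syz_{R/I}^2(M/IM)$, $\syz_{R/J}^2(M/JM)$, $I$, $J$ has no nonzero $R$-free direct summand (the first two because they are annihilated by a nonzero ideal of $R$; the last two because $I \oplus J = \m$), so $\syz_R$ distributes over this direct sum. The pieces $I^{\oplus a}$ and $J^{\oplus b}$ give rise via Proposition \ref{3}(1) to $\syz_R$-summands containing $J$ and $I$ respectively. For the pieces $\syz_{R/I}^2(M/IM)$ and $\syz_{R/J}^2(M/JM)$, I would use the formula $\syz_R N \cong I^{\oplus \beta_0^{R/I}(N)} \oplus \syz_{R/I} N$ for a nonzero $R/I$-module $N$ (and its $J$-counterpart), which is \cite[Lemma 3.2]{fiber} and is already invoked in the proof of Proposition \ref{3}; this shows that whenever $\syz_{R/I}^2(M/IM) \ne 0$ (resp.\ $\syz_{R/J}^2(M/JM) \ne 0$), the syzygy contains $I$ (resp.\ $J$) as a summand. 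Since $\pd_R M \ge 2$ forces $\syz_R^2 M \ne 0$, at least one of the four summands is nonzero, and a short case analysis delivers $I$ or $J$ as a direct summand of $\syz_R^3 M$ in every case.

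The main obstacle I anticipate is the careful bookkeeping in part (1): tracking the ring over which each $\Hom$ is formed and verifying that the hypothesis of Lemma \ref{1}(2) (namely, that the relevant transpose has no nonzero free summand) propagates correctly when the lemma is re-applied over $R/J$ to $\tr_{R/J}(M/JM)$. Part (2) is then largely formal once (1) is in hand, the only subtlety being the verification that no summand appearing in (1) carries a hidden $R$-free direct summand, so that $\syz_R$ may be computed termwise.
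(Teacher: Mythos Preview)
Your proposal is correct and follows essentially the same route as the paper's proof: Lemma~\ref{1}(2), the $\Hom$-reduction to $R/J$, Lemma~\ref{1}(3), and Lemma~\ref{1}(2) again over $R/J$ for part (1); for part (2) the paper likewise distributes $\syz_R$ over the four summands, invokes \cite[Lemma~3.2]{fiber}, and argues (by contraposition on the combined exponents of $I$ and $J$) that they cannot all vanish when $\pd_R M\ge 2$. Your anticipated obstacles are non-issues: Lemma~\ref{1}(2) carries no hypothesis on the transpose (the absence of a free summand is established inside its proof, not assumed), and $\syz_R$ always distributes over finite direct sums since a minimal free resolution of a direct sum is the direct sum of minimal free resolutions.
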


\begin{proof}
(1) Lemma \ref{1}(2) yields $\syz^2M\cong\Hom_R(\tr M,\m)\cong\Hom_R(\tr M,I)\oplus\Hom_R(\tr M,J)$.
As $I$ is an $R/J$-module, $\Hom_R(\tr M,I)$ is isomorphic to $\Hom_{R/J}(\tr M/J\tr M,I)$.
By Lemma \ref{1}(3), we get isomorphisms
$$
\begin{array}{l}
\Hom_{R/J}(\tr M/J\tr M,I)
\cong\Hom_{R/J}(\tr_{R/J}(M/JM)\oplus(R/J)^{\oplus\beta_1^R(M)-\beta_1^{R/J}(M/JM)},I)\\
\phantom{\Hom_{R/J}(\tr M/J\tr M,I)}
\cong\Hom_{R/J}(\tr_{R/J}(M/JM),I)\oplus I^{\oplus\beta_1^R(M)-\beta_1^{R/J}(M/JM)}.
\end{array}
$$
As $I\cong\m/J$, the module $\Hom_{R/J}(\tr_{R/J}(M/JM),I)$ is isomorphic to $\Hom_{R/J}(\tr_{R/J}(M/JM),\m/J)$, which is isomorphic to $\syz_{R/J}^2(M/JM)$ by Lemma \ref{1}(2).
We get an isomorphism $\Hom_R(\tr M,I)\cong\syz_{R/J}^2(M/JM)\oplus I^{\oplus\beta_1^R(M)-\beta_1^{R/J}(M/JM)}$.
Swapping $I$ and $J$, we have $\Hom_R(\tr M,J)\cong\syz_{R/I}^2(M/IM)\oplus J^{\oplus\beta_1^R(M)-\beta_1^{R/I}(M/IM)}$.

(2) It is seen from assertion (1) that there exist isomorphisms of $R$-modules:
$$
\begin{array}{l}
\syz_R^3M\cong\syz_R(\syz_R^2M)\cong\syz_R\syz_{R/I}^2(M/IM)\oplus\syz_R\syz_{R/J}^2(M/JM)\\
\phantom{\syz_R^3M\cong\syz_R(\syz_R^2M)\cong\syz_R\syz_{R/I}^2(M/IM)}
\oplus (\syz_RI)^{\oplus\beta_1^R(M)-\beta_1^{R/J}(M/JM)}\oplus (\syz_RJ)^{\oplus\beta_1^R(M)-\beta_1^{R/I}(M/IM)}.
\end{array}
$$
Making use of \cite[Lemma 3.2]{fiber}, we observe that there are isomorphisms of $R$-modules:
$$
\begin{array}{l}
\syz_R\syz_{R/I}^2(M/IM)\cong I^{\oplus\beta_2^{R/I}(M/IM)}\oplus\syz_{R/I}^3(M/IM),\qquad
\syz_RI\cong J^{\oplus\beta_0^R(I)}\oplus\syz_{R/J}I,\\
\syz_R\syz_{R/J}^2(M/JM)\cong J^{\oplus\beta_2^{R/J}(M/JM)}\oplus\syz_{R/J}^3(M/JM),\qquad
\syz_RJ\cong I^{\oplus\beta_0^R(J)}\oplus\syz_{R/I}J.
\end{array}
$$
Hence $I^{\oplus a}\oplus J^{\oplus b}$ is a direct summand of $\syz^3M$, where $a=\beta_2^{R/I}(M/IM)+\beta_0^R(J)(\beta_1^R(M)-\beta_1^{R/I}(M/IM))$ and $b=\beta_2^{R/J}(M/JM)+\beta_0^R(I)(\beta_1^R(M)-\beta_1^{R/J}(M/JM))$.
Suppose that $a=b=0$.
Then $\beta_2^{R/I}(M/IM)=\beta_2^{R/J}(M/JM)=0$ and $\beta_1^{R/I}(M/IM)=\beta_1^{R/J}(M/JM)=\beta_1^R(M)$, since $\beta_0^R(I),\beta_0^R(J)$ are positive.
It follows from (1) that $\syz_R^2M=0$, which means $\pd_RM<2$.
The assertion follows by taking the contraposition.
\end{proof}

Applying the above proposition to the residue field, we get a remarkable property of its second syzygy.

\begin{cor}\label{7}
If the maximal ideal $\m$ of $R$ is decomposable, then $\m$ is a direct summand of $\syz_R^2k$.
\end{cor}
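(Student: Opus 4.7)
The plan is to apply Proposition \ref{2}(1) directly with $M=k$, and then unwind the Betti-number exponents that appear in that decomposition. Since $\m$ is decomposable, I may write $\m=I\oplus J$ with $I,J$ nonzero ideals, and the proposition will give an explicit description of $\syz_R^2 k$ from which $I\oplus J$ can be read off as a summand.

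First I would note that $I k = J k = 0$ because $I,J\subseteq\m$, so $k/Ik\cong k\cong k/Jk$. Then I would compute the relevant first Betti numbers. Over $R$, $\beta_1^R(k)=\edim R=\nu(\m)=\nu(I)+\nu(J)$, using $\m=I\oplus J$. Over $R/J$, the maximal ideal is $\m/J=(I+J)/J\cong I$ as $R$-modules; since $J$ annihilates $I$, $\nu_{R/J}(I)=\nu_R(I)=\nu(I)$, so $\beta_1^{R/J}(k)=\nu(I)$. Symmetrically $\beta_1^{R/I}(k)=\nu(J)$. Hence
\[
\beta_1^R(k)-\beta_1^{R/J}(k)=\nu(J),\qquad \beta_1^R(k)-\beta_1^{R/I}(k)=\nu(I).
\]

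Feeding these into Proposition \ref{2}(1) yields
\[
\syz_R^2 k\;\cong\;\syz_{R/I}^2 k\,\oplus\,\syz_{R/J}^2 k\,\oplus\,I^{\oplus\nu(J)}\,\oplus\,J^{\oplus\nu(I)}.
\]
Since $I$ and $J$ are nonzero, $\nu(I),\nu(J)\ge 1$, so $I^{\oplus\nu(J)}$ contains $I$ as a summand and $J^{\oplus\nu(I)}$ contains $J$ as a summand. Therefore $\m=I\oplus J$ is a direct summand of $\syz_R^2 k$.

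There is essentially no obstacle: the work has already been done in Proposition \ref{2}(1); the only thing to verify is the identification of the first Betti numbers of $k$ over $R$, $R/I$, $R/J$, which is a bookkeeping computation using $\m=I\oplus J$, $\m/J\cong I$, and the standard fact $\nu_{R/J}(N)=\nu_R(N)$ whenever $JN=0$. Everything else is immediate.
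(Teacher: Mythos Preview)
Your proof is correct and follows essentially the same approach as the paper's: both apply Proposition \ref{2}(1) with $M=k$, compute the exponents $\beta_1^R(k)-\beta_1^{R/J}(k)=\nu(J)$ and $\beta_1^R(k)-\beta_1^{R/I}(k)=\nu(I)$ via $\m=I\oplus J$ and $\m/J\cong I$, and conclude that $\m=I\oplus J$ is a summand since both exponents are positive. The only difference is that you write out the full decomposition of $\syz_R^2k$ including the terms $\syz_{R/I}^2k$ and $\syz_{R/J}^2k$, whereas the paper simply notes that $I^{\oplus a}\oplus J^{\oplus b}$ is a summand.
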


\begin{proof}
Take ideals $I,J\ne0$ with $\m=I\oplus J$.
Proposition \ref{2}(1) implies that $I^{\oplus a}\oplus J^{\oplus b}$ is a direct summand of $\syz^2k$, where $a=\beta_1^R(k)-\beta_1^{R/J}(k)$ and $b=\beta_1^R(k)-\beta_1^{R/I}(k)$.
Since $\m=I\oplus J$ and $I\cong\m/J$, we have that $a=\nu(\m)-\nu(\m/J)=\nu(\m)-\nu(I)=\nu(J)>0$, and similarly $b>0$.
Hence $\m$ is a direct summand of $\syz^2k$.
\end{proof}

It is natural to ask whether the converse of Corollary \ref{7} holds.
The example below says that it is negative.

\begin{ex}\label{44}
Let $k$ be a field, and let $R=k[x,y]/(x^3,x^2y,y^2)$.
Then the maximal ideal $\m=(x,y)$ of the artinian local ring $R$ is not decomposable, but $\syz^1k=\m$ is a direct summand of $\syz^2k$.
In fact, define a map $F:\m\to R^2$ by $F(ax+by)=\binom{-ay}{ax+by}$ for $a,b\in R$.
We have $(x,y)\binom{-ay}{ax+by}=-axy+axy+by^2=0$, so that $F(ax+by)$ is in the kernel of the map $(x,y):R^{\oplus2}\to R$, which is equal to $\syz^2k$.
Hence $F$ factors through the inclusion map $p:\syz^2k\hookrightarrow R^{\oplus2}$ and induces a map $f:\m\to\syz^2k$.
Let $G$ be the composite map of $p$ with the projection $(0,1):R^{\oplus2}\twoheadrightarrow R$.
As $p$ factors via the inclusion map $q:\m^{\oplus2}\hookrightarrow R^{\oplus2}$, the map $G$ factors via the inclusion map $r:\m\hookrightarrow R$ and induces a map $g:\syz^2k\to\m$.
We have that $gf(ax+by)=g(\binom{-ay}{ax+by})=ax+by$, which means that $gf$ is the identity map.
Consequently, $f$ is a split monomorphism, and the claim follows.
\end{ex}

In the corollary below, we obtain further information about modules over a local ring with decomposable maximal ideal.
The second assertion of the corollary gives rise to a sufficient condition for the maximal ideal to be a direct summand of the third syzygy of each nonfree module.

\begin{cor}
Let $I$ and $J$ be nonzero ideals of $R$ such that $\m=I\oplus J$.
Let $M$ be an $R$-module.
\begin{enumerate}[\rm(1)]
\item
If $\pd_RM$ is at most one, then so do $\pd_{R/I}(M/IM)$ and $\pd_{R/J}(M/JM)$.
\item
Assume $M$ is not free.
If $\depth R/I=0$, then $I$ is a direct summand of $\syz_R^3M$.
If $\depth R/J=0$, then $J$ is a direct summand of $\syz_R^3M$.
If $\depth R/I=\depth R/J=0$, then $\m$ is a direct summand of $\syz_R^3M$.
\end{enumerate}
\end{cor}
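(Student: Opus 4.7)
The plan is to read both parts off the explicit direct sum decompositions already worked out in Proposition \ref{2}; the whole argument amounts to tracking which summands the two hypotheses force to survive or to die.

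For assertion (1), I would assume $\pd_R M\le 1$, so that $\syz_R^2M=0$, and apply the second displayed isomorphism in Proposition \ref{2}(1). Since $I$ and $J$ are nonzero, the vanishing of $\syz_R^2M$ forces each of the four direct summands on the right-hand side to be zero. In particular $\syz_{R/I}^2(M/IM)=0$ and $\syz_{R/J}^2(M/JM)=0$, which are the desired bounds $\pd_{R/I}(M/IM)\le 1$ and $\pd_{R/J}(M/JM)\le 1$.

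For assertion (2), I would revisit the computation inside the proof of Proposition \ref{2}(2). Combining Proposition \ref{2}(1) with the syzygy identities
$$\syz_R\syz_{R/I}^2(M/IM)\cong I^{\oplus\beta_2^{R/I}(M/IM)}\oplus\syz_{R/I}^3(M/IM)\quad\text{and}\quad\syz_RJ\cong I^{\oplus\beta_0^R(J)}\oplus\syz_{R/I}J$$
(and their counterparts with $I$ and $J$ swapped), one exhibits $I^{\oplus c}$ as a direct summand of $\syz_R^3M$, where $c=\beta_2^{R/I}(M/IM)+\beta_0^R(J)\bigl(\beta_1^R(M)-\beta_1^{R/I}(M/IM)\bigr)$, and $J^{\oplus c'}$ as a disjoint direct summand with $c'$ obtained symmetrically. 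Now assume $\depth R/I=0$ and, for contradiction, that $c=0$. Since $\beta_0^R(J)>0$, this forces $\beta_2^{R/I}(M/IM)=0$, hence $\pd_{R/I}(M/IM)\le 1$; then Auslander--Buchsbaum over $R/I$ gives $\pd_{R/I}(M/IM)=0$, so $M/IM$ is $R/I$-free and $\beta_1^{R/I}(M/IM)=0$. Substituting back into $c=0$ yields $\beta_1^R(M)=0$, so $M$ is $R$-free, contradicting the hypothesis. The $\depth R/J=0$ case is symmetric, and when both depths vanish the two pieces $I^{\oplus c}$ and $J^{\oplus c'}$ live in disjoint parts of the decomposition, so $\m=I\oplus J$ appears as a direct summand of $\syz_R^3 M$.

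The step I expect to be most delicate is not the Auslander--Buchsbaum trick but the bookkeeping: one must rederive the fine decomposition of $\syz_R^3M$ from inside the proof of Proposition \ref{2}(2) (the statement only records that certain multiplicities are nonzero) in order to pinpoint \emph{which} explicit summand is $I$ versus $J$, and to be certain that the two extractions do not compete for the same copy of the module.
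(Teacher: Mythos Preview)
Your proposal is correct and follows essentially the same route as the paper. The only organizational difference is that the paper keeps the two halves $\syz_R\Hom_R(\tr M,I)$ and $\syz_R\Hom_R(\tr M,J)$ of $\syz_R^3M$ separate throughout---so the disjointness of the $I$- and $J$-summands that you flagged as delicate becomes automatic---and argues by a case split on whether $\syz_{R/I}^2(M/IM)$ vanishes rather than by contradiction on the total multiplicity $c$; the Auslander--Buchsbaum step is the same.
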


\begin{proof}
(1) Since $\pd_RM$ is at most one, we have $\syz_R^2M=0$ and hence $\syz_{R/I}^2(M/IM)=\syz_{R/J}^2(M/JM)=0$ by Proposition \ref{2}(1).
This means that both $\pd_{R/I}(M/IM)$ and $\pd_{R/J}(M/JM)$ are at most one.

(2) In view of Propopsition \ref{2}(1), the syzygy $\syz_R^2M$ is isomorphic to the direct sum of $\Hom_R(\tr M,I)$ and $\Hom_R(\tr M,J)$, while these are isomorphic to $\syz_{R/J}^2(M/JM)\oplus I^{\oplus\beta_1^R(M)-\beta_1^{R/J}(M/JM)}$ and $\syz_{R/I}^2(M/IM)\oplus J^{\oplus\beta_1^R(M)-\beta_1^{R/I}(M/IM)}$, respectively.
Lemma \ref{1}(4) implies that $\tr M$ is nonzero.
We make three steps.

(i) We consider the case where $R/I$ has depth zero.
Then so does $\m/I$, which is isomorphic to $J$.
The module $\Hom_R(\tr M,J)$ is nonzero; see \cite[Proposition 1.2.3(b)]{BH}.
If $\syz_{R/I}^2(M/IM)$ is zero, then $\beta_1^R(M)-\beta_1^{R/I}(M/IM)=\beta_1^R(M)>0$, and $J$ is a direct summand of $\Hom_R(\tr M,J)$, whence $I$ is a direct summand of $\syz_R\Hom_R(\tr M,J)$ by Proposition \ref{3}(1).
If $\syz_{R/I}^2(M/IM)$ is nonzero, then $\syz_R\syz_{R/I}^2(M/IM)$ is isomorphic to $I^{\oplus\beta_2^{R/I}(M/IM)}\oplus\syz_{R/I}^3(M/IM)$ by \cite[Lemma 3.2]{fiber} and $\beta_2^{R/I}(M/IM)>0$, whence $I$ is a direct summand of $\syz_R\Hom_R(\tr M,J)$.
Since the syzygy $\syz_R\Hom_R(\tr M,J)$ is a direct summand of $\syz_R^3M$, so is the ideal $I$.

(ii) Swapping $I$ and $J$ in the argument (i), we observe that in the case where the local ring $R/J$ has depth zero, the ideal $J$ of $R$ is a direct summand of both of the syzygies $\syz_R\Hom_R(\tr M,I)$ and $\syz_R^3M$.

(iii) By (i) and (ii), we conclude that if both $R/I$ and $R/J$ have depth zero, then $\m=I\oplus J$ is a direct summand of $\syz_R\Hom_R(\tr M,J)\oplus\syz_R\Hom_R(\tr M,I)$, which is isomorphic to the third syzygy $\syz_R^3M$.
\end{proof}

Let us state the main result of this section.
The theorem below improves \cite[Theorem A]{fiber}, which is one of the two main results of \cite{fiber}.
The second assertion of the theorem says if the maximal ideal is decomposable, except a certain particular case, it is a direct summand of either the fifth or sixth syzygy of a given module.

\begin{thm}\label{36}
Suppose that the maximal ideal $\m$ of the local ring $R$ is decomposable as an $R$-module.
Let $M$ be an $R$-module such that $\pd_RM\ge2$.
Then one has that $\pd_RM=\infty$, and the following statements hold.
\begin{enumerate}[\rm(1)]
\item
The maximal ideal $\m$ is a direct summand of $\syz_R^3M\oplus\syz_R^4M$.
\item
One of the following three statements holds.
\begin{enumerate}[\rm(a)]
\item
The maximal ideal $\m$ is a direct summand of $\syz_R^5M$.
\item
The maximal ideal $\m$ is a direct summand of $\syz_R^6M$.
\item
The completion of $R$ is an $(\mathrm{A}_1)$-singularity of dimension one, that is, there exist a complete regular local ring $S$ of dimension two and a regular system of parameters $x,y$ of $S$ such that $\widehat R\cong S/(xy)$.
\end{enumerate}
\end{enumerate}
\end{thm}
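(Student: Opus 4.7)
The plan is to derive everything from Proposition \ref{2}(2), which forces at least one of $I$ or $J$ to appear as a direct summand of $\syz_R^3M$ once $\pd_RM\ge 2$, and then to bootstrap by the three parts of Proposition \ref{3}. The infinite projective dimension of $M$ is immediate from Lemma \ref{6}(2). Write $\m=I\oplus J$ with $I,J$ nonzero; since the roles of $I$ and $J$ are interchangeable throughout, I will assume without loss of generality that $I\mid\syz_R^3M$.

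For part (1), because syzygies in a minimal free resolution respect direct summands, $I\mid\syz_R^3M$ promotes to $\syz_RI\mid\syz_R^4M$. Proposition \ref{3}(1) gives $J\mid\syz_RI$, hence $J\mid\syz_R^4M$. Therefore $\m=I\oplus J$ is a direct summand of $\syz_R^3M\oplus\syz_R^4M$.

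For part (2), continue with $I\mid\syz_R^3M$, so that $\syz_R^2I\mid\syz_R^5M$ and $\syz_R^3I\mid\syz_R^6M$. If $R/J$ is not a discrete valuation ring, Proposition \ref{3}(2) yields $\m\mid\syz_R^2I$, hence $\m\mid\syz_R^5M$, which is case (a). If instead $R/I$ is not a discrete valuation ring, Proposition \ref{3}(3) yields $\m\mid\syz_R^3I$, hence $\m\mid\syz_R^6M$, which is case (b). The parallel setup $J\mid\syz_R^3M$ is dealt with symmetrically: the non-DVR hypothesis on $R/I$ lands in $\syz_R^5M$ via $\syz_R^2J$, and the non-DVR hypothesis on $R/J$ lands in $\syz_R^6M$ via $\syz_R^3J$. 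What remains is the case in which both $R/I$ and $R/J$ are discrete valuation rings, which I will identify with situation (c).

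In that remaining case, Lemma \ref{6}(1) supplies $R\cong R/I\times_kR/J$. Passing to completions, the decomposition $\m=I\oplus J$ yields $\m\widehat R=I\widehat R\oplus J\widehat R$, and the natural isomorphisms $\widehat R/I\widehat R\cong\widehat{R/I}$ and $\widehat R/J\widehat R\cong\widehat{R/J}$ together with Lemma \ref{6}(1) applied now to $\widehat R$ give $\widehat R\cong\widehat{R/I}\times_k\widehat{R/J}$. Under the DVR hypothesis on each factor, the completions become formal power series rings $k[[x]]$ and $k[[y]]$, and a direct computation identifies $k[[x]]\times_kk[[y]]$ with $k[[x,y]]/(xy)$, the $(\mathrm{A}_1)$-singularity of dimension one predicted in (c). The step I expect to require the most care is the verification that the fiber-product presentation of $R$ passes to $\widehat R$ and that this fiber product of two copies of $k[[-]]$ is indeed $k[[x,y]]/(xy)$; the rest of the argument is bookkeeping on how the summand $I$ (or $J$) of $\syz_R^3M$ propagates through successive syzygies via Proposition \ref{3}.
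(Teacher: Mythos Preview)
Your argument for part (1) and for cases (a) and (b) of part (2) is correct and is essentially the paper's proof, reorganized: the paper branches first on which of $R/I$, $R/J$ fails to be a DVR and then on which of $I$, $J$ sits inside $\syz_R^3M$, while you branch in the opposite order. Either way one lands in $\syz_R^5M$ or $\syz_R^6M$ via Proposition~\ref{3}(2)(3).

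The gap is in case (c). Your claim that the completions of the DVRs $R/I$ and $R/J$ ``become formal power series rings $k[[x]]$ and $k[[y]]$'' is only valid in equal characteristic, where Cohen's structure theorem provides a coefficient field. In mixed characteristic a complete DVR with residue field $k$ need not be $k[[t]]$; it is instead built from a Cohen ring, and the fiber product $\widehat{R/I}\times_k\widehat{R/J}$ is not obtained by your direct computation. The statement of (c) allows $S$ to be an arbitrary complete regular local ring of dimension two precisely to accommodate this. The paper does not attempt a direct identification here; it invokes \cite[Corollary~2.7]{gorfib} and \cite[Theorem~A]{syz}, which handle the general (possibly mixed-characteristic) case. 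Your passage from $R$ to $\widehat R$ and the resulting fiber-product description $\widehat R\cong\widehat{R/I}\times_k\widehat{R/J}$ are fine; it is the final identification with a hypersurface $S/(xy)$ that needs those cited results rather than the equicharacteristic computation you sketch.
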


\begin{proof}
By assumption, there exist nonzero ideals $I$ and $J$ of $R$ such that the equality $\m=I\oplus J$ holds.
%It follows from Lemma \ref{6}(2) that the $R$-module $M$ has infinite projective dimension.

(1) In view of Proposition \ref{2}(2), we may assume that $I$ is a direct summand of $\syz^3M$.
In this case, $\syz I$ is a direct summand of $\syz^4M$.
By Proposition \ref{3}(1), the ideal $J$ is a direct summand of $\syz I$.
It follows that $J$ is a direct summand of $\syz^4M$.
We now conclude that $\m=I\oplus J$ is a direct summand of $\syz^3M\oplus\syz^4M$.

(2) Let us consider the case where $R/J$ is not a discrete valuation ring.
Then Proposition \ref{3}(2) implies that $\m$ is a direct summand of $\syz^2I$ and $\syz^3J$.
Also, either $I$ or $J$ is a direct summand of $\syz^3M$ by Proposition \ref{2}(2).
We thus have either that $\syz^2I$ is a direct summand of $\syz^5M$ or that $\syz^3J$ is a direct summand of $\syz^6M$.
Therefore, $\m$ is a direct summand of either $\syz^5M$ or $\syz^6M$.
By symmetry, this statement holds true as well in the case where $R/I$ is not a discrete valuation ring.
Now, suppose that both $R/I$ and $R/J$ are discrete valuation rings.
By Lemma \ref{6}(1), the ring $R$ is isomorphic to the fiber product $R/I\times_kR/J$.
It is observed from \cite[Corollary 2.7]{gorfib} and \cite[Theorem A]{syz} that there exist a complete regular local ring $S$ of dimension two and a regular system of parameters $x,y$ of $S$ such that the completion $\widehat R$ of $R$ is isomorphic to $S/(xy)$.
\end{proof}

\begin{rem}
Theorem \ref{36} is well illustrated by taking into account the fact that if $R=k[\![x,y]\!]/(xy)$ and $M=R/(x)$, then $\m$ is not a direct summand of $\syz^iM$ for all nonnegative integers $i$.
Indeed, the $R$-module $\syz^iM$ is isomorphic to either $R/(x)$ or $R/(y)$.
The $2$-generated $R$-module $\m$ cannot be a direct summand of a cyclic $R$-module.
\end{rem}

%%%%%%%%%%%%%%%%%%%%%%%%%%%%%%%%%%%%%%%%
\section{A fundamental theorem}

In this section, we state and prove the most general fundamental theorem in the paper.
We begin with showing a lemma on syzygies and transposes.
The proof of the second assertion of the lemma contains analogous arguments as in the proof of Lemma \ref{1}(3).

\begin{lem}\label{15}
\begin{enumerate}[\rm(1)]
\item
Let $0\to L\to M\to N\to0$ be an exact sequence of $R$-modules.
Then for each nonnegative integer $n$ there exist exact sequences 
$0\to\syz^nL\to\syz^nM\oplus R^{\oplus a}\to\syz^nN\to0$, $0\to\syz^{n+1}N\to\syz^nL\oplus R^{\oplus b}\to\syz^nM\to0$ and $0\to\syz^{n+1}M\to\syz^{n+1}N\oplus R^{\oplus c}\to\syz^nL\to0$, where $a,b,c$ are nonnegative integers.
\item
Let $M$ be an $R$-module.
Then $n:=\beta_0^R(M)-\beta_1^R(\tr M)\ge0$ and $M\cong\tr(\tr M)\oplus R^{\oplus n}$.
\end{enumerate}
\end{lem}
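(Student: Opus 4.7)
The plan for (1) is to take the first exact sequence as the anchor, obtained via the Horseshoe Lemma, and derive the other two by direct pullback constructions at $n=0$ followed by iteration through the first sequence for $n\ge1$.

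For the first sequence, I would apply the Horseshoe Lemma to the minimal free resolutions $P_\bullet\to L$ and $Q_\bullet\to N$ to produce a (not necessarily minimal) free resolution $S_\bullet\to M$ with $S_i=P_i\oplus Q_i$, fitting into a short exact sequence of complexes $0\to P_\bullet\to S_\bullet\to Q_\bullet\to0$. Reading off $n$-th syzygies yields $0\to\syz^nL\to\syz^n_SM\to\syz^nN\to0$, and by \cite[Proposition 1.1.2]{A} (already invoked in this paper) the complex $S_\bullet$ splits as the minimal resolution of $M$ plus a split-exact free complex, so $\syz^n_SM\cong\syz^nM\oplus R^{\oplus a}$ for some $a\ge0$. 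For the second sequence at $n=0$, I would lift the minimal free cover $F\twoheadrightarrow N$ to a map $F\to M$ and form the combined surjection $L\oplus F\twoheadrightarrow M$; its kernel is easily identified with $\syz N$, yielding $0\to\syz N\to L\oplus R^{\oplus\beta_0^R(N)}\to M\to0$. For $n\ge1$, applying the already-established first sequence to this new short exact sequence gives the required form, since $\syz^n(L\oplus R^{\oplus\beta_0^R(N)})\cong\syz^nL$ absorbs the free summand. The third sequence at $n=0$ is dual: compose the minimal free cover $G\twoheadrightarrow M$ with $M\twoheadrightarrow N$ to get a surjection $G\twoheadrightarrow N$ whose kernel decomposes as $\syz N\oplus R^{\oplus(\beta_0^R(M)-\beta_0^R(N))}$, and the natural map from this kernel onto $L$ has kernel $\syz M$; iterating via the first sequence handles $n\ge1$.

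For (2), the plan is to reduce to the case where $M$ has no nonzero free summand and then compute $\tr\tr M$ directly from a minimal free resolution. Writing $M\cong M_0\oplus R^{\oplus n}$ with $M_0$ having no free summand (via Krull--Schmidt), the identity $\tr R=0$ from Lemma \ref{1}(4) gives $\tr M\cong\tr M_0$, so it suffices to show $\tr\tr M_0\cong M_0$ and $\beta_1^R(\tr M_0)=\beta_0^R(M_0)$. Taking the minimal free resolution $F_1\xrightarrow{d_1}F_0\xrightarrow{d_0}M_0\to0$ and dualizing, the presentation $F_0^\ast\xrightarrow{d_1^\ast}F_1^\ast\twoheadrightarrow\tr M_0$ is minimal by minimality of $d_1$. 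The crucial observation is that $M_0^\ast=\im(d_0^\ast)\subseteq\m F_0^\ast$, since every homomorphism $M_0\to R$ has image in $\m$ by Lemma \ref{1}(1); hence $F_0^\ast\twoheadrightarrow F_0^\ast/M_0^\ast\cong\im(d_1^\ast)$ is a minimal free cover, giving $\beta_1^R(\tr M_0)=\rank F_0=\beta_0^R(M_0)$ and showing that the minimal resolution of $\tr M_0$ begins with $F_0^\ast\xrightarrow{d_1^\ast}F_1^\ast$. Dualizing back, $\tr\tr M_0=\cok(d_1)=M_0$, and the rank count yields $n=\beta_0^R(M)-\beta_1^R(\tr M)\ge0$.

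The main obstacle will be the careful bookkeeping in (1) to match the non-minimal resolutions produced by the Horseshoe Lemma and the subsequent pullback constructions with the minimal syzygies appearing in the stated sequences; once split-exact summands are identified and absorbed into the $R^{\oplus a}$, $R^{\oplus b}$, $R^{\oplus c}$ terms, the remaining diagram chases are routine.
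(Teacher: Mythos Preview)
Your argument is correct. For part (1) your approach coincides with the paper's: both obtain the first sequence via the Horseshoe Lemma, construct the second and third sequences at level $n=0$ by pullback/kernel computations, and then iterate using the first sequence. Your construction of the third sequence (composing the minimal cover $G\twoheadrightarrow M$ with $M\twoheadrightarrow N$ and identifying the kernel) is marginally more direct than the paper's, which instead applies the second-sequence construction twice and then uses cancellation of free summands (via \cite[Corollary 1.16]{LW}) to strip off the extra $R^{\oplus p}$.

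For part (2) the routes diverge more noticeably. The paper does \emph{not} reduce to the case where $M$ has no free summand; instead it extends the (possibly non-minimal) presentation $F_0^\ast\xrightarrow{f_1^\ast}F_1^\ast\to\tr M\to0$ to a free resolution $G$, invokes the decomposition $G\cong H\oplus E$ with $H$ minimal and $E$ split-exact (as in \cite[Proposition 1.1.2]{A}), and reads off $M\cong\tr\tr M\oplus E_1^\ast$ together with the rank count. Your approach---splitting off the maximal free summand first, then verifying directly via Lemma \ref{1}(1) that both $\im d_1^\ast\subseteq\m F_1^\ast$ and $\im d_0^\ast\subseteq\m F_0^\ast$, so that $F_0^\ast\to F_1^\ast$ already begins a \emph{minimal} resolution of $\tr M_0$---is a cleaner, more hands-on computation that avoids the general complex-splitting machinery, at the small cost of needing to first isolate the free part of $M$. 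Either route is entirely standard.
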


\begin{proof}
(1) Let $0\to L\to M\xrightarrow{\pi}N\to0$ be an exact sequence.
We prove the lemma step by step.

(i) The horseshoe lemma provides an exact sequence $0\to\syz^nL\to\syz^nM\oplus R^{\oplus a}\to\syz^nN\to0$ with $a\in\N$.

(ii) Take an exact sequence $0\to\syz N\to R^{\oplus p}\xrightarrow{\varepsilon}N\to0$ with $p=\nu(N)$.
The pullback diagram of $\pi$ and $\varepsilon$ produces an exact sequence $0\to\syz N\to L\oplus R^{\oplus p}\to M\to0$.
In a similar way we get an exact sequence $0\to\syz M\to\syz N\oplus R^{\oplus q}\xrightarrow{\omega}L\oplus R^{\oplus p}\to0$ with $q=\nu(M)$.
There is an exact sequence $0\to L\xrightarrow{\eta}L\oplus R^{\oplus p}\to R^{\oplus p}\to0$.
The pullback diagram of $\omega,\eta$ gives an exact sequence $0\to\syz M\to K\to L\to0$ and an isomorphism $K\oplus R^{\oplus p}\cong\syz N\oplus R^{\oplus q}$.
We have $q=\nu(M)\ge\nu(N)=p$ as $\pi$ is surjective.
It follows from \cite[Corollary 1.16]{LW} that $K\cong\syz N\oplus R^{\oplus q-p}$.
We obtain an exact sequence $0\to\syz M\to\syz N\oplus R^{\oplus r}\to L\to0$ with $r=q-p\ge0$.

(iii) Combining (i) and (ii) gives rise to two exact sequences $0\to\syz^{n+1}N\to\syz^nL\oplus R^{\oplus b}\to\syz^nM\to0$ and $0\to\syz^{n+1}M\to\syz^{n+1}N\oplus R^{\oplus c}\to\syz^nL\to0$ of $R$-modules, where $b,c\in\N$.

(2) Let $(F,f)$ be a minimal free resolution of $M$.
Then there is an exact sequence $F_0^\ast\xrightarrow{f_1^\ast}F_1^\ast\to\tr M\to0$.
Extend this to a free resolution $(G,g)$ of $\tr M$, so that $(G_1\xrightarrow{g_1}G_0)$ coincides with $(F_0^\ast\xrightarrow{f_1^\ast}F_1^\ast)$.
Let $(H,h)$ be a minimal free resolution of $\tr M$.
It follows from \cite[Proposition 1.1.2]{A} that there are a split-exact complex $(E,e)$ and a complex isomorphism $G\cong H\oplus E$.
This induces a complex isomorphism $G^\ast\cong H^\ast\oplus E^\ast$, and we obtain module isomorphisms $M\cong\cok f_1\cong\cok(g_1^\ast)\cong\cok(h_1^\ast)\oplus\cok(e_1^\ast)$.
We see that the module $\cok(h_1^\ast)$ is isomorphic to $\tr(\tr M)$.
Note that $G_0=F_1^\ast$ and $H_0$ are free modules of the same rank.
Hence $e_1$ is a map from $E_1$ to $E_0=0$.
Therefore, the module $\cok(e_1^\ast)$ is isomorphic to $E_1^\ast$, which is a free module with
$$
\rank E_1^\ast=\rank E_1=\rank G_1-\rank H_1=\rank F_0^\ast-\rank H_1=\rank F_0-\rank H_1=\beta_0(M)-\beta_1(\tr M).
$$
Consequently, we obtain a desired isomorphism $M\cong\tr(\tr M)\oplus R^{\oplus\beta_0(M)-\beta_1(\tr M)}$ of $R$-modules.
\end{proof}

Next we recall some notation about subcategories of the module category.
The symbols $[-]$ and $[-]_n$ are introduced in \cite{radius} to define the radius of each subcategory of the module category.

\begin{dfn}
\begin{enumerate}[(1)]
\item
We denote by $\mod R$ the category of (finitely generated) $R$-modules.
\item
For an $R$-module $X$ we denote by $\add X$ the {\em additive closure} of $X$, which is defined to be the subcategory of $\mod R$ consisting of all direct summands of finite direct sums of copies of $X$.
\item
For a subcategory $\X$ of $\mod R$, we let $[\X]$ stand for the smallest subcategory of $\mod R$ that contains $\X$ and is closed under finite direct sums, direct summands and syzygies.
\item
For subcategories $\X,\Y$ of $\mod R$, denote by $\X\circ\Y$ the subcategory of $\mod R$ consisting of modules $M$ such that there is an exact sequence $0\to X\to M\to Y\to0$ with $X\in\X$ and $Y\in\Y$.
We set $\X\bullet\Y=[[\X]\circ[\Y]]$.
\item
For a subcategory $\X$ of $\mod R$, we set $[\X]_0=0$ and $[\X]_n=[\X]_{n-1}\bullet\X=[[\X]_{n-1}\circ[\X]]$ for each $n\ge1$.
\end{enumerate}
\end{dfn}

\begin{rem}
Let $M,N\in\mod R$ and $n>0$.
Then $N\in[M]_n$ if and only if there exists an exact sequence $0\to A\to B\to C\to0$ of $R$-modules with $A\in[M]_{n-1}$ and $C\in[M]_1$ such that $N$ is a direct summand of $B$; see \cite[Proposition 2.2]{radius}.
Using this, we easily observe that if $N\in[M]_n$, then $\syz^iN\in[\syz^iM]_n$ for all $i\ge0$.
\end{rem}

We also need to use the notion of $n$-torsionfree modules and the subcategory $\G_{m,n}$ of the module category, which are introduced and studied by Auslander and Bridger \cite{AB} and Iyama \cite{I}.

\begin{dfn}
For nonnegative integers $m$ and $n$, we denote by $\G_{n,m}$ the subcategory of $\mod R$ consisting of those $R$-modules $M$ which satisfy the equalities $\Ext_R^i(M,R)=\Ext_R^j(\tr M,R)=0$ for all integers $1\le i\le n$ and $1\le j\le m$.
An {\em $n$-torsionfree} $R$-module is defined as an $R$-module that belongs to the subcategory $\G_{0,n}$.
\end{dfn}

We prepare a lemma which is used to give a proof of the main result of this section.

\begin{lem}\label{14}
Let $n,s$ be nonnegative integers with $n\ge s$.
Let $M$ be an $n$-torsionfree $R$-module, and $N$ an $R$-module with $\pd N\le s$.
Then $\syz^s\Hom(\tr M,N)\in[\syz^2M]_{s+1}$ and $\Ext^i(\tr M,N)=0$ for every $1\le i\le n-s$.
\end{lem}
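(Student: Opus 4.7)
The plan is to prove both conclusions of the lemma simultaneously by induction on $s$, using as the reduction step the short exact sequence $0\to K\to F\to N\to 0$ with $F$ a free cover of $N$ (so $K=\syz N$ and $\pd K\le s-1$).

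For the Ext-vanishing, the base case $s=0$ is immediate: $N$ is free, so $\Ext^i(\tr M,N)$ is a direct sum of copies of $\Ext^i(\tr M,R)$, which vanish for $1\le i\le n$ by $n$-torsionfreeness. In the inductive step, the long exact sequence of $\Ext^\bullet(\tr M,-)$ applied to $0\to K\to F\to N\to 0$, combined with $\Ext^j(\tr M,F)=0$ for $1\le j\le n$, yields $\Ext^i(\tr M,N)\cong\Ext^{i+1}(\tr M,K)$ for $1\le i\le n-1$. The inductive hypothesis applied to $K$ (which has $\pd K\le s-1$) then gives vanishing of the right-hand side for $1\le i\le n-s$.

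For the syzygy membership, set $\X=\syz^2M$. The base case $s=0$ follows from Lemma~\ref{1}(2): since $N$ is free, $\Hom(\tr M,N)\cong(\syz^2 M)^{\oplus\rank N}\in\add(\syz^2 M)\subseteq[\X]_1$. For the inductive step, the Ext-vanishing just obtained (applied to $K$, using $n\ge s$) yields $\Ext^1(\tr M,K)=0$, so $\Hom(\tr M,-)$ preserves exactness:
\[
0\to \Hom(\tr M,K)\to \Hom(\tr M,F)\to \Hom(\tr M,N)\to 0.
\]
Applying the third short exact sequence of Lemma~\ref{15}(1) with the index there set to $s-1$ then produces
\[
0\to \syz^s\Hom(\tr M,F)\to \syz^s\Hom(\tr M,N)\oplus R^{\oplus c}\to \syz^{s-1}\Hom(\tr M,K)\to 0.
\]
The left term equals $(\syz^{s+2}M)^{\oplus\rank F}\in\add(\syz^{s+2}M)\subseteq[\X]_1$ (using the syzygy-closure of $[\X]$), while the right term belongs to $[\X]_s$ by the inductive hypothesis applied to $K$. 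Consequently the middle $\syz^s\Hom(\tr M,N)\oplus R^{\oplus c}$ lies in $[\X]_1\bullet[\X]_s\subseteq[\X]_{s+1}$, and closure of $[\X]_{s+1}$ under direct summands gives $\syz^s\Hom(\tr M,N)\in[\X]_{s+1}$, as desired.

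The main obstacle is the orientation mismatch in the SES produced by Lemma~\ref{15}(1): the $[\X]_1$-piece appears as the submodule and the $[\X]_s$-piece as the quotient, whereas the defining recursion $[\X]_{s+1}=[\X]_s\bullet[\X]_1$ has them reversed. Bridging this requires the filtration-composition inclusion $[\X]_a\bullet[\X]_b\subseteq[\X]_{a+b}$, a standard but not entirely trivial property of the $\{[\X]_n\}_n$ filtration, provable by induction on $b$: write a $[\X]_b$-element as a summand of the middle of a SES $0\to A''\to B''\to C''\to 0$ with $A''\in[\X]_{b-1}$ and $C''\in[\X]_1$, enlarge the original SES by the complementary summand so that its quotient becomes $B''$, then pull back the preimage of $A''$ to obtain two SES's that exhibit the original middle as a summand of a $[\X]_{a+b}$-object.
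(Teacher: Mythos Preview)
Your proof is correct and follows essentially the same approach as the paper's: induction on $s$, short exact sequence $0\to K\to F\to N\to0$ with $F$ free, application of $\Hom(\tr M,-)$, and then the third exact sequence of Lemma~\ref{15}(1) at index $s-1$ to produce the same triangle $0\to(\syz^{s+2}M)^{\oplus\rank F}\to\syz^s\Hom(\tr M,N)\oplus R^{\oplus c}\to\syz^{s-1}\Hom(\tr M,K)\to0$. The only difference is that you make explicit the ``orientation'' issue---that the needed inclusion is $[\X]_1\bullet[\X]_s\subseteq[\X]_{s+1}$ rather than the definitional $[\X]_s\bullet[\X]_1=[\X]_{s+1}$---and sketch a proof of $[\X]_a\bullet[\X]_b\subseteq[\X]_{a+b}$; the paper invokes this silently, relying on the reference \cite{radius} where this associativity is recorded (Proposition~2.2 there).
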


\begin{proof}
There is an exact sequence $0\to R^{\oplus b_s}\to\cdots\to R^{\oplus b_0}\to N\to0$ of $R$-modules with $b_i\in\N$ for each $i$.
We show the lemma by induction on $s$.
Let $s=0$.
Then $N$ is isomorphic to $R^{\oplus b_0}$.
By Lemma \ref{1}(2) we have
$$
\begin{array}{l}
\Hom(\tr M,N)\cong\Hom(\tr M,R^{\oplus b_0})\cong\Hom(\tr M,R)^{\oplus b_0}\cong(\syz^2M)^{\oplus b_0}\in[\syz^2M]_1,\text{ and}\\
\Ext^i(\tr M,N)\cong\Ext^i(\tr M,R^{\oplus b_0})\cong\Ext^i(\tr M,R)^{\oplus b_0}=0\text{ for all integers }1\le i\le n.
\end{array}
$$
Let $s>0$.
Take the kernel $L$ of the map $R^{\oplus b_0}\to N$.
Then $\pd L\le s-1$ and we get an exact sequence $\sigma:0\to L\to R^{\oplus b_0}\to N\to0$.
As $n\ge s-1\ge0$, we can apply the induction hypothesis to get $\syz^{s-1}\Hom(\tr M,L)\in[\syz^2M]_s$ and $\Ext^i(\tr M,L)=0$ for each $1\le i\le n-s+1$.
Since $n-s+1\ge1$, we have $\Ext^1(\tr M,L)=0$.
Applying $\Hom(\tr M,-)$ to $\sigma$ yields an exact sequence $0\to\Hom(\tr M,L)\to(\syz^2M)^{\oplus b_0}\to\Hom(\tr M,N)\to0$ and an equality $\Ext^i(\tr M,N)=0$ for all $1\le i\le n-s$.
Using Lemma \ref{15}(1), we obtain an exact sequence $0\to(\syz^{s+2}M)^{\oplus b_0}\to\syz^s\Hom(\tr M,N)\oplus F\to\syz^{s-1}\Hom(\tr M,L)\to0$ with $F$ free.
Since $(\syz^{s+2}M)^{\oplus b_0}$ and $\syz^{s-1}\Hom(\tr M,L)$ are in $[\syz^2M]_1$ and $[\syz^2M]_s$ respectively, $\syz^s\Hom(\tr M,N)$ belongs to $[\syz^2M]_{s+1}$.
\end{proof}

To state our theorem we need to recall the definition of a cosyzygy of a module.

\begin{dfn}
Let $M$ be an $R$-module.
Let $\lambda:M\to M^{\ast\ast}$ be the natural homomorphism.
Let $\cdots\to F_1\to F_0\xrightarrow{\pi}M^\ast\to0$ be a minimal free resolution of $M^\ast$.
The {\em first cosyzygy} $\syz_R^{-1}M$ of $M$ is defined as the cokernel of the composite map $\pi^\ast\lambda:M\to F_0^\ast$.
Note that if $f_1,\dots,f_r$ is a minimal system of generators of $M^\ast$, then the map $\pi^\ast\lambda$ is identified with the map $M\to R^{\oplus r}$ given by the transpose of the matrix $(f_1,\dots,f_r)$.
For each integer $n\ge2$, we define the {\em $n$th cosyzygy} $\syz_R^{-n}M$ inductively by $\syz_R^{-n}M=\syz_R^{-1}(\syz_R^{-(n-1)}M)$.
The $n$th cosyzygy of $M$ is uniquely determined by $M$ up to isomorphism, since so is a minimal free resolution of $M^\ast$.
Note by definition that one has $\syz_R^{-i}P=0$ for every free $R$-module $P$ and every positive integer $i$.
\end{dfn}

Now we can state and prove the following theorem, which is the main result of this section.
In fact, this theorem plays a fundamental role to obtain all the main results of this paper that are stated later.

\begin{thm}\label{19}
Let $1\le n\le t+1$.
Let $M,N$ be $R$-modules.
Assume that $\syz^nN$ belongs to $\add(R\oplus\syz^{n+1}k)$.
\begin{enumerate}[\rm(1)]
\item
Suppose that $M$ is $n$-torsionfree.
Then the $R$-module $\Ext^1(\tr M,\syz^{-n}\syz^nN)$ is a $k$-vector space, and
$$
\syz^{n-1}\Hom(\tr M,\syz^{-n}\syz^nN)\in[\syz^2M]_{n+1}\subseteq[M]_{n+1},\quad
\syz^n\Hom(\tr M,N)\in[\syz^2M]_{2n+1}\subseteq[M]_{2n+1}.
$$
\item
Suppose that $M$ is $(n+1)$-torsionfree.
Then the $R$-module $\Ext^1(\tr M,N)$ is a $k$-vector space, and
$$
\syz^{n+2}\Ext^1(\tr M,N)\in[\syz^2M\oplus\syz^{n+1}N]_{4n+3}\subseteq[M\oplus N]_{4n+3}.
$$
\end{enumerate}
\end{thm}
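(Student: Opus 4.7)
My plan is to prove Theorem~\ref{19} by first pinning down the structure of $\syz^{-n}\syz^{n}N$ modulo free summands, then applying Lemma~\ref{1}(1) to extract the $k$-vector-space claims and combining Lemma~\ref{15}(1) with Lemma~\ref{14} to track the cone counts. The central observation for the setup is that $\syz^{n+1}k\cong\syz^{n}\m$, so the hypothesis $\syz^{n}N\in\add(R\oplus\syz^{n+1}k)$ says that $\syz^{n}N$ is, up to a free summand, a direct summand of $\syz^{n}(\m)^{\oplus b}$ for some $b$.

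The first key reduction is to show $\syz^{-n}\syz^{n}N\in\add(R\oplus\m)$. Since cosyzygies annihilate free summands, this reduces to establishing $\syz^{-n}\syz^{n+1}k\cong\m$ modulo free summands. I would prove this by induction on $i$, showing $\syz^{-i}\syz^{i+1}k\cong\syz k=\m$ step by step. Each inductive step dualizes the short exact sequence $0\to\syz^{j+1}k\to R^{\beta_{j}}\to\syz^{j}k\to 0$ and uses the Ext vanishings $\Ext^{1}(\syz^{j}k,R)=0$ for $j\le t$ (available because $n\le t+1$) to identify $(\syz^{j+1}k)^{\ast}$ as a minimal presentation quotient of $R^{\beta_{j}}$ whose first cosyzygy matches $\syz^{j}k$ up to free summands.

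With this identification, part~(1) follows quickly. For $n$-torsionfree $M$, Lemma~\ref{1}(1) gives $\Ext^{1}(\tr M,\m)\cong\Hom(\tr M,k)$, a $k$-vector space, so $\Ext^{1}(\tr M,\syz^{-n}\syz^{n}N)$ is a $k$-vector space. Moreover, $\Hom(\tr M,\syz^{-n}\syz^{n}N)$ is a summand of a direct sum of copies of $\Hom(\tr M,\m)\cong\syz^{2}M$ (by Lemma~\ref{1}(2)) and free modules, so $\syz^{n-1}\Hom(\tr M,\syz^{-n}\syz^{n}N)\in[\syz^{2}M]_{1}\subseteq[\syz^{2}M]_{n+1}$. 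For the second bound $\syz^{n}\Hom(\tr M,N)\in[\syz^{2}M]_{2n+1}$, I would run the $n$ cosyzygy sequences $0\to\syz^{-i}\syz^{n}N\to G_{i}\to\syz^{-(i+1)}\syz^{n}N\to 0$ through $\Hom(\tr M,-)$, then convert each resulting short exact sequence via Lemma~\ref{15}(1) into extension data, adding $n$ cones to the previous $n+1$ to reach $2n+1$.

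For part~(2), with $M$ now $(n+1)$-torsionfree, iterating the Ext-shift on the short exact sequences $0\to\syz^{i+1}N\to F_{i}\to\syz^{i}N\to 0$ for $0\le i\le n-1$ (using that Ext of free modules vanishes in the relevant range) yields $\Ext^{1}(\tr M,N)\cong\Ext^{n+1}(\tr M,\syz^{n}N)$, which by the hypothesis is a summand of $\Ext^{n+1}(\tr M,\syz^{n+1}k)^{\oplus b}\cong\Ext^{1}(\tr M,\m)^{\oplus b}\cong\Hom(\tr M,k)^{\oplus b}$, a $k$-vector space. To bound $\syz^{n+2}\Ext^{1}(\tr M,N)$ inside $[\syz^{2}M\oplus\syz^{n+1}N]_{4n+3}$, I would expand the Ext-shift back into a filtration via the same $n$ short exact sequences, handle each by Lemma~\ref{15}(1), and invoke part~(1) to bound the $\syz^{n}\Hom(\tr M,\syz^{n}N)$-term in $[\syz^{2}M]_{2n+1}$; the residuals $\syz^{i}N$ at each shift step contribute extra cones, with $\syz^{n+1}N$ appearing as the final boundary kernel and absorbing the remaining count up to $4n+3$. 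The principal obstacle is the first structural identification $\syz^{-n}\syz^{n+1}k\cong\m$ modulo free summands, which requires careful tracking of minimality through dualization at each inductive step; once it is established, the rest of the proof is a mechanical but intricate manipulation of short exact sequences and cone counts.
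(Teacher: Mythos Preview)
Your central reduction---that $\syz^{-n}\syz^{n+1}k\cong\m$ modulo free summands---is false in general, and the argument you sketch for it breaks at the very first nontrivial step. You assert $\Ext^1(\syz^jk,R)=0$ for $j\le t$, but $\Ext^1(\syz^{t-1}k,R)\cong\Ext^t(k,R)\ne0$ by the definition of depth, and for $j\ge t-1$ there is no reason for $\Ext^{j+1}(k,R)$ to vanish unless $R$ is Gorenstein. Consequently the dualized sequences you want to use are not short exact, and the cosyzygies do not simply peel off syzygies. Notice, too, that your claim would force $\syz^{n-1}\Hom(\tr M,\syz^{-n}\syz^nN)\in[\syz^2M]_1$, which is strictly stronger than the stated bound $[\syz^2M]_{n+1}$; the $n+1$ is not slack but is genuinely needed.

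The paper's route is essentially different. It never identifies $\syz^{-n}\syz^{n+1}k$ with $\m$; instead it invokes the Auslander--Bridger approximation \cite[Proposition (2.21)]{AB}: since $\syz^n\m$ is $n$-torsionfree (via grade estimates on $\Ext^i(\m,R)$), there are exact sequences $0\to X\to F\oplus\syz^{-n}\syz^{n+1}k\to\m\to0$ and $0\to Y\to G\oplus\syz^{-n}\syz^nN\to N\to0$ with $\pd X,\pd Y<n$. The error terms $X,Y$ are in general neither zero nor free; the point is that Lemma~\ref{14} (applied with $s=n-1$) gives $\syz^{n-1}\Hom(\tr M,X)\in[\syz^2M]_n$ and $\Ext^1(\tr M,X)=0$, which is exactly what makes the bound come out as $n+1$. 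For part~(2), the paper does not run a filtration as you propose but instead passes to $M'=\tr\syz\tr M$, reapplies part~(1) to $M'$, and uses that $\syz M'\cong M$ up to free summands to transport the bound back; your outline of ``expanding the Ext-shift back into a filtration'' is too vague to reach $4n+3$ and in any case rests on the failed identification from part~(1).
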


\begin{proof}
By assumption, $\syz^nN$ is a direct summand of $P\oplus\syz^{n+1}k^{\oplus a}$ for some free $R$-module $P$ and some integer $a\ge0$.
Note that any $k$-vector space has grade at least $t$ as an $R$-module.
The inequality $\grade\Ext^i(\m,R)\ge i-1$ holds for each integer $1\le i\le n$, since $\Ext^i(\m,R)=\Ext^{i+1}(k,R)$ is a $k$-vector space and $i-1\le n-1\le t$.
By \cite[Proposition (2.26)]{AB}, the $R$-module $\syz^{n+1}k=\syz^n\m$ is $n$-torsionfree, so is $P\oplus\syz^{n+1}k^{\oplus a}$, and so is $\syz^nN$.
It follows from \cite[Proposition (2.21)]{AB} that there exist two exact sequences of $R$-modules:
$$
\sigma:\ 0\to X\to F\oplus\syz^{-n}\syz^{n+1}k\to\m\to0,\qquad
\tau:\ 0\to Y\to G\oplus\syz^{-n}\syz^nN\to N\to0
$$
with $X,Y$ of projective dimension less than $n$ and $F,G$ free.
Set $H=F\oplus\syz^{-n}\syz^{n+1}k$ and $K=G\oplus\syz^{-n}\syz^nN$.

(1) Let $M$ be an $n$-torsionfree $R$-module.
Using Lemma \ref{14} for $s=n-1$, we see that $\syz^{n-1}\Hom(\tr M,Z)$ belongs to $[\syz^2M]_n$ and $\Ext^1(\tr M,Z)=0$ for each $Z\in\{X,Y\}$.
Lemma \ref{1}(2) shows that $\Hom(\tr M,\m)$ is isomorphic to $\syz^2M$.
Applying the functor $\Hom(\tr M,-)$ to $\sigma$ and $\tau$, we get exact sequences of $R$-modules:
$$
\begin{array}{l}
0\to\Hom(\tr M,X)\to\Hom(\tr M,H)\to\syz^2M\to0,\\
0\to\Hom(\tr M,Y)\to\Hom(\tr M,K)\to\Hom(\tr M,N)\to0.
\end{array}
$$
Using Lemma \ref{15}(1) for these exact sequences, we obtain exact sequences of $R$-modules with $Q,U$ free:
$$
\begin{array}{l}
\zeta:\ 0\to\syz^{n-1}\Hom(\tr M,X)\to\syz^{n-1}\Hom(\tr M,H)\oplus Q\to\syz^{n+1}M\to0,\\
\eta:\ 0\to\syz^n\Hom(\tr M,K)\to\syz^n\Hom(\tr M,N)\oplus U\to\syz^{n-1}\Hom(\tr M,Y)\to0.
\end{array}
$$
The modules $\syz^{n-1}\Hom(\tr M,X)$ and $\syz^{n+1}M$ are in $[\syz^2M]_n$ and $[\syz^2M]_1$, respectively.
The exact sequence $\zeta$ shows that $\syz^{n-1}\Hom(\tr M,H)$ is in $[\syz^2M]_{n+1}$, and so is its direct summand $\syz^{n-1}\Hom(\tr M,\syz^{-n}\syz^{n+1}k)$.
Since $\syz^nN$ is a direct summand of $P\oplus\syz^{n+1}k^{\oplus a}$, the module $\syz^{n-1}\Hom(\tr M,\syz^{-n}\syz^nN)$ is a direct summand of $\syz^{n-1}\Hom(\tr M,\syz^{-n}\syz^{n+1}k)^{\oplus a}$ as $\syz^{-n}P=0$.
Therefore, the module $\syz^{n-1}\Hom(\tr M,\syz^{-n}\syz^nN)$ belongs to $[\syz^2M]_{n+1}$.
Lemma \ref{1}(2) shows $\syz^{n-1}\Hom(\tr M,R^{\oplus r})\cong\syz^{n-1}\syz^2M^{\oplus r}\in[\syz^2M]_1$ for any $r\in\N$.
Hence
$$
\syz^{n-1}\Hom(\tr M,K)=\syz^{n-1}\Hom(\tr M,G)\oplus\syz^{n-1}\Hom(\tr M,\syz^{-n}\syz^nN)\in[\syz^2M]_{n+1},
$$
and thus $\syz^n\Hom(\tr M,K)$ is in $[\syz^2M]_{n+1}$.
As $\syz^{n-1}\Hom(\tr M,Y)$ is in $[\syz^2M]_n$, the exact sequence $\eta$ shows that $\syz^n\Hom(\tr M,N)$ belongs to $[\syz^2M]_{2n+1}$.
The $R$-module $\Ext^1(\tr M,\syz^{-n}\syz^nN)$ is a direct summand of
$$
E:=\Ext^1(\tr M,\syz^{-n}(P\oplus\syz^{n+1}k^{\oplus a}))=\Ext^1(\tr M,\syz^{-n}\syz^{n+1}k)^{\oplus a}.
$$
As $M$ is $n$-torsionfree and $n$ is positive, $\Ext^1(\tr M,R)$ vanishes.
Since $\Ext^1(\tr M,X)=0$, the exact sequence $\sigma$ induces a monomorphism $E\hookrightarrow\Ext^1(\tr M,\m)^{\oplus a}$.
Lemma \ref{1}(2) shows that $\Ext^1(\tr M,\m)$ is isomorphic to the $k$-vector space $\Hom(\tr M,k)$.
Consequently, the module $\Ext^1(\tr M,\syz^{-n}\syz^nN)$ is a $k$-vector space.

(2) Let $M$ be $(n+1)$-torsionfree.
Lemma \ref{14} implies $\Ext^i(\tr M,Y)=0$ for $1\le i\le(n+1)-(n-1)=2$.
From $\tau$ we get an isomorphism $\Ext^1(\tr M,G\oplus\syz^{-n}\syz^nN)\cong\Ext^1(\tr M,N)$.
Since $\Ext^1(\tr M,R)$ vanishes, $\Ext^1(\tr M,N)$ is isomorphic to $\Ext^1(\tr M,\syz^{-n}\syz^nN)$, which is a $k$-vector space by (1).
An exact sequence $0\to\syz\tr M\to R^{\oplus b}\to\tr M\to0$ with $b\in\N$ induces exact sequences $0\to\Hom(\tr M,N)\to N^{\oplus b}\to C\to0$ and $0\to C\to\Hom(\syz\tr M,N)\to\Ext^1(\tr M,N)\to0$.
Applying Lemma \ref{15}(1), we obtain exact sequences
$$
\begin{array}{l}
\lambda:\ 0\to\syz^{n+1}N^{\oplus b}\to \syz^{n+1}C\oplus R^{\oplus c}\to\syz^n\Hom(\tr M,N)\to0,\\
\rho:\ 0\to\syz^{n+2}\Hom(\syz\tr M,N)\to\syz^{n+2}\Ext^1(\tr M,N)\oplus R^{\oplus d}\to\syz^{n+1}C\to0.
\end{array}
$$
By (1) the module $\syz^n\Hom(\tr M,N)$ belongs to $[\syz^2M]_{2n+1}$ , while the module $\syz^{n+1}N^{\oplus b}$ is in $[\syz^{n+1}N]_1$.
The exact sequence $\lambda$ shows that $\syz^{n+1}C$ belongs to $[\syz^2M\oplus\syz^{n+1}N]_{2n+2}$.
Now, set $M'=\tr\syz\tr M$.
Applying \cite[Proposition 1.1.1]{I} to the $(n+1)$-torsionfree module $M$, we observe that $M'$ is in $\G_{1n}$, and in particular, $M'$ is $n$-torsionfree.
Thus we can apply (1) to $M'$ to get the containment $\syz^n\Hom(\tr M',N)\in[\syz^2M']_{2n+1}$.
Lemma \ref{15}(2) gives an isomorphism $\tr M'\oplus R^{\oplus e}\cong\syz\tr M$ with $e\in\N$.
As $M$ is $1$-torsionfree, applying \cite[Proposition 1.1.1]{I} again, we see that $\syz M'=\syz\tr\syz\tr M$ is isomorphic to $M$ up to free summands.
We have
$$
\syz^n\Hom(\syz\tr M,N)\cong\syz^n\Hom(\tr M',N)\oplus\syz^nN^{\oplus e}\in[\syz^2M'\oplus\syz^nN]_{2n+1}=[\syz M\oplus\syz^nN]_{2n+1}.
$$
Taking the second syzygies, we see that $\syz^{n+2}\Hom(\syz\tr M,N)$ is in $[\syz^3M\oplus\syz^{n+2}N]_{2n+1}$, which is contained in $[\syz^2M\oplus\syz^{n+1}N]_{2n+1}$.
The exact sequence $\rho$ shows that $\syz^{n+2}\Ext^1(\tr M,N)$ is in $[\syz^2M\oplus\syz^{n+1}N]_{4n+3}$.
\end{proof}

%%%%%%%%%%%%%%%%%%%%%%%%%%%%%%%%%%%%%%
\section{Syzygies of the residue field}

In this section, applying the general theorem obtained in the previous section, we prove a theorem regarding the structure of syzygies of the residue field. 
First of all, we recall a couple of definitions, including those of a Burch ring and a local ring with quasi-decomposable maximal ideal.
These two notions of local rings are introduced and investigated in \cite{burch} and \cite{fiber}, respectively.
Studies of these two notions have been proceeded by many people and various results have been obtained so far; see \cite{AINS,CK,DE,DM,DK,restf,GS,qf,gorfib,R,dlr}.

\begin{dfn}
\begin{enumerate}[(1)]
\item
We denote by $\edim R$ the {\em embedding dimension} of $R$, that is, $\edim R=\nu_R(\m)$.
\item
We say that $R$ is a {\em hypersurface} if the inequality $\edim R-\depth R\le1$ holds.
This is equivalent to saying that $\widehat R\cong S/(f)$ for some regular local ring $(S,\n)$ and some element $f$ in $\n$; see \cite[the beginning of \S5.1]{A}.
\item
We say that $R$ is a {\em Burch ring} provided that there exist a maximal $\widehat R$-regular sequence $\xx$ in $\widehat R$, a regular local ring $(S,\n)$ and an ideal $I$ of $S$ such that $\n(I:_S\n)\ne\n I$ (i.e., $I$ is a {\em Burch ideal} of $S$) and $\widehat R/(\xx)\cong S/I$.
\item
We say that $\m$ is {\em quasi-decomposable} if $\m/(\xx)$ is decomposable over $R$ for some $R$-regular sequence $\xx$.
\end{enumerate}
\end{dfn}

We state several examples of Burch rings and local rings with quasi-decomposable maximal ideal.

\begin{ex}\label{37}
\begin{enumerate}[(1)]
\item
A local hypersurface is a Burch ring.
More generally, a local ring is a Gorenstein Burch ring if and only if it is a hypersurface; we refer the reader to \cite[Proposition 5.1]{burch}.
\item
A Cohen--Macaulay local ring with minimal multiplicity and infinite residue field is Burch by \cite[Proposition 5.2]{burch}.
If it is not a hypersurface, its maximal ideal is quasi-decomposable by \cite[Example 4.7]{fiber}.
\item
A local ring whose completion has a rational surface singularity is Burch, and has quasi-decomposable maximal ideal if it is not a hypersurface.
This is by (2) and \cite[Proposition 3.8]{ST} (cf. \cite
[Theorem 4]{Ar}).
%\cite[Theorem 3.1(1)]{HW}.
\item
The fiber product $A\times_kB$ of two local rings $A$ and $B$ with common residue field $k$ has decomposable maximal ideal, and hence it is quasi-decomposable.
This is none other than Lemma \ref{6}(1).
\item
A local ring which deforms to an $(\mathrm{A}_1)$-singularity of dimension one is a local hypersurface with minimal multiplicity and quasi-decomposable maximal ideal.
In particular, a regular local ring of dimension at least two has quasi-decomposable maximal ideal.
Indeed, let $(R,\m,k)$ be a local ring of positive dimension $d$.
Suppose that there exist an $R$-regular sequence $\xx=x_1,\dots,x_{d-1}$, a regular local ring $S$ of dimension two, and a regular system of parameters $y,z$ of $S$ such that $R/(\xx)$ is isomorphic to $S/(yz)$.
Then $R$ is a Cohen--Macaulay ring, and one has $\m/\m^2\cong((\m/(\xx))/(\m/(\xx))^2)\oplus(\m^2+(\xx)/\m^2)$.
Hence it holds that
$$
\edim R-\depth R=\edim R/(\xx)+\dim_k(\m^2+(\xx)/\m^2)-d\le2+(d-1)-d=1.
$$
Therefore, the local ring $R$ is a hypersurface.
Using \cite[Theorem 14.9]{M}, we have that $\e(R)\le\e(R/(\xx))=\e(S/(yz))=2$, where for a local ring $A$ we denote by $\e(A)$ the (Hilbert--Samuel) multiplicity of $A$.
It is observed that the equality $\e(R)=\edim R-\dim R+1$ holds, which means that $R$ has minimal multiplicity.
\end{enumerate}
\end{ex}

The $(t+2)$nd syzygy of $k$ satisfies a remarkable property provided $R$ is Burch or $\m$ is quasi-decomposable.

\begin{prop}\label{10}
\begin{enumerate}[\rm(1)]
\item
If the local ring $R$ is a singular hypersurface, then $\syz^tk$ is isomorphic to $\syz^{t+2}k$.
\item
If the local ring $R$ is a singular Burch ring, then $\syz^tk$ is a direct summand of $\syz^{t+2}k$.
\item
If the maximal ideal $\m$ of $R$ is quasi-decomposable, then $\syz^{t+1}k$ is a direct summand of $\syz^{t+2}k$.
\end{enumerate}
\end{prop}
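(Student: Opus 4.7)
Assertions (1) and (2) are essentially known. For (1), it is a classical consequence of Eisenbud's matrix factorization theorem that the minimal free resolution of $k$ over a hypersurface is eventually periodic of period two; reducing modulo a maximal $R$-regular sequence to the artinian hypersurface case and lifting back yields $\syz^t k \cong \syz^{t+2} k$. Assertion (2) is established in \cite{burch} for singular Burch rings.

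For assertion (3), which is the new content, the plan is to induct on the length $c$ of an $R$-regular sequence $\xx = x_1, \ldots, x_c$ in $\m$ such that $\m/(\xx)$ is decomposable over $R/(\xx)$. The base case $c = 0$ forces $\m$ itself to be decomposable, whence $t \le 1$ by Lemma \ref{6}(2); Corollary \ref{7} then gives $\syz^1 k = \m$ as a direct summand of $\syz^2 k$, and when $t = 1$ taking one further syzygy yields $\syz^2 k$ as a direct summand of $\syz^3 k$, using that a minimal free resolution of a direct sum is the direct sum of the minimal resolutions of the summands, so that $\syz$ preserves summand relations.

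For the inductive step $c \ge 1$, set $R_1 = R/(x_1)$, of depth $t - 1$. Then $x_2, \ldots, x_c$ is $R_1$-regular with $\m_1/(x_2, \ldots, x_c) = \m/(\xx)$ decomposable, so that $\m_1$ is quasi-decomposable via a sequence of length $c - 1$, and by induction $\syz^t_{R_1} k$ is a direct summand of $\syz^{t+1}_{R_1} k$. The key transfer principle I shall establish is: for any $R_1$-module $N$ and any $m \ge 1$, the $R$-module $\syz^m_R(\syz^1_{R_1} N)$ is isomorphic to $\syz^{m+1}_R N$ plus a free $R$-summand. This will follow from the short exact sequence $0 \to xF_0 \to \syz^1_R N \to \syz^1_{R_1} N \to 0$, where $F_0$ is the minimal $R$-free cover of $N$: the inclusion $xF_0 \subseteq \syz^1_R N$ holds because $xN = 0$, and $xF_0 \cong F_0$ is $R$-free because $x$ is $R$-regular; then the first exact sequence in Lemma \ref{15}(1), combined with the vanishing of $\syz^m$ on free modules for $m \ge 1$, delivers the isomorphism.

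Applying $\syz^1_R$ to the inductive summand relation (a relation preserved under $\syz^1_R$ by direct summing of minimal resolutions) and invoking the transfer principle, one concludes that $\syz^{t+1}_R k \oplus R^a$ is a direct summand of $\syz^{t+2}_R k \oplus R^b$ for some $a, b \ge 0$. Since $\syz^{t+1}_R k$ and $\syz^{t+2}_R k$ lie in $\m F_\bullet$ by minimality and hence have no free $R$-summand, Krull--Schmidt for finitely generated modules over the local ring $R$ allows the free summands to be cancelled, yielding $\syz^{t+1}_R k$ as a direct summand of $\syz^{t+2}_R k$. The main technical point is the clean identification $\syz^1_R N / xF_0 \cong \syz^1_{R_1} N$ of $R_1$-modules underpinning the transfer principle; the rest is formal manipulation of Lemma \ref{15}(1), Corollary \ref{7}, and minimality considerations.
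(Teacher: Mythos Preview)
Your approach is essentially the same as the paper's: reduce modulo the regular sequence, invoke Corollary \ref{7}, transfer the summand relation back to $R$, and cancel free summands. The paper does the transfer in one step by citing \cite[Lemma 4.2]{fiber} (which gives $\syz_R^t\syz_{R/(\xx)}^jk\cong\syz_R^{t+j}k\oplus\text{free}$ directly), whereas you rederive this by induction on the length of $\xx$; your ``transfer principle'' is precisely the one-variable case of that lemma.

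There are, however, two genuine gaps in your final cancellation step.

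First, the implication ``$\syz^{t+1}_Rk\subseteq\m F_t$, hence it has no free summand'' is false as stated: for $R$ a DVR, $xR\subseteq\m R$ yet $xR\cong R$ is free. The correct fact is that over a \emph{singular} local ring every syzygy of $k$ has no nonzero free summand, which is a nontrivial result of Martsinkovsky \cite[Proposition 7]{Ma}; this is exactly what the paper invokes. You should also dispose of the regular case separately (where $\syz^{t+1}k=\syz^{t+2}k=0$ and the claim is vacuous), as the paper does.

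Second, Krull--Schmidt for finitely generated modules does \emph{not} hold over an arbitrary local ring; it requires $R$ to be Henselian. What you actually need is the weaker cancellation: if $M$ has no free summand and $M$ is a direct summand of $N\oplus R^b$, then $M$ is a direct summand of $N$. This does hold over any local ring (the composite $M\to R^b$ has image in $\m R^b$, so the component $M\to N\to M$ is an automorphism by Nakayama), and the paper cites \cite[Corollaries 1.10 and 1.15(i)]{LW} for it. Replace your Krull--Schmidt appeal with this argument and the proof goes through.
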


\begin{proof}
The first assertion follows from \cite[the proof of Theorem 5.1.1 and Construction 5.1.2]{A}, while the second is shown in \cite[Proposition 5.10]{burch}.
We prove the third assertion.
If $R$ is regular, then $\syz^{t+1}k=\syz^{t+2}k=0$ and the assertion holds.
So, we assume that $R$ is singular.
As $\m$ is quasi-decomposable, there exists an $R$-sequence $\xx=x_1,\dots,x_n$ such that $\m/(\xx)$ is decomposable.
By Corollary \ref{7}, we see that $\syz_{R/(\xx)}k=\m/(\xx)$ is a direct summand of $\syz_{R/(\xx)}^2k$.
Hence $\syz_R^t\syz_{R/(\xx)}k$ is a direct summand of $\syz_R^t\syz_{R/(\xx)}^2k$.
Note that $t\ge n$.
In view of \cite[Lemma 4.2]{fiber}, there are free $R$-modules $F,G$ such that $\syz_R^t\syz_{R/(\xx)}k\cong\syz_R^{t+1}k\oplus F$ and $\syz_R^t\syz_{R/(\xx)}^2k\cong\syz_R^{t+2}k\oplus G$.
Thus $\syz_R^{t+1}k$ is a direct summand of $\syz_R^{t+2}k\oplus G$.
Since $R$ is singular, $\syz_R^{t+1}k$ has no nonzero free summand by \cite[Proposition 7]{Ma}.
By \cite[Corollaries 1.10 and 1.15(i)]{LW}, we observe that $\syz_R^{t+1}k$ is a direct summand of $\syz_R^{t+2}k$.
\end{proof}

\begin{rem}
By \cite[Theorem 4.1]{burch}, if $k$ is a direct summand of $\syz^2k$, then $R$ is Burch.
Hence, the converse of Proposition \ref{10}(2) holds if $t=0$.
In view of \cite[Question 5.11]{burch}, we do not know if this holds even for $t>0$.
On the other hand, Example \ref{44} shows that the converse of Proposition \ref{10}(3) does not hold in general.
\end{rem}

We denote by $\mod_0R$ the subcategory of $\mod R$ consisting of those $R$-modules which are locally free on the punctured spectrum of $R$.
For an $R$-module $M$, we denote by $\nf(M)$ the {\em nonfree locus} of $M$, which is by definition the set of prime ideals $\p$ of $R$ such that the localization $M_\p$ is nonfree over the local ring $R_\p$.
The lemma below is necessary when we state a proof of the main result of this section.

\begin{lem}\label{21}
Let $M$ be an $R$-module.
The following three statements hold true.
\begin{enumerate}[\rm(1)]
\item
Let $X$ be an $R$-module with $X\in[M]_n$ for some integer $n\ge0$.
Then $\depth X\ge\inf\{\depth M,\depth R\}$.
\item
Put $e=\edim R$.
There exists an $R$-module $N\in[M]_{2^e}\cap\mod_0R$ such that $\pd_RN=\pd_RM$.
\item
If $I$ is an ideal of $R$ with $IM=0$, then there is an exact sequence $0\to I^{\oplus\nu_R(M)}\to\syz_RM\to\syz_{R/I}M\to0$.
\end{enumerate}
\end{lem}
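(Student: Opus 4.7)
The plan is to handle the three parts independently, proving (3) by a snake-lemma diagram chase, (1) by induction on $n$, and (2) by induction on $e=\edim R$; part (2) is the main obstacle.

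For (3), since $IM=0$ we have $\nu_R(M)=\nu_{R/I}(M)=:n$, so there is a commutative diagram with exact rows
\[
\begin{CD}
0 @>>> \syz_RM @>>> R^{\oplus n} @>>> M @>>> 0\\
@. @VVV @VVV @| \\
0 @>>> \syz_{R/I}M @>>> (R/I)^{\oplus n} @>>> M @>>> 0
\end{CD}
\]
in which the middle vertical is the canonical surjection with kernel $I^{\oplus n}$. The snake lemma (the right vertical being an isomorphism) identifies $I^{\oplus n}$ with the kernel of the induced map $\syz_RM\to\syz_{R/I}M$, giving the claimed short exact sequence. For (1) I induct on $n$. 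The case $n=0$ is vacuous, and for $n=1$ the standard depth lemma gives $\depth\syz^iL\ge\min\{\depth L+i,\depth R\}\ge\inf\{\depth L,\depth R\}$, while finite direct sums and direct summands preserve the lower bound $\inf\{\depth M,\depth R\}$. For $n\ge2$, by the remark after the definition of $[-]_n$ every $X\in[M]_n$ is a direct summand of some $B$ in a short exact sequence $0\to A\to B\to C\to0$ with $A\in[M]_{n-1}$ and $C\in[M]_1$; the depth lemma gives $\depth B\ge\min\{\depth A,\depth C\}$, and both of these drop to $\inf\{\depth M,\depth R\}$ by the inductive hypothesis.

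For (2), the induction on $e$ starts trivially at $e=0$, where $R$ is a field and the punctured spectrum is empty, so $N:=M\in[M]_1=[M]_{2^0}$ works. For the inductive step I would pick a minimal generator $x\in\m\setminus\m^2$ and, using one $\bullet$-composite (which accounts for the doubling of the subscript), build $N\in[M]_{2\cdot2^{e-1}}=[M]_{2^e}\cap\mod_0R$ out of a module $M'\in[M]_{2^{e-1}}$ whose nonfree locus on the punctured spectrum is already confined to $\V(x)$. The organizing observation is that if $\p\ne\m$ does not contain $x$ then $x$ acts invertibly on $R_\p$, so any quotient-by-$x$ construction vanishes at such $\p$; on the remaining punctured primes inside $\V(x)$ one invokes the inductive hypothesis applied to $R/(x)$, which has embedding dimension $e-1$. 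Concretely, this uses the four-term sequence $0\to(0:_Mx)\to M\xrightarrow{x}M\to M/xM\to0$ split into two short exact pieces and fed into Lemma \ref{15}(1), together with the fact that $R\in[M]_2$ (from the defining sequence $0\to\syz M\to R^{\nu(M)}\to M\to0$), so that $R/(x)$-module constructions can be carried back into $[M]_\star$. The hardest part is not the subscript bookkeeping but preserving $\pd_R$ throughout: one must retain a copy of the original $M$ as a summand of each intermediate module to guarantee $\pd_R N=\pd_R M$ (especially infinite projective dimension), and handle the case where $x$ fails to be $M$-regular without losing control of projective dimension.
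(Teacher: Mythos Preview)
Your arguments for (1) and (3) are correct; (3) is exactly the paper's proof, and (1) is a correct direct argument for what the paper simply cites from \cite{stcm}.

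For (2), the inductive scheme via $R/(x)$ has a genuine gap. The categories $[M]_n$ live in $\mod R$, so an inductive hypothesis applied to $R/(x)$ yields an $R/(x)$-module locally free on the punctured spectrum of $R/(x)$; there is no mechanism to transport this back to an $R$-module lying in $[M]_{2^{e-1}}$ over $R$, and the punctured spectrum of $R/(x)$ does not match the primes of $R$ in $\V(x)\setminus\{\m\}$ in any way that helps. The four-term sequence $0\to(0:_Mx)\to M\xrightarrow{x}M\to M/xM\to0$ also does not produce a module with nonfree locus inside $\V(x)$: localizing at $\p\not\ni x$ kills the outer terms, but extensions you build from the middle pieces need not become free at $\p$. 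Your proposal never actually names a construction that shrinks the nonfree locus.

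The paper's argument never changes the ring and does not induct on $e$; it iterates a single step $e$ times. Fix generators $x_1,\dots,x_e$ of $\m$, set $M_0=M$, and apply \cite[Lemma~7.2]{burch} at each stage: this produces an exact sequence $0\to\syz M_{i-1}\to M_i\to M_{i-1}\to0$ (hence $M_i\in[M_{i-1}]_2$) with the two crucial properties $\nf(M_i)\subseteq\V(x_i)$ and $\pd M_i\ge\pd M_{i-1}$ already established. The extension also forces $\nf(M_i)\subseteq\nf(M_{i-1})$, so after $e$ steps $\nf(M_e)\subseteq\bigcap_i\V(x_i)=\{\m\}$ and $N:=M_e\in[M]_{2^e}\cap\mod_0R$ with $\pd N\ge\pd M$. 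The reverse inequality $\pd N\le\pd M$ then comes from part~(1) together with the Auslander--Buchsbaum formula, which cleanly replaces the summand-retention trick you were reaching for.
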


\begin{proof}
(1) We observe by using \cite[Proposition 1.12(2)]{stcm} that the given inequality is satisfied.

(2) If $M$ is free, we can take $N:=M$.
Assume that $M$ is nonfree.
Choose a system of generators $x_1,\dots,x_e$ of the maximal ideal $\m$ of $R$.
Set $M_0=M$.
By \cite[Lemma 7.2]{burch}, for any $1\le i\le e$ there is an exact sequence $0\to\syz M_{i-1}\to M_i\to M_{i-1}\to0$ such that $\pd M_i\ge\pd M_{i-1}$ and $\nf(M_i)\subseteq\V(x_i)$.
Hence $\pd M_e\ge\pd M$.
The exact sequence shows $M_i\in[M_{i-1}]_2$ and $\nf(M_i)\subseteq\nf(M_{i-1})$ for each $1\le i\le e$.
It is observed that $M_e\in[M]_{2^e}$ and $\nf(M_e)\subseteq\V(x_1,\dots,x_e)=\{\m\}$.
Setting $N=M_e$, we have that $N\in[M]_{2^e}\cap\mod_0R$ and $\pd N\ge\pd M$.
Using (1) and the Auslander--Buchsbaum formula, we get $\pd N\le\pd M$.
Thus $\pd N=\pd M$.

(3) Put $n=\nu_R(M)=\nu_{R/I}(M)$.
There exists a commutative diagram with exact rows
$$
\xymatrix@R-1pc@C5pc{
0\ar[r]& \syz_RM\ar[r]\ar@{.>}[d]& R^{\oplus n}\ar[r]\ar[d]^-{\pi^{\oplus n}}& M\ar[r]\ar@{=}[d]& 0\\
0\ar[r]& \syz_{R/I}M\ar[r]& (R/I)^{\oplus n}\ar[r]& M\ar[r]& 0
}
$$
where $\pi:R\to R/I$ is the natural surjection.
The snake lemma gives rise to a desired exact sequence.
\end{proof}

\begin{rem}
It follows from \cite[Theorem 4.3]{res} that for every nonfree $R$-module $M$ there exists an $R$-module $N$ which is locally free on the punctured spectrum of $R$ and which belongs to the resolving closure of $M$.
We should notice that Lemma \ref{21}(2) provides a refinement of this statement.
\end{rem}

We denote by $\r(R)$ the {\em type} of $R$ and by $\soc R$ the {\em socle} of $R$, respectively, that is, $\r(R)=\dim_k\Ext_R^t(k,R)$ and $\soc R=0:_R\m\cong k^\ast$.
Now we can state and prove the following theorem, which is the main result of this section.
This theorem plays a crucial role in getting an upper bound of the Orlov spectrum of the singularity category in the next section.
In the proof of the theorem it is essential to invoke Theorem \ref{19}.

\begin{thm}\label{9}
Let the local ring $R$ have embedding dimension $e$.
Put $s=1$ when $t=0$, and $s=2^e$ when $t>0$.
Then the following two implications hold true for every $R$-module $M$ of infinite projective dimension.
\begin{align*}
\syz^{t+1}k\in\add(R\oplus\syz^{t+2}k)&\implies\syz^{t+1}k\in[\syz^{t+3}M]_{s(2t+3)}\subseteq[M]_{s(2t+3)},\\
\syz^tk\in\add(R\oplus\syz^{t+2}k)&\implies\syz^{t+1}k\in[\syz^{t+3}M]_{s(2t+4)}\subseteq[M]_{s(2t+4)}.
\end{align*}
\end{thm}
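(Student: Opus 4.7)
The plan is to apply Theorem \ref{19}(1) with $N=k$ and $n=t+1$ to a carefully chosen $(t+1)$-torsionfree syzygy of $M$, then extract $\syz^{t+1}k$ as a direct summand via the identification $\Hom(\tr M',k)\cong k^{\beta_1(M')}$.

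First I would reduce to the case where $M$ is locally free on the punctured spectrum. When $t>0$, Lemma \ref{21}(2) supplies a module $M_0\in[M]_{2^e}\cap\mod_0R$ with $\pd_RM_0=\pd_RM=\infty$; when $t=0$ the punctured spectrum is empty, so this step is vacuous and I simply put $M_0:=M$ (this dichotomy is exactly what the constant $s$ records). In either case, $\syz^jM_0\in[\syz^jM]_s$ for every $j\ge0$, and all Betti numbers $\beta_n(M_0)$ are positive since $\pd_RM_0=\infty$. Put $M':=\syz^{t+1}M_0$. Each $\Ext^i_R(M_0,R)$ with $i\ge1$ has finite length, hence grade at least $t\ge i-1$ for all $1\le i\le t+1$, so \cite[Proposition (2.26)]{AB} makes $M'$ a $(t+1)$-torsionfree module. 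Note $\syz^2M'=\syz^{t+3}M_0$ and $\beta_1(M')=\beta_{t+2}(M_0)>0$.

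For the first implication, Theorem \ref{19}(1) applied to $(M,N,n)=(M',k,t+1)$ yields
$$\syz^{t+1}\Hom(\tr M',k)\in[\syz^2M']_{2t+3}=[\syz^{t+3}M_0]_{2t+3}.$$
Since $\Hom(\tr M',k)\cong k^{\beta_1(M')}$, the left-hand side is isomorphic to $(\syz^{t+1}k)^{\beta_{t+2}(M_0)}$ with $\beta_{t+2}(M_0)>0$, whence $\syz^{t+1}k$ is a direct summand. Composing with the containment $\syz^{t+3}M_0\in[\syz^{t+3}M]_s$ from the reduction step delivers $\syz^{t+1}k\in[\syz^{t+3}M]_{s(2t+3)}$, which is the first implication.

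For the second implication, a single syzygy of $\syz^tk\in\add(R\oplus\syz^{t+2}k)$ gives $\syz^{t+1}k\in\add(\syz^{t+3}k)$, so the task reduces to placing $\syz^{t+3}k$ inside $[\syz^{t+3}M]_{s(2t+4)}$. I would accomplish this by rerunning the same argument with $\syz^2k$ in place of $k$, absorbing one extra syzygy step into the bound to yield the looser constant $s(2t+4)$. The main obstacle I expect is precisely this second implication: the hypothesis $\syz^tk\in\add(R\oplus\syz^{t+2}k)$ does not directly fit the input format ``$\syz^nN\in\add(R\oplus\syz^{n+1}k)$'' of Theorem \ref{19} for any natural $(N,n)$, and this mismatch is what forces the extra syzygy step that loosens the bound from $s(2t+3)$ to $s(2t+4)$.
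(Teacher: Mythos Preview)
Your argument for the first implication is essentially the paper's own proof, and it is correct. One minor slip: when $t=0$ you say ``the punctured spectrum is empty,'' but $t=\depth R$, not $\dim R$. The conclusion survives anyway, since any first syzygy is $1$-torsionfree (the condition $\grade\Ext^1(M,R)\ge 0$ is automatic), which is exactly how the paper handles the case $t=0$.

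The second implication, however, has a genuine gap. You correctly observe that $\syz^tk\in\add(R\oplus\syz^{t+2}k)$ yields $\syz^{t+1}k\in\add(\syz^{t+3}k)$, but your plan to ``rerun with $\syz^2k$ in place of $k$'' and absorb one syzygy does not go through: to invoke Theorem \ref{19}(1) you need $\syz^nN\in\add(R\oplus\syz^{n+1}k)$ for some $1\le n\le t+1$, and there is no natural choice of $(N,n)$ for which the given hypothesis supplies this. Neither does the first implication bootstrap the second, since $\syz^tk\in\add(R\oplus\syz^{t+2}k)$ does not imply $\syz^{t+1}k\in\add(R\oplus\syz^{t+2}k)$.

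The paper's solution is quite different and more elaborate. It chooses an $R$-regular sequence $\xx=x_1,\dots,x_t$ annihilating $\Ext^1(X,\syz X)$ for $X=\syz^{t+1}L$, passes to $\overline R=R/(\xx)$, and considers the module $N=\overline R/\soc\overline R$. From the exact sequence $0\to k^{\oplus r}\to\overline R\to N\to 0$ one deduces $\syz^{t+1}N\in\add(R\oplus\syz^{t+2}k)$ via the hypothesis on $\syz^tk$, so Theorem \ref{19}(1) applies to this $N$ with $n=t+1$. A change-of-rings computation identifies $\syz^{t+1}\Hom(\tr X,N)$ with (a direct sum containing) $\syz^{t+1}\syz_{\overline R/\soc\overline R}^2\overline X$, and a further short exact sequence over $\overline R/\soc\overline R$ produces a copy of $(\syz^{t+1}k)^{\oplus rc}$ sandwiched between two objects already known to lie in $[\syz^2X]_{2t+3}$ and $[\syz^2X]_1$. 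This is where the bound $2t+4=(2t+3)+1$ arises. The idea of manufacturing the module $\overline R/\soc\overline R$ is the missing ingredient in your sketch.
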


\begin{proof}
Lemma \ref{21}(2) gives rise to an $R$-module $K\in[M]_{2^e}\cap\mod_0R$ such that $\pd K=\pd M=\infty$.
As $K$ is locally free on the punctured spectrum, $\Ext^i(K,R)$ has finite length for all $i>0$.
Hence $\grade\Ext^i(K,R)\ge t\ge i-1$ for all $1\le i\le t+1$.
It follows from \cite[Proposition (2.26)]{AB} that $\syz^{t+1}K$ is $(t+1)$-torsionfree.
As $\syz M$ is $1$-torsionfree, $\syz^{t+1}M$ is $(t+1)$-torsionfree when $t=0$.
Setting $L=M$ when $t=0$ and $L=K$ when $t>0$, we have that $L$ is an $R$-module of infinite projective dimension such that $\syz^{t+1}L$ is $(t+1)$-torsionfree.

Let us show the first implication in the theorem.
Applying Theorem \ref{19}(1) to $n:=t+1$ and $N:=k$, we get $\syz^{t+1}\Hom(\tr\syz^{t+1}L,k)\in[\syz^2\syz^{t+1}L]_{2(t+1)+1}=[\syz^{t+3}L]_{2t+3}$.
If $\Hom(\tr\syz^{t+1}L,k)$ is zero, then so is $\tr\syz^{t+1}L$, and $\syz^{t+1}L$ is free by Lemma \ref{1}(4), which implies that $\pd L<\infty$, a contradiction.
Thus $\Hom(\tr\syz^{t+1}L,k)$ is a nonzero $k$-vector space, and $\syz^{t+1}k$ belongs to $[\syz^{t+3}L]_{2t+3}$.
When $t=0$, we have $\syz^{t+1}k\in[\syz^{t+3}L]_{2t+3}=[\syz^{t+3}M]_{2t+3}$.
When $t>0$, we have $\syz^{t+1}k\in[\syz^{t+3}L]_{2t+3}=[\syz^{t+3}K]_{2t+3}\subseteq[\syz^{t+3}M]_{2^e(2t+3)}$.

From here to the end of the proof, we prove the second implication in the theorem holds.
Set $X=\syz^{t+1}L$ and $E=\Ext^1(X,\syz X)$.
We claim that there exists an $R$-regular sequence $\xx=x_1,\dots,x_t$ which annihilates the $R$-module $E$.
In fact, there is nothing to show when $t=0$.
Let $t>0$.
Then $L=K$ is locally free on the punctured spectrum of $R$, which implies that the $R$-module $E$ has finite length.
Thus the claim follows.

Put $\overline{()}=()\otimes_RR/(\xx)$.
It follows from \cite[Lemma 5.1]{fiber} that $\xx$ is an $X$-regular sequence.
We have that
\begin{equation}\label{23}
\textstyle\syz^t\overline X=\syz^t(X/\xx X)\cong\bigoplus_{i=0}^t(\syz^iX)^{\oplus\binom{t}{i}}\in[X]_1
\end{equation}
by \cite[Corollary 3.2(1)]{kos} (or \cite[Proposition 2.2]{stcm}) and \cite[Lemma 2.14]{ua} when $t>0$.
This holds true even when $t=0$; indeed, $\syz^t\overline X=\syz^0X=X\in[X]_1$.
There exists an exact sequence $0\to k^{\oplus r}\to\overline R\to\overline R/\soc\overline R\to0$ of $\overline R$-modules, where $r=\r(R)>0$.
Applying Lemma \ref{15}(1), we get an exact sequence $0\to(\syz^tk)^{\oplus r}\to P\to \syz^t(\overline R/\soc\overline R)\to0$ of $R$-modules with $P$ free.
Hence $(\syz^tk)^{\oplus r}\cong\syz^{t+1}(\overline R/\soc\overline R)\oplus Q$ for some free $R$-module $Q$.
As $\syz^tk$ belongs to $\add(R\oplus\syz^{t+2}k)$, so does the module $\syz^{t+1}(\overline R/\soc\overline R)$.
Applying Theorem \ref{19}(1), we observe that the module $\syz^{t+1}\Hom(\tr X,\overline R/\soc\overline R)$ belongs to $[\syz^2X]_{2t+3}$.
Lemma \ref{1}(3) yields isomorphisms
$$
\overline{\tr X}\cong\tr_{\overline R}\overline X\oplus\overline R^{\oplus a},\qquad
\tr_{\overline R}\overline X/(\soc\overline R)\tr_{\overline R}\overline X\cong\tr_{\overline R/\soc\overline R}(\overline X/(\soc\overline R)\overline X)\oplus(\overline R/\soc\overline R)^{\oplus b},
$$
where $a,b$ are nonnegative integers.
Since the sequence $\xx$ is $R$-regular, it is $\syz^tL$-regular by \cite[Lemma 5.1]{fiber}.
A minimal free resolution $F$ of the $R$-module $L$ induces an exact sequence $0\to X\xrightarrow{\rho}F_t\to\syz^tL\to0$, which induces an exact sequence $0\to\overline X\xrightarrow{\overline\rho}\overline{F_t}\to\overline{\syz^tL}\to0$.
The monomorphism $\overline\rho$ factors through the inclusion map $\m\overline{F_t}\hookrightarrow\overline{F_t}$.
It is observed that the ideal $\soc\overline R$ of $\overline R$ annihilates the $\overline R$-module $\overline X$.
There are isomorphisms
$$
\begin{array}{l}
\overline{\tr X}\otimes_{\overline R}(\overline R/\soc\overline R)
\cong(\tr_{\overline R}\overline X\oplus\overline R^{\oplus a})\otimes_{\overline R}(\overline R/\soc\overline R)
\cong\tr_{\overline R}\overline X/(\soc\overline R)\tr_{\overline R}\overline X\oplus(\overline R/\soc\overline R)^{\oplus a}\\
\phantom{\overline{\tr X}\otimes_{\overline R}(\overline R/\soc\overline R)}\cong\tr_{\overline R/\soc\overline R}(\overline X/(\soc\overline R)\overline X)\oplus(\overline R/\soc\overline R)^{\oplus(a+b)}
\cong\tr_{\overline R/\soc\overline R}\overline X\oplus(\overline R/\soc\overline R)^{\oplus(a+b)}.
\end{array}
$$
Making use of Lemma \ref{1}(2), we have a series of isomorphisms of $R$-modules:
$$
\begin{array}{l}
\syz^{t+1}\Hom(\tr X,\overline R/\soc\overline R)
\cong\syz^{t+1}\Hom_{\overline R}(\overline{\tr X},\overline R/\soc\overline R)\\
\phantom{\syz^{t+1}\Hom(\tr X,\overline R/\soc\overline R)}
\cong\syz^{t+1}\Hom_{\overline R/\soc\overline R}(\overline{\tr X}\otimes_{\overline R}(\overline R/\soc\overline R),\overline R/\soc\overline R)\\
\phantom{\syz^{t+1}\Hom(\tr X,\overline R/\soc\overline R)}\cong\syz^{t+1}\Hom_{\overline R/\soc\overline R}(\tr_{\overline R/\soc\overline R}\overline X\oplus(\overline R/\soc\overline R)^{\oplus(a+b)},\overline R/\soc\overline R)\\
\phantom{\syz^{t+1}\Hom(\tr X,\overline R/\soc\overline R)}\cong\syz^{t+1}\syz_{\overline R/\soc\overline R}^2\overline X\oplus\syz^{t+1}(\overline R/\soc\overline R)^{\oplus(a+b)}.
\end{array}
$$
Hence $\syz^{t+1}\syz_{\overline R/\soc\overline R}^2\overline X$ is a direct summand of $\syz^{t+1}\Hom(\tr X,\overline R/\soc\overline R)$, which belongs to $[\syz^2X]_{2t+3}$.
Thus
\begin{equation}\label{24}
\syz^{t+1}\syz_{\overline R/\soc\overline R}^2\overline X\in[\syz^2X]_{2t+3}.
\end{equation}
Put $c=\nu_{\overline R}(\overline X)$.
We have $c=\nu_R(X)=\beta_{t+1}^R(L)>0$ as $\pd L=\infty$.
From Lemma \ref{21}(3) and the isomorphism $\soc\overline R\cong k^{\oplus r}$, there is an exact sequence $\sigma:0\to k^{\oplus rc}\to\syz_{\overline R}\overline X\to\syz_{\overline R/\soc\overline R}\overline X\to0$.
The inclusion map $\syz_{\overline R}\overline X\hookrightarrow\overline R^{\oplus c}$ factors via $\m\overline R^{\oplus c}$, so that $\syz_{\overline R}\overline X$ is killed by $\soc\overline R$.
Thus $\sigma$ is an exact sequence of $(\overline R/\soc\overline R)$-modules.
Lemma \ref{15}(1) provides an exact sequence $0\to\syz_{\overline R/\soc\overline R}^2\overline X\to k^{\oplus rc}\oplus Y\to\syz_{\overline R}\overline X\to0$ of $(\overline R/\soc\overline R)$-modules with $Y$ free.
Using Lemma \ref{15}(1) again, we obtain an exact sequence of $R$-modules with $Z$ free:
\begin{equation}\label{22}
0\to\syz^{t+1}\syz_{\overline R/\soc\overline R}^2\overline X\to(\syz^{t+1}k)^{\oplus rc}\oplus \syz^{t+1}Y\oplus Z\to\syz^{t+1}\syz_{\overline R}\overline X\to0.
\end{equation}
By \cite[Lemma 4.2]{fiber} we have $\syz^t\syz_{\overline R}\overline X\cong\syz^{t+1}\overline X\oplus R^{\oplus u}$ with $u\in\N$, which implies $\syz^{t+1}\syz_{\overline R}\overline X\cong\syz^{t+2}\overline X$.
By \eqref{23}, $\syz^{t+2}\overline X$ is in $[\syz^2X]_1$.
It is seen from \eqref{24} and \eqref{22} that $\syz^{t+1}k\in[\syz^{t+3}L]_{2t+4}$.
When $t=0$, we have $L=M$ and $\syz^{t+1}k\in[\syz^{t+3}M]_{2t+4}$.
When $t>0$, we have $L=K\in[M]_{2^e}$ and $\syz^{t+1}k\in[\syz^{t+3}M]_{2^e(2t+4)}$.
\end{proof}

To show our next proposition, we establish a lemma which should be well-known.

\begin{lem}\label{18}
The equality $\depth\syz^nk=n$ holds for all integers $0\le n\le t$.
\end{lem}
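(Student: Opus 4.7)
The plan is to prove the equality by induction on $n$, using the depth lemma applied to the canonical short exact sequences arising from the minimal free resolution of $k$. The base case $n=0$ is immediate, since $\syz^0k=k$ and $\depth_Rk=0$.

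For the inductive step, I fix $1\le n\le t$ and assume $\depth\syz^{n-1}k=n-1$. A minimal free resolution of $k$ yields a short exact sequence
\[
0\to\syz^nk\to R^{\oplus\beta_{n-1}(k)}\to\syz^{n-1}k\to0,
\]
whose middle term is free, hence has depth equal to $t=\depth R$. The depth lemma then gives two inequalities. The first,
\[
\depth\syz^nk\ge\min\{\depth R,\,\depth\syz^{n-1}k+1\}=\min\{t,n\}=n,
\]
uses the hypothesis $n\le t$. The second,
\[
\depth\syz^{n-1}k\ge\min\{\depth\syz^nk-1,\,\depth R\},
\]
combined with $\depth\syz^{n-1}k=n-1<t$, forces $\depth\syz^nk-1\le n-1$, and hence $\depth\syz^nk\le n$. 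Combining the two bounds yields the desired equality $\depth\syz^nk=n$.

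The argument is short and the only point requiring any care is making sure the inductive hypothesis $n-1<t$ forces the minimum on the right-hand side of the second inequality to be attained at $\depth\syz^nk-1$ rather than at $t$; this is exactly where the restriction $n\le t$ is used. No step should pose a real obstacle.
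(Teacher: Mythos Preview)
Your proof is correct. Both your argument and the paper's obtain the lower bound $\depth\syz^nk\ge n$ from the depth lemma in the same way, but the upper bound is handled differently. You proceed by induction, feeding the inductive hypothesis $\depth\syz^{n-1}k=n-1$ into the third inequality of the depth lemma to force $\depth\syz^nk\le n$. The paper instead argues directly (without induction) that $\Ext^n(k,\syz^nk)\ne0$: dimension shifting gives $\Ext^n(k,\syz^nk)\cong\Ext^1(\syz^{n-1}k,\syz^nk)$, and this last group is nonzero because the syzygy sequence $0\to\syz^nk\to F\to\syz^{n-1}k\to0$ coming from a minimal free resolution does not split. Your approach has the virtue of using nothing beyond the depth lemma itself, while the paper's avoids setting up an induction; both are short and entirely elementary.
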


\begin{proof}
The depth lemma yields $\depth\syz^nk\ge\inf\{n+\depth k,\depth R\}=\inf\{n,t\}=n$, so it suffices to show $\Ext^n(k,\syz^nk)\ne0$.
If $n=0$, then $\Ext^n(k,\syz^nk)\cong\Hom(k,k)=k\ne0$.
If $n>0$, then there is a nonsplit exact sequence $0\to\syz^nk\to F\to\syz^{n-1}k\to0$ with $F$ free, whence $\Ext^n(k,\syz^nk)\cong\Ext^1(\syz^{n-1}k,\syz^nk)\ne0$.
\end{proof}

The conditions $\syz^{t+1}k\in\add(R\oplus\syz^{t+2}k)$ and $\syz^tk\in\add(R\oplus\syz^{t+2}k)$ in Theorem \ref{9} might look too specific or artificial.
The following proposition explains that those two conditions are rather reasonable.

\begin{prop}
The following two conditions are equivalent to each other.
\begin{enumerate}[\rm(1)]
\item
There exist integers $1\le n\le t+1$ and $0\le m\le n$ such that $\syz^mk\in\add(R\oplus\syz^{n+1}k)$.
\item
One has either that $\syz^{t+1}k\in\add(R\oplus\syz^{t+2}k)$ or that $\syz^tk\in\add(R\oplus\syz^{t+2}k)$.
\end{enumerate}
\end{prop}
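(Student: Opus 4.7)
The plan is to dispatch $(2)\Rightarrow(1)$ by direct observation and to reduce $(1)\Rightarrow(2)$ to a depth comparison. For $(2)\Rightarrow(1)$, one simply sets $n=t+1$ and takes $m\in\{t,t+1\}$, so that the hypothesis of $(1)$ reads verbatim as one of the two alternatives in $(2)$.

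For the content-carrying direction $(1)\Rightarrow(2)$, suppose $\syz^m k$ is a direct summand of $R^{\oplus b}\oplus(\syz^{n+1}k)^{\oplus a}$ with $1\le n\le t+1$ and $0\le m\le n$. The first step will be to apply the syzygy operator $\syz^{t+1-n}$ to this direct-sum relation. Since minimal free resolutions of direct summands assemble into a minimal free resolution of the whole, this operation preserves the direct-summand relation, and using $\syz^i R=0$ for all $i\ge 1$ together with $\syz^{t+1-n}(\syz^{n+1}k)=\syz^{t+2}k$ I obtain
$$
\syz^j k\in\add(R\oplus\syz^{t+2}k),\qquad\text{where }j=m+t+1-n.
$$
When $n=t+1$ this simply restates the hypothesis; when $n\le t$ the free part drops out entirely and one even lands in $\add(\syz^{t+2}k)$. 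The range of $j$ is $t+1-n\le j\le t+1$, and if $j\in\{t,t+1\}$ the conclusion $(2)$ follows at once.

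The second step will handle the remaining range $j\le t-1$ (which forces $t\ge 1$) by a depth calculation. By Lemma \ref{18} one has $\depth\syz^j k=j\le t-1$. On the other hand, iterating the depth lemma along a minimal free resolution of $k$ yields $\depth\syz^{t+2}k\ge t$, while free modules have depth $t$; hence every nonzero object of $\add(R\oplus\syz^{t+2}k)$ has depth at least $t$, using that the depth of a direct summand is bounded below by the depth of the ambient module. This contradicts $\depth\syz^j k=j<t$ unless $\syz^j k=0$; but in that case $\pd_R k\le j-1<\infty$, so $R$ is regular, and then $\syz^{t+1}k=\syz^{t+2}k=0$ and $(2)$ holds tautologically.

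The main obstacle is essentially bookkeeping at the margins: one must verify that each degenerate configuration ($a=0$, $b=0$, $\syz^{t+2}k=0$, or $\syz^j k=0$) collapses to the regular case, where $(2)$ is trivially satisfied. Beyond this, the argument is a purely formal manipulation of the syzygy operator and the additive closure, and no input should be needed beyond Lemma \ref{18} and the depth lemma.
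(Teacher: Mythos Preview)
Your proof is correct and uses essentially the same approach as the paper: both hinge on Lemma \ref{18} together with a depth comparison to rule out small syzygy indices. The only difference is organizational---you apply $\syz^{t+1-n}$ upfront to normalize to $n=t+1$ and then argue that $j\in\{t,t+1\}$ (or $R$ is regular), whereas the paper works directly with the original pair $(n,m)$, constrains it via a single depth inequality chain to $(n,m)\in\{(t+1,t+1),(t+1,t),(t,t)\}$, and handles the last case by applying $\syz$ once at the end.
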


\begin{proof}
It is obvious that (2) implies (1).
Let us show that (1) implies (2). 
We first consider the case $m>t$.
Then $t+1\le m\le n\le t+1$, and $m=n=t+1$.
Hence $\syz^{t+1}k\in\add(R\oplus\syz^{t+2}k)$.
Next we consider the case $m\le t$.
There exists an integer $r\ge0$ such that $\syz^mk$ is a direct summand of $(R\oplus\syz^{n+1}k)^{\oplus r}$.
It holds that
$$
\begin{array}{l}
n\ge m=\depth\syz^mk\ge\depth(R\oplus\syz^{n+1}k)^{\oplus r}\ge\depth(R\oplus\syz^{n+1}k)\\
\phantom{n\ge m=\depth\syz^mk}=\inf\{\depth R,\depth\syz^{n+1}k\}\ge\inf\{t,\inf\{n+1,t\}\}=\inf\{t,n+1\}
\end{array}
$$
by Lemma \ref{18} and the depth lemma.
This forces us to have $t<n+1$, and $t+1\ge n\ge m\ge\inf\{t,n+1\}=t$.
Hence $(n,m)=(t+1,t+1),(t+1,t),(t,t)$.
We have $\syz^{t+1}k\in\add(R\oplus\syz^{t+2}k)$, or $\syz^tk\in\add(R\oplus\syz^{t+2}k)$, or $\syz^tk\in\add(R\oplus\syz^{t+1}k)$.
It remains to note that if $\syz^tk\in\add(R\oplus\syz^{t+1}k)$, then $\syz^{t+1}k\in\add(R\oplus\syz^{t+2}k)$.
\end{proof}

%%%%%%%%%%%%%%%%%%%%%%%%%%%%%%%%%
\section{Orlov spectra over uniformly dominant isolated singularities}

In this section, using the theorem obtained in the previous section, we shall investigate generation in the singularity category.
We start by recalling the definitions of several notions including the Orlov spectrum and the ultimate dimension of a triangulated category, which are the main targets of this section.

\begin{dfn}
Let $\T$ be a triangulated category.
\begin{enumerate}[(1)]
\item
Let $\X,\Y$ be subcategories of $\T$.
We denote by $\langle\X\rangle$ the smallest subcategory of $\T$ which contains $\X$ and which is closed under finite direct sums, direct summands and shifts.
We denote by $\X\ast\Y$ the subcategory of $\T$ which consists of objects $T\in\T$ such that there exists an exact triangle $X\to T\to Y\rightsquigarrow$ in $\T$ with $X\in\X$ and $Y\in\Y$.
We put $\X\diamond\Y=\langle\X\ast\Y\rangle$.
We set $\langle\X\rangle_0^\T=0$, and $\langle\X\rangle_n^\T=\langle\X\rangle_{n-1}^\T\diamond\langle\X\rangle^\T$ for $n\ge1$.
\item
Let $X,Y\in\T$.
Put $\level_\T^X(Y)=\inf\{n\in\ZZ_{\ge-1}\mid Y\in\langle X\rangle_{n+1}\}$, $\gt_\T(X)=\inf\{n\in\ZZ_{\ge-1}\mid\langle X\rangle_{n+1}=\T\}$, $
\ospec\T=\{\gt_\T(G)\mid G\in\T\text{ with }\gt_\T(G)<\infty\}$, $\dim\T=\inf(\ospec\T)$, and $\udim\T=\sup(\ospec\T)$.
These are called the {\em level} of $Y$ with respect to $X$, the {\em generation time} of $X$, the {\em Orlov spectrum} of $\T$, the {\em (Rouquier) dimension} of $\T$, and the {\em ultimate dimension} of $\T$, respectively.
\end{enumerate}
\end{dfn}

\begin{rem}
The definitions of a level and a generation time in this paper are slightly modified from the original ones.
To be more precise, $\level_\T^X(Y)$ is defined as $\inf\{n\in\N\mid Y\in\langle X\rangle_n\}$ in \cite{ABIM}, while it is defined as $\inf\{n\in\N\mid Y\in\langle X\rangle_{n+1}\}$ in \cite{BFK}.
Also, $\gt_\T(X)$ is defined to be $\inf\{n\in\N\mid\langle X\rangle_{n+1}=\T\}$ in \cite{BFK}.
Note that our definitions of a level and a generation time do coincide with the definitions given in \cite{BFK} whenever $\T\ne0$. 
\end{rem}

Now, after recalling the definition of a singularity category, we shall introduce the notions of a dominant index and a uniformly dominant local ring, which play a primary role in this section.

\begin{dfn}
We denote by $\db(R)$ the bounded derived category of $\mod R$.
Let $\ds(R)$ stand for the {\em singularity category} of $R$, which is defined as the Verdier quotient of $\db(R)$ by the perfect complexes.
We set
$$
\dx(R)=\inf\{n\in\ZZ_{\ge-1}\mid\text{$k\in\langle X\rangle_{n+1}^{\ds(R)}$ for every $0\ne X\in\ds(R)$}\}\in\ZZ_{\ge-1}\cup\{\infty\}
$$
and call this the {\em dominant index} of $R$.
We say that $R$ is {\em uniformly dominant} if $\dx(R)<\infty$.
\end{dfn}

\begin{rem}
\begin{enumerate}[(1)]
\item
If $R$ is a regular local ring, then $R$ is uniformly dominant with dominant index $-1$.
In fact, $\ds(R)=0$ and there exists no such object $0\ne X\in\ds(R)$ (or $k\cong0\in\langle X\rangle_0$ for all $X\in\ds(R)$).
\item
It is evident by definition that every uniformly dominant local ring is {\em dominant} in the sense of \cite{dlr}.
\end{enumerate}
\end{rem}

By virtue of Theorem \ref{9}, we obtain sufficient conditions for a local ring to be uniformly dominant.

\begin{cor}\label{40}
Let $e$ denote the embedding dimension of $R$.
Put $s=1$ when $t=0$, and $s=2^e$ when $t>0$.
\begin{enumerate}[\rm(1)]
\item
Suppose that $\syz^{t+1}k$ belongs to $\add(R\oplus\syz^{t+2}k)$; this condition is satisfied if the local ring $R$ has quasi-decomposable maximal ideal.
Then one has $\dx(R)\le s(2t+3)-1$.
In particular, $R$ is uniformly dominant.
\item
Suppose that $\syz^tk$ belongs to $\add(R\oplus\syz^{t+2}k)$; this condition is satisfied if $R$ is a singular Burch ring (e.g., if $R$ is a singular hypersurface).
Then $\dx(R)\le s(2t+4)-1$.
In particular, $R$ is uniformly dominant.
\end{enumerate}
\end{cor}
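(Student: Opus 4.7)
The plan is to transfer the module-theoretic generation statements of Theorem \ref{9} into statements about triangulated generation in $\ds(R)$. First I would establish a translation principle: whenever $N \in [\X]_n$ as subcategories of $\mod R$, one also has $N \in \langle \X \rangle_n^{\ds(R)}$ when modules are viewed as stalk complexes. Three ingredients underlie this. Direct sums and summands behave the same way in both settings. A syzygy becomes a shift in $\ds(R)$: the exact sequence $0\to\syz N\to F\to N\to 0$ with $F$ free makes $F$ zero in $\ds(R)$, so the induced exact triangle gives $N\cong(\syz N)[1]$. Finally, each short exact sequence of modules yields an exact triangle in $\db(R)$, which passes to $\ds(R)$, so the module operation $\circ$ refines the triangulated operation $\ast$, and consequently $\X\bullet\Y$ is absorbed by $\langle\X\rangle\diamond\langle\Y\rangle$. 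Inducting on $n$ gives the desired containment $[\X]_n\subseteq\langle\X\rangle_n^{\ds(R)}$.

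Next I would reduce a general nonzero object of $\ds(R)$ to a single module of infinite projective dimension. For $0\ne X\in\ds(R)$ I may replace $X$ by a suitable truncation of a free resolution and take a sufficiently high syzygy, obtaining an $R$-module $M$ with $X\cong M[i]$ in $\ds(R)$ for some $i\in\ZZ$. The assumption $X\ne0$ in $\ds(R)$ forces $\pd_RM=\infty$, since a module of finite projective dimension is perfect and hence zero in $\ds(R)$.

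Now the proof is routine. Given $0\ne X\in\ds(R)$ choose such an $M$; since shifting does not affect the level, $\langle X\rangle_n^{\ds(R)}=\langle M\rangle_n^{\ds(R)}$. For part (1), Theorem \ref{9} provides $\syz^{t+1}k\in[M]_{s(2t+3)}$, which by the translation principle yields $\syz^{t+1}k\in\langle M\rangle_{s(2t+3)}^{\ds(R)}$. Since $\syz^{t+1}k\cong k[t+1]$ in $\ds(R)$, the residue field itself lies in $\langle X\rangle_{s(2t+3)}^{\ds(R)}$, giving $\dx(R)\le s(2t+3)-1$. Part (2) is identical, using the second implication of Theorem \ref{9} to obtain the bound $s(2t+4)-1$. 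The supplementary implications follow from Proposition \ref{10}: its parts (3) and (2) respectively verify the hypothesis of (1) for quasi-decomposable $\m$ and of (2) for singular Burch $R$, while part (1) of Proposition \ref{10} subsumes the singular hypersurface case into the Burch case.

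The main obstacle I anticipate is not any single step but the bookkeeping in the translation $[\X]_n\rightsquigarrow\langle\X\rangle_n^{\ds(R)}$, in particular verifying that syzygy closure in $\mod R$ is genuinely absorbed by shift closure in $\ds(R)$ with matching indices, and that the module extension operation $\bullet$ is absorbed by $\diamond$ at the same stage $n$. Once that dictionary is in place, the corollary is a direct consequence of Theorem \ref{9} and Proposition \ref{10}.
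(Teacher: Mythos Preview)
Your proposal is correct and follows essentially the same route as the paper: represent a nonzero object of $\ds(R)$ as a shift of a module $M$ of infinite projective dimension, apply Theorem \ref{9} to get $\syz^{t+1}k\in[M]_n$, pass to $\langle M\rangle_n^{\ds(R)}$ via the translation principle you describe, and then use the syzygy--shift isomorphism and Proposition \ref{10}. The only slip is the direction of the shift (one has $k\cong(\syz^{t+1}k)[t+1]$, not $\syz^{t+1}k\cong k[t+1]$), which is harmless since $\langle X\rangle_n$ is closed under all shifts.
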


\begin{proof}
Fix a nonzero object $X$ of $\ds(R)$.
Then $X\cong M[r]$ in $\ds(R)$ for some $R$-module $M$ and some integer $r$; see \cite[Lemma 2.4(2a)]{sing} for instance.
Since $X$ is nonzero in $\ds(R)$, the $R$-module $M$ has infinite projective dimension.
Suppose that $\syz^{t+1}k$ belongs to $\add(R\oplus\syz^{t+2}k)$.
Then it follows from Theorem \ref{9} that $\syz^{t+1}k$ belongs to $[M]_{s(2t+3)}$.
This implies that in $\ds(R)$ the object $\syz^{t+1}k$ belongs to $\langle M\rangle_{s(2t+3)}=\langle X\rangle_{s(2t+3)}$.
Since $k\cong(\syz^{t+1}k)[t+1]$ in $\ds(R)$ by \cite[Lemma 2.4(2b)]{sing}, we see that $k$ belongs to $\langle X\rangle_{s(2t+3)}$.
The inequality $\dx(R)\le s(2t+3)-1$ is obtained.
In a similar way, we see that if $\syz^tk$ is in $\add(R\oplus\syz^{t+2}k)$, then the inequality $\dx(R)\le s(2t+4)-1$ holds.
The proof of the corollary is completed by invoking Proposition \ref{10}.
\end{proof}

The example below says that the converse of each assertion of Corollary \ref{40} is not true in general.

\begin{ex}\label{45}
We denote the length of a module $M$ over a ring $R$ by $\ell_R(M)$.
Let $k$ be a field.
\begin{enumerate}[(1)]
\item
The artinian local ring $R=k[x,y]/(x^3,x^2y,xy^2,y^3)$ is Burch by \cite[Corollary 6.5]{burch}, and so it is uniformly dominant by Corollary \ref{40}(2).
However, $\syz^{t+1}k$ is not in $\add(R\oplus\syz^{t+2}k)$.
In fact, assume that it does.
As $R$ is a complete local ring, $\m$ is a nonfree indecomposable $R$-module and $t=0$, we see by \cite[Corollaries 1.10, 1.15]{LW} and \cite[Proposition 7]{Ma} that $\m$ is a direct summand of $\syz^2k$.
There is an $R$-module $L$ with $\syz^2k\cong\m\oplus L$.
The exact sequence $0\to\syz^2k\to R^{\oplus2}\to\m\to0$ shows that $\ell_R(\syz^2k)=2\ell_R(R)-\ell_R(\m)=2\cdot6-5=7$, so that $\nu_R(L)\le\ell_R(L)=\ell_R(\syz^2k)-\ell_R(\m)=7-5=2$.
Therefore, $\beta_2^R(k)=\nu_R(\syz^2k)=\nu_R(\m)+\nu_R(L)\le2+2=4$.
This contradicts the fact that a minimal free resolution of $k$ has the form below, which shows $\beta_2^R(k)=5$.
$$
0\gets k\gets R\xleftarrow{(x,y)}R^{\oplus2}\xleftarrow{\left(\begin{smallmatrix}x^2&0&0&0&y\\0&x^2&xy&y^2&-x\end{smallmatrix}\right)}R^{\oplus5}\gets\cdots
$$
\item
The artinian local ring $R=k[x,y,z,w]/(x^2,y^2,z^2,w^2,xz,xw,yz,yw)$ is not Burch but has decomposable maximal ideal by \cite[Theorem 9.10]{dlr}.
It follows from Corollary \ref{40}(1) that the local ring $R$ is uniformly dominant.
However, we easily see from \cite[Theorem 4.1]{burch} that $\syz^tk$ does not belong to $\add(R\oplus\syz^{t+2}k)$.
\end{enumerate}
\end{ex}

We study generation of the singularity category by using a dominant index, a level and a generation time.

\begin{prop}\label{30}
Let $X$ be an object of $\ds(R)$.
The following two statements hold true.
\begin{enumerate}[\rm(1)]
\item
Assume that $n=\dx(R)$, $v=\level^k(X)$ and $g=\gt(X)$ are all finite.
Then $\ds(R)=\langle G\rangle_{(n+1)(v+1)(g+1)}$ for each $0\ne G\in\ds(R)$.
In particular, every nonzero object of $\ds(R)$ has finite generation time.
\item
There is an inequality $\udim\ds(R)\le(\dx(R)+1)(\level_{\ds(R)}^k(X)+1)(\gt_{\ds(R)}(X)+1)-1$.
\end{enumerate}
\end{prop}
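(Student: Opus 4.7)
The plan is to deduce part (1) by chaining three containments using the three finite quantities $n$, $v$, $g$ in sequence, and then to obtain part (2) as an immediate consequence.

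First I would fix an arbitrary nonzero object $G$ of $\ds(R)$. The inequality $\dx(R)\le n$ means, by definition, that $k\in\langle G\rangle_{n+1}^{\ds(R)}$. Next, from $\level_{\ds(R)}^k(X)=v$ we get $X\in\langle k\rangle_{v+1}^{\ds(R)}$. I would then combine these two containments via the standard multiplicativity property of the operation $\langle-\rangle_i$ in a triangulated category, namely that if $\Y\subseteq\langle\X\rangle_a$ then $\langle\Y\rangle_b\subseteq\langle\X\rangle_{ab}$ (this is an elementary consequence of the definition $\langle\X\rangle_i=\langle\X\rangle_{i-1}\diamond\langle\X\rangle$ together with the fact that the $\diamond$ operation is associative up to closure under summands and shifts, i.e.\ $\langle\X\rangle_a\diamond\langle\X\rangle_b\subseteq\langle\X\rangle_{a+b}$). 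Applying this with $\Y=\{k\}$ and $\X=\{G\}$ gives $X\in\langle k\rangle_{v+1}\subseteq\langle G\rangle_{(n+1)(v+1)}$. Finally, $\gt_{\ds(R)}(X)=g$ says $\ds(R)=\langle X\rangle_{g+1}$, so applying the same multiplicativity once more yields
\[
\ds(R)=\langle X\rangle_{g+1}\subseteq\big\langle\langle G\rangle_{(n+1)(v+1)}\big\rangle_{g+1}\subseteq\langle G\rangle_{(n+1)(v+1)(g+1)}\subseteq\ds(R).
\]
This forces equality, which is the statement of (1); in particular $\gt_{\ds(R)}(G)\le(n+1)(v+1)(g+1)-1<\infty$.

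For (2), I would observe that if any of the three quantities $\dx(R)$, $\level_{\ds(R)}^k(X)$, $\gt_{\ds(R)}(X)$ is infinite, the asserted inequality is vacuous. Otherwise, (1) shows that every nonzero $G\in\ds(R)$ has finite generation time bounded above by $(n+1)(v+1)(g+1)-1$, so taking the supremum over $\ospec\ds(R)$ (which consists only of finite generation times by its definition) produces exactly the claimed bound on $\udim\ds(R)$.

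I do not expect a genuine obstacle here: once the multiplicativity lemma $\langle\langle\X\rangle_a\rangle_b\subseteq\langle\X\rangle_{ab}$ is in hand, both parts are essentially bookkeeping. The only point deserving a brief justification in the write-up is that lemma itself, and one should also take care of the convention $\langle\X\rangle_0=0$ so that the base cases line up; otherwise the argument is a direct three-step chase through the definitions of dominant index, level, and generation time.
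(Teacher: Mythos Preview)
Your argument is correct and follows essentially the same route as the paper's proof: both chain the three containments $k\in\langle G\rangle_{n+1}$, $X\in\langle k\rangle_{v+1}$, $\ds(R)=\langle X\rangle_{g+1}$ via the multiplicativity $\langle\langle\X\rangle_a\rangle_b\subseteq\langle\X\rangle_{ab}$, differing only in the order the three are combined. The one small point the paper makes explicit for (2) is the degenerate case $\ds(R)=0$ (i.e.\ $R$ regular), where there is no nonzero $G$ to feed into (1); you might add a word noting that then $\udim\ds(R)=-1$ and the inequality holds trivially.
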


\begin{proof}
(1) By definition, we have $X\in\langle k\rangle_{v+1}$ and $\langle X\rangle_{g+1}=\ds(R)$.
Since $G$ is nonzero, we get $k\in\langle G\rangle_{n+1}$.
It holds that $\ds(R)=\langle X\rangle_{g+1}\subseteq\langle k\rangle_{(v+1)(g+1)}\subseteq\langle G\rangle_{(n+1)(v+1)(g+1)}$.
We obtain $\ds(R)=\langle G\rangle_{(n+1)(v+1)(g+1)}$.

(2) If $R$ is regular, then $\ds(R)=0$, $\udim\ds(R)=-1$ and the inequality evidently holds.
We may assume that $R$ is singular, and that $n=\dx(R)$, $v=\level^k(X)$ and $g=\gt(X)$ are finite.
Let $G\in\ds(R)$ with $\gt(G)<\infty$.
Then $G\ne0$.
By (1) we get $\ds(R)=\langle G\rangle_{(n+1)(v+1)(g+1)}$.
Thus $\udim\ds(R)\le(n+1)(v+1)(g+1)-1$.
\end{proof}

As an application of the above proposition, we get information on upper and lower bounds of the Orlov spectrum of the singularity category of a uniformly dominant isolated singularity.

\begin{cor}
Assume that the local ring $R$ is uniformly dominant and has an isolated singularity.
If the singularity category $\ds(R)$ has finite Rouquier dimension, then it has finite ultimate dimension.
\end{cor}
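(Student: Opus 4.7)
The plan is to derive the corollary directly from Proposition \ref{30}(2), which, for any object $X\in\ds(R)$, bounds $\udim\ds(R)$ by a function of $\dx(R)$, $\level_{\ds(R)}^k(X)$, and $\gt_{\ds(R)}(X)$. Uniform dominance of $R$ gives $\dx(R)<\infty$ by definition, and the assumption that $\ds(R)$ has finite Rouquier dimension says exactly that there is some classical generator $X\in\ds(R)$ with $\gt_{\ds(R)}(X)<\infty$. It therefore suffices to choose such an $X$ for which also $\level_{\ds(R)}^k(X)<\infty$.

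For this I would appeal to the standard fact that, when $R$ has isolated singularity, the thick subcategory of $\ds(R)$ generated by the residue field exhausts the singularity category, that is, $\thick_{\ds(R)}(k)=\ds(R)$. (This goes back to work of Schoutens and Takahashi; at its core one uses that any $M\in\mod R$ is locally free on the punctured spectrum, so that after applying a syzygy and filtering by a power of $\m$ one can build a suitable representative of $M$ in $\ds(R)$ from $k$ in finitely many steps.) Granting this, every object of $\ds(R)$---in particular our classical generator $X$---has finite $k$-level, so $\level_{\ds(R)}^k(X)<\infty$.

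With $\dx(R)$, $\level_{\ds(R)}^k(X)$, and $\gt_{\ds(R)}(X)$ all finite, Proposition \ref{30}(2) immediately yields
\[
\udim\ds(R)\le(\dx(R)+1)\bigl(\level_{\ds(R)}^k(X)+1\bigr)\bigl(\gt_{\ds(R)}(X)+1\bigr)-1<\infty,
\]
as desired. The main obstacle in this plan is the input $\thick_{\ds(R)}(k)=\ds(R)$ under the isolated singularity hypothesis; the rest is a routine bookkeeping application of Proposition \ref{30}(2). An alternative, essentially equivalent, route is to apply Proposition \ref{30}(2) with $X=k$, in which case $\level_{\ds(R)}^k(k)=0$ trivially and one instead has to verify $\gt_{\ds(R)}(k)<\infty$; this again reduces to the same thick-generation fact, since a classical generator of $\ds(R)$ will lie in $\thick_{\ds(R)}(k)$ and hence allow one to conclude $\ds(R)=\langle k\rangle_m$ for some finite $m$.
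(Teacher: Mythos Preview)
Your proposal is correct and follows essentially the same approach as the paper: pick an $X$ realizing the finite Rouquier dimension, use uniform dominance for $\dx(R)<\infty$, invoke the isolated-singularity fact $\ds(R)=\thick_{\ds(R)}(k)$ to get $\level^k(X)<\infty$, and apply Proposition~\ref{30}(2). The paper cites \cite[Corollary 4.3(2)]{kos} for the thick-generation input (rather than Schoutens), but otherwise the argument is the same.
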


\begin{proof}
By assumption, $g:=\dim\ds(R)$ is finite.
We find $X\in\ds(R)$ with $g=\gt(X)$.
Since $R$ is uniformly dominant, $n:=\dx(R)$ is finite.
As $R$ has an isolated singularity, $\ds(R)=\thick k$ by \cite[Corollary 4.3(2)]{kos}.
Hence $v:=\level^k(X)$ is finite.
Proposition \ref{30}(2) yields that $\udim\ds(R)\le(n+1)(v+1)(g+1)-1<\infty$.
\end{proof}

Here we need to recall the definition of the annihilator of the singularity category of the local ring $R$.

\begin{dfn}
We denote by $\ann_R\ds(R)$ the {\em annihilator} of $\ds(R)$, which is by definition the set of elements $a$ of $R$ such that the multiplication morphism $X\xrightarrow{a}X$ in $\ds(R)$ is zero for all $X\in\ds(R)$.
\end{dfn}

Under some mild assumptions, we can prove that the ultimate dimension of the singularity category of a uniformly dominant isolated singularity is finite, and actually we can get an explicit upper bound for it.

\begin{cor}\label{28}
Let $R$ be an excellent equicharacteristic uniformly dominant local ring which has an isolated singularity.
Let $J$ be an $\m$-primary ideal of $R$ which is contained in $\ann\ds(R)$.
Put $n=\dx(R)$, $m=\nu(J)$ and $l=\ell\ell(R/J)$.
Then one has $\ds(R)=\langle G\rangle_{(n+1)(m-t+1)l}$ for each $0\ne G\in\ds(R)$.
In particular, every nonzero object of $\ds(R)$ has finite generation time, and it holds that $\udim\ds(R)\le(n+1)(m-t+1)l-1$.
\end{cor}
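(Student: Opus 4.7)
The plan is to derive Corollary \ref{28} from Proposition \ref{30}(1) together with a bound on the generation time $\gt_{\ds(R)}(k)$. Applying Proposition \ref{30}(1) with the choice $X := k$---a nonzero object of $\ds(R)$ in the nontrivial case---we have $\level_{\ds(R)}^k(k) = 0$, so the conclusion of that proposition collapses to
\[
\ds(R) = \langle G\rangle_{(n+1)(\gt_{\ds(R)}(k)+1)}^{\ds(R)}
\]
for every $0\ne G\in\ds(R)$. Thus it suffices to prove
\[
\gt_{\ds(R)}(k)\le (m-t+1)l-1,\qquad\text{equivalently}\qquad\langle k\rangle_{(m-t+1)l}^{\ds(R)}=\ds(R).
\]

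To establish this I would use a standard Koszul-complex argument. Fix $X\in\ds(R)$ and, as in the proof of Corollary \ref{40}, use \cite[Lemma 2.4(2a)]{sing} to represent $X$ up to shift by an $R$-module; after taking a $t$th syzygy (which in $\ds(R)$ is just a shift) we may assume $X$ is represented by $M=\syz^tN$ with $\depth M\ge t$. Using that $J$ is $\m$-primary, that $R$ has isolated singularity, and prime avoidance applied to the non-maximal associated primes of $M$, choose a minimal generating system $f_1,\ldots,f_m$ of $J$ such that $f_1,\ldots,f_t$ is $M$-regular. Since $J\subseteq\ann_R\ds(R)$, multiplication by $f_i$ on $X$ vanishes in $\ds(R)$, so the Koszul triangle $X\xrightarrow{f_i}X\to X\lten_R K(f_i;R)\rightsquigarrow$ splits and $X$ is a direct summand of $X\lten_R K(f_i;R)$; iterating, $X$ is a direct summand of $X\lten_R K(f_1,\ldots,f_m;R)$. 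The $M$-regularity of $f_1,\ldots,f_t$ makes this last complex quasi-isomorphic to $(M/(f_1,\ldots,f_t)M)\lten_R K(f_{t+1},\ldots,f_m;R)$, a bounded complex whose cohomology is concentrated in $m-t+1$ consecutive degrees and lies in $\mod(R/J)$. Hence $X\in\langle \mod(R/J)\rangle_{m-t+1}^{\ds(R)}$. Finally, since $\ell\ell(R/J)=l$, every $R/J$-module belongs to $\langle k\rangle_l^{\ds(R)}$ via its ascending Loewy filtration, so $X\in\langle k\rangle_{(m-t+1)l}^{\ds(R)}$. The remaining assertions of the corollary are immediate from the displayed equality.

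I expect the main obstacle to be the depth-saving step: arranging a regular $t$-subsequence as the initial segment of a minimal generating system of $J$, which requires using excellence and the isolated-singularity hypothesis (via prime avoidance on the non-maximal associated primes of $M$) to secure both $M$-regularity and $\m J$-linear independence of $f_1,\ldots,f_t$. Once this is in place, the collapse of the Koszul cohomology from $m+1$ to $m-t+1$ degrees, combined with the annihilator-summand principle and the Loewy-length bound for $R/J$-modules, should yield the desired estimate cleanly.
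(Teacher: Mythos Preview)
Your proposal is correct and follows the paper's approach exactly: apply Proposition~\ref{30} with $X=k$, so that $\level^k(k)=0$ and everything reduces to the estimate $\ds(R)=\langle k\rangle_{(m-t+1)l}$. The paper obtains this estimate by a direct citation of \cite[Theorem~1.3(2)]{L}, whereas you supply the Koszul-complex argument underlying that result; these are the same proof at different levels of detail. One minor point: the prime-avoidance step you flag as the main obstacle does not actually require excellence or the isolated-singularity hypothesis---it only needs that $J$ is $\m$-primary (hence contained in no non-maximal prime) together with $\depth M\ge t$, and standard prime avoidance allowing one non-prime ideal (namely $\m J+(f_1,\dots,f_{i-1})$ at step $i$) then produces the desired minimal generators.
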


\begin{proof}
It follows by \cite[Theorem 1.3(2)]{L} that $\ds(R)=\langle k\rangle_{(m-t+1)l}$, so that $\gt(k)\le(m-t+1)l-1$.
Thus
$$
(\dx(R)+1)(\level^k(k)+1)(\gt(k)+1)\le(n+1)(0+1)(((m-t+1)l-1)+1)=(n+1)(m-t+1)l.
$$
By virtue of (1) and (2) of Proposition \ref{30}, we observe that the equality $\ds(R)=\langle G\rangle_{(n+1)(m-t+1)l}$ holds true for all nonzero objects $G$ of $\ds(R)$, and that there is an inequality $\udim\ds(R)\le(n+1)(m-t+1)l-1$.
\end{proof}

We recall the definition of a Jacobian ideal of a complete equicharacteristic local ring.

\begin{dfn}
Suppose that the local ring $R$ is complete and equicharacteristic.
Then Cohen's structure theorem implies that $R$ is isomorphic to a quotient $k[\![x_1,\dots,x_n]\!]/(f_1,\dots,f_r)$ of a formal power series ring.
Put $h=\height(f_1,\dots,f_r)=n-\dim R$.
The {\em Jacobian ideal} of $R$, denoted $\jac R$, is defined as the ideal of $R$ generated by (the preimages in $R$ of) the $h\times h$ minors of the Jacobian matrix $\partial(f_1,\dots,f_r)/\partial(x_1,\dots,x_n)=(\partial f_i/\partial x_j)$.
\end{dfn}

The remark below provides examples of an ideal $J$ which satisfies the assumption given in Corollary \ref{28}.

\begin{rem}\label{29}
\begin{enumerate}[(1)]
\item
Let $R$ be an equicharacteristic excellent local ring with an isolated singularity.
Then the annihilator $\ann\ds(R)$ is an $\m$-primary ideal of $R$ (contained in $\ann\ds(R)$); see \cite[Theorem 1.3(1)]{L}.
\item
Let $R$ be an equicharacteristic complete Cohen--Macaulay local ring with perfect residue field, and assume that $R$ has an isolated singularity.
Then $\jac R$ is an $\m$-primary ideal of $R$ which is contained in $\ann\ds(R)$.
Indeed, it is observed from \cite[Lemmas 4.3 and Propositions 4.4, 4.5]{W} that the ideal $\jac R$ is $\m$-primary.
It follows from \cite[Theorem 5.3]{W} that $\jac R$ annihilates the $R$-module $\Ext^{d+1}(M,N)$ for all $R$-modules $M$ and $N$, where $d=\dim R$.
We thus see from \cite[Proposition 5.3(1)]{L} that $\jac R$ is contained in $\ann\ds(R)$.
\end{enumerate}
\end{rem}

We close the section by summarizing Corollaries \ref{40}, \ref{28} and Remark \ref{29}.

\begin{thm}\label{25}
Let $R$ be excellent, equicharacteristic and has an isolated singularity.
Let $J$ be an $\m$-primary ideal of $R$ contained in $\ann\ds(R)$ and set $m=\nu(J)$ and $l=\ell\ell(R/J)$; one can always take $J=\ann\ds(R)$, and one can even take $J=\jac R$ provided that $R$ is a complete Cohen--Macaulay local ring with $k$ perfect.
Put $e=\edim R$.
Let $s=1$ when $t=0$, and $s=2^e$ when $t>0$.
Then the following two statements hold true.
\begin{enumerate}[\rm(1)]
\item
Suppose that $\syz^{t+1}k$ belongs to $\add(R\oplus\syz^{t+2}k)$ (e.g., $\m$ is quasi-decomposable).
Then every nonzero object of $\ds(R)$ has finite generation time, and $\ds(R)$ has ultimate dimension at most $s(2t+3)(m-t+1)l-1$.
\item
Suppose that $\syz^tk$ belongs to $\add(R\oplus\syz^{t+2}k)$ (e.g., $R$ is a singular Burch ring).
Then every nonzero object of $\ds(R)$ has finite generation time, and $\ds(R)$ has ultimate dimension at most $s(2t+4)(m-t+1)l-1$.
\end{enumerate}
\end{thm}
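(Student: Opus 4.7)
The plan is to assemble the theorem by chaining together three earlier results: Corollary \ref{40} (which bounds the dominant index $\dx(R)$ under each syzygy hypothesis), Corollary \ref{28} (which converts a finite dominant index into a uniform generation-time bound on $\ds(R)$), and Remark \ref{29} (which produces the allowed choices of the ideal $J$). Proposition \ref{10} handles the two parenthetical examples in which the syzygy hypothesis is automatic.

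First, I will record the permissible choices of $J$. Under the standing assumption that $R$ is excellent, equicharacteristic and has an isolated singularity, Remark \ref{29}(1) tells us $\ann\ds(R)$ is $\m$-primary, so $J=\ann\ds(R)$ always satisfies the hypotheses of the theorem. When in addition $R$ is complete Cohen--Macaulay with $k$ perfect, Remark \ref{29}(2) shows $\jac R$ is $\m$-primary and contained in $\ann\ds(R)$, so $J=\jac R$ is also allowed.

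For assertion (1), I will apply Corollary \ref{40}(1): the hypothesis $\syz^{t+1}k\in\add(R\oplus\syz^{t+2}k)$ — which, by Proposition \ref{10}(3), holds automatically when $\m$ is quasi-decomposable — gives that $R$ is uniformly dominant with $\dx(R)\le s(2t+3)-1$. Setting $n=\dx(R)$ in Corollary \ref{28} then produces, for every nonzero $G\in\ds(R)$, the equality $\ds(R)=\langle G\rangle_{(n+1)(m-t+1)l}$, together with the bound $\udim\ds(R)\le(n+1)(m-t+1)l-1$. Since $n+1\le s(2t+3)$ and $(m-t+1)l\ge 1$ (note $m=\nu(J)\ge\height J=\dim R\ge t$ as $J$ is $\m$-primary, while $l\ge 1$ because $R/J$ is not zero), the desired estimate $\udim\ds(R)\le s(2t+3)(m-t+1)l-1$ follows.

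Assertion (2) is identical in structure: Corollary \ref{40}(2), combined with Proposition \ref{10}(1),(2) for the examples of singular hypersurfaces and singular Burch rings, yields $\dx(R)\le s(2t+4)-1$, and Corollary \ref{28} then bounds $\udim\ds(R)$ by $s(2t+4)(m-t+1)l-1$ while granting finite generation time to every nonzero object of $\ds(R)$. There is essentially no real obstacle here, since every substantive step has already been carried out in the earlier corollaries; the only task is bookkeeping to verify that the numerical estimates multiply out correctly.
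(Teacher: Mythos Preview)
Your proposal is correct and follows exactly the approach the paper intends: the paper presents Theorem~\ref{25} explicitly as a summary of Corollaries~\ref{40}, \ref{28} and Remark~\ref{29}, with no further proof given, and you have correctly spelled out how these pieces chain together. Your added verification that $(m-t+1)l\ge 1$ (via $\nu(J)\ge\height J=\dim R\ge t$ and $\ell\ell(R/J)\ge 1$) is a useful bit of bookkeeping needed for the inequality $(n+1)(m-t+1)l\le s(2t+3)(m-t+1)l$ to hold, which the paper leaves implicit.
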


%%%%%%%%%%%%%%%%%%%%%%%%%%%%%%%%%%
\section{Producing uniformly dominant local rings}

In this section, we present several methods to produce a uniformly dominant local ring from a given one, following the methods given in \cite{dlr} to produce a dominant local ring from a given one.
We begin with stating a general basic lemma concerning an exact functor of Verdier quotients of triangulated categories.

\begin{lem}\label{4}
Let $F:\T\to\U$ be an exact functor of triangulated categories.
Let $\X$ and $\Y$ be thick subcategories of $\T$ and $\U$, respectively, with $F(\X)\subseteq\Y$.
Then there exists an exact functor $\overline F:\T/\X\to\U/\Y$ such that $\overline F(\alpha(T))=\beta(F(T))$ for each $T\in\T$, where $\alpha:\T\to\T/\X$ and $\beta:\U\to\U/\Y$ stand for the canonical functors.
\end{lem}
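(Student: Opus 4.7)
The plan is to appeal to the universal property of the Verdier quotient. First I would form the composite exact functor $\beta\circ F:\T\to\U/\Y$. Since $F(\X)\subseteq\Y$ and every object of $\Y$ becomes isomorphic to zero in $\U/\Y$, the composite $\beta\circ F$ sends every object of $\X$ to an object isomorphic to $0$ in $\U/\Y$. The universal property of the canonical functor $\alpha:\T\to\T/\X$ states that any exact functor out of $\T$ which annihilates the thick subcategory $\X$ factors uniquely (up to natural isomorphism) through $\alpha$ by an exact functor. Applying this to $\beta\circ F$ yields the desired exact functor $\overline F:\T/\X\to\U/\Y$ with $\overline F\circ\alpha=\beta\circ F$, that is, $\overline F(\alpha(T))=\beta(F(T))$ for every $T\in\T$.

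If one wants an explicit construction rather than a citation, one can use the calculus of (right) fractions describing $\T/\X$: the objects are those of $\T$, and morphisms $T\to T'$ are equivalence classes of roofs $T\xleftarrow{s}T''\xrightarrow{f}T'$ in $\T$ whose left leg $s$ has cone in $\X$. Define $\overline F$ on objects by $\overline F(T)=F(T)$, and on morphisms by sending the above roof to $F(T)\xleftarrow{F(s)}F(T'')\xrightarrow{F(f)}F(T')$ in $\U$. Exactness of $F$ ensures that the cone of $F(s)$ is $F$ applied to the cone of $s$, hence lies in $F(\X)\subseteq\Y$, so this is a legitimate roof representing a morphism in $\U/\Y$. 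Well-definedness on equivalence classes and compatibility with composition are immediate from the functoriality and exactness of $F$, together with the analogous verifications already built into the construction of $\T/\X$ and $\U/\Y$.

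The slightly less mechanical point to verify is that $\overline F$ is exact: it must commute with the shift functor and send exact triangles in $\T/\X$ to exact triangles in $\U/\Y$. But the shift on $\T/\X$ is induced from that on $\T$, and the exact triangles of $\T/\X$ are by definition those isomorphic to images under $\alpha$ of exact triangles of $\T$; since $F$ commutes with shifts and carries exact triangles of $\T$ to exact triangles of $\U$, and $\beta$ does the same from $\U$ to $\U/\Y$, the same is inherited by $\overline F$. The main subtlety, if any, lies only in the standard bookkeeping to check that the prescription on roofs descends to a well-defined map on equivalence classes, which is where one uses that $\beta$ inverts precisely those morphisms of $\U$ whose cone lies in $\Y$.
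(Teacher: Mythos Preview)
Your proof is correct and follows essentially the same approach as the paper: form the composite $\beta\circ F$, observe that it annihilates $\X$, and invoke the universal property of the Verdier quotient (the paper cites Neeman, \emph{Triangulated categories}, Theorem 2.1.8) to factor through $\alpha$. Your additional paragraphs on the explicit roof construction and the verification of exactness are more detailed than what the paper provides, but not needed once the universal property is cited.
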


\begin{proof}
The composition $\beta F:\T\to\U/\Y$ is an exact functor and satisfies $(\beta F)(\X)\subseteq\beta(\Y)=0$.
Hence $\beta F$ factors via $\alpha$; see \cite[Theorem 2.1.8]{N}.
Thus we get a functor $\overline F:\T/\X\to\U/\Y$ as in the assertion follows.
\end{proof}

We can now show the theorem below, which is a uniformly dominant version of \cite[Theorem 5.6]{dlr}.
Let us explain notation used in the proof:
For an element $x$ of $R$ we denote by $\k(x,R)$ the Koszul complex of $x$ over $R$, i.e., $\k(x,R)=(0\to R\xrightarrow{x}R\to0)$.
By $(R/(x))_R$ we mean the (right) $R$-module $R/(x)$ via the surjective ring homomorphism $R\to R/(x)$.
For a complex $X$ of $R$-modules, we put $\inf X=\inf\{i\in\ZZ\mid\h^iX\ne0\}$.
We denote by $\dm(R)$ the derived category of complexes $X$ of $R$-modules such that $\h^iX=0$ for all $i\gg0$.

\begin{thm}\label{5}
Let $x\in\m$ be an $R$-regular element.
Then the following two statements hold.
\\
{\rm(1)} There is an inequality $\dx(R)\le2\dx(R/(x))+1$.\quad
{\rm(2)} One has that $\dx(R/(x))\le2\dx(R)+1$ if $x\notin\m^2$.
\end{thm}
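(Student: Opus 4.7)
The plan uses change-of-rings functors between the singularity categories. Set $S=R/(x)$. Since $x$ is $R$-regular, the Koszul complex $\k(x,R)=(0\to R\xrightarrow{x}R\to0)$ is a length-one free resolution of $S$ over $R$, so $\pd_R S=1$. Consequently, both the derived tensor $F:=(-)\lten S\colon\db(R)\to\db(S)$ and the restriction of scalars $G\colon\db(S)\to\db(R)$ send perfect complexes to perfect complexes (the former since perfection is stable under derived tensor, the latter since $\pd_R S<\infty$), so Lemma \ref{4} produces exact functors $F\colon\ds(R)\to\ds(S)$ and $G\colon\ds(S)\to\ds(R)$. For any $X\in\db(R)$ the canonical triangle $X\xrightarrow{x}X\to F(X)\to X[1]$ descends to $\ds(R)$ and yields $GF(X)\in\langle X\rangle_2^{\ds(R)}$.

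For (1), set $n=\dx(S)$ and fix $0\ne X\in\ds(R)$. By \cite[Lemma 2.4(2a)]{sing} we may represent $X$ (up to shift) by a first syzygy $N=\syz M$ of some module $M$ with $\pd_R M=\infty$; note that $\pd_R N=\infty$, and since $N$ embeds in a free $R$-module, $x$ is automatically $N$-regular. Thus $N\lten S\cong N/xN$ is concentrated in degree zero, and Rees' change-of-rings theorem gives $\pd_S(N/xN)=\pd_R N=\infty$, so $F(N)\ne 0$ in $\ds(S)$. Applying $\dx(S)\le n$ produces $k\in\langle F(N)\rangle_{n+1}^{\ds(S)}$, whence $G$ yields $k\in\langle GF(N)\rangle_{n+1}^{\ds(R)}\subseteq\langle N\rangle_{2(n+1)}^{\ds(R)}=\langle X\rangle_{2(n+1)}^{\ds(R)}$. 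Therefore $\dx(R)\le 2n+1$.

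For (2), set $n=\dx(R)$ and fix $0\ne Y\in\ds(S)$. Since $Y$ is annihilated by $x$, Rees' theorem gives $\pd_R Y=\pd_S Y+1=\infty$, so $G(Y)=Y$ is nonzero in $\ds(R)$, and $\dx(R)\le n$ yields $k\in\langle Y\rangle_{n+1}^{\ds(R)}$. Applying the exact functor $F$, and using $xk=0=xY$ to see that the Koszul differential vanishes after tensoring, we compute $F(k)\cong k\oplus k[1]$ and $F(Y)\cong Y\oplus Y[1]$ in $\ds(S)$, so $k\oplus k[1]\in\langle Y\oplus Y[1]\rangle_{n+1}^{\ds(S)}=\langle Y\rangle_{n+1}^{\ds(S)}$. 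Closure of $\langle Y\rangle_{n+1}$ under direct summands then gives $k\in\langle Y\rangle_{n+1}^{\ds(S)}$, i.e., $\dx(S)\le n$. The hypothesis $x\notin\m^2$ does not obviously enter this tensor-based route; the author's proof presumably takes a more module-theoretic path where $x$ being part of a minimal system of generators of $\m$ plays a concrete role, for instance in lifting or directly comparing minimal free resolutions of $k$ over $R$ and $S$.

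The main obstacle is pinning down the nonvanishing step in (1). The observation that $X$ can be represented by a first syzygy $\syz M$, on which $x$ is automatically regular, is what makes the Rees reduction clean; without that reduction one would have to analyze the cohomology of $M\lten S$ when $x$ is a zero-divisor on a chosen module representative, a notably more delicate task. A secondary care point is the bookkeeping of the multiplicative increase $n+1\mapsto 2(n+1)$ arising from the composition $G\circ F$, and verifying that this is consistent with the definition of $\dx$ (so that $2n+2$ cones translate into $\dx(R)\le 2n+1$).
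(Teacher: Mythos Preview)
Your treatment of (1) is correct and close in spirit to the paper's. You reduce to a module representative $N=\syz M$ on which $x$ is regular, so that $N\lten_R S\simeq N/xN$, and then use the change-of-rings equality $\pd_{S}(N/xN)=\pd_R N$ (valid because $x$ is regular on both $R$ and $N$; this persists when $\pd_R N=\infty$). The paper bypasses the module reduction and computes directly
\[
\pd_{S}\Phi(X)=-\inf\bigl((X\lten_R S)\lten_{S}k\bigr)=-\inf(X\lten_R k)=\pd_R X
\]
for an arbitrary $X\in\db(R)$. The rest of your argument for (1), and the splitting $F(Y)\cong Y\oplus Y[1]$ in (2), match the paper.

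Your argument for (2), however, has a genuine gap, and it lies exactly at the point you flagged as mysterious. You write ``Rees' theorem gives $\pd_R Y=\pd_S Y+1=\infty$'', but the equality $\pd_R M=1+\pd_{S}M$ for a nonzero $S$-module $M$ (with $x$ $R$-regular) is only valid when $\pd_{S}M<\infty$; it can fail when $\pd_{S}M=\infty$. Concretely, let $R$ be regular local and $x\in\m^2$ an $R$-regular element. Then $S=R/(x)$ is a singular hypersurface, so $\pd_{S}k=\infty$, yet $\pd_R k=\dim R<\infty$; hence $G(k)=0$ in $\ds(R)$ although $k\ne 0$ in $\ds(S)$. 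The restriction functor $G$ is therefore not conservative in general, and the hypothesis $x\notin\m^2$ is precisely what guarantees it is: under that assumption, \cite[Theorem~2.2.3]{A} shows $\pd_{S}M=\infty\Rightarrow\pd_R M=\infty$, which is exactly what the paper invokes at this step. So the hypothesis is not dispensable on your ``tensor-based route''; it enters at the nonvanishing step you treated as automatic.
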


\begin{proof}
Let $X\in\db(R)$.
Then $X\lten_RR/(x)$ is isomorphic to $X\otimes_R\k(x,R)$, which is a (homologically) bounded complex.
Hence $X\lten_RR/(x)\in\db(R/(x))$, and we get an exact functor $-\lten_RR/(x):\db(R)\to\db(R/(x))$.
This sends each object in $\thick_{\db(R)}R$ to an object in $\thick_{\db(R/(x))}R/(x)$.
By Lemma \ref{4}, we see that $-\lten_RR/(x)$ induces an exact functor $\Phi:\ds(R)\to\ds(R/(x))$.
On the other hand, as $\pd_RR/(x)=1<\infty$, all the objects in $\thick_{\db(R/(x))}R/(x)$ have finite projective dimension over $R$.
The exact functor $-\lten_{R/(x)}(R/(x))_R:\db(R/(x))\to\db(R)$ sends each object in $\thick_{\db(R/(x))}R/(x)$ to an object in $\thick_{\db(R)}R$.
We see from Lemma \ref{4} that $-\lten_{R/(x)}(R/(x))_R$ induces an exact functor $\Psi:\ds(R/(x))\to\ds(R)$.

(1) Pick any nonzero object $X$ of $\ds(R)$.
Then $X$ is an object of $\db(R)$ with $\pd_RX=\infty$.
We get
$$
\pd_{R/(x)}\Phi(X)=\pd_{R/(x)}(X\lten_RR/(x))=-\inf(X\lten_RR/(x)\lten_{R/(x)}k)=-\inf(X\lten_Rk)=\pd_RX=\infty
$$
by \cite[(A.5.7.2)]{C}.
Hence, $\Phi(X)$ is nonzero in $\ds(R/(x))$.
Now, let $n$ be an integer with $\dx(R/(x))\le n$.
Then $k$ is in $\langle\Phi(X)\rangle_{n+1}^{\ds(R/(x))}$.
The exact functor $\Psi$ provides $k=\Psi(k)\in\langle \Psi\Phi(X)\rangle_{n+1}^{\ds(R)}$.
Applying the exact functor $X\lten_R-$ to the exact triangle $R\xrightarrow{x}R\to R/(x)\rightsquigarrow$ in $\db(R)$, we get an exact triangle $X\xrightarrow{x}X\to\Psi\Phi(X)\rightsquigarrow$ in $\ds(R)$, which implies $\Psi\Phi(X)\in\langle X\rangle_2^{\ds(R)}$.
Thus $k$ belongs to $\langle X\rangle_{2(n+1)}^{\ds(R)}$, and it follows that $\dx(R)\le2n+1$.

(2) Pick any nonzero object $X$ of $\ds(R/(x))$.
There exist an $R/(x)$-module $M$ and an integer $m$ such that $X\cong M[m]$ in $\ds(R/(x))$ by \cite[Lemma 2.4(2a)]{sing}.
Then $M$ has infinite projective dimension over $R/(x)$.
As $x$ is $R$-regular and outside $\m^2$, we see from \cite[Theorem 2.2.3]{A} that $M$ has infinite projective dimension over $R$, and so does $N=\syz_RM$.
Hence $N$ is nonzero in $\ds(R)$.
Let $n$ be an integer with $\dx(R)\le n$.
Then $k$ belongs to $\langle N\rangle_{n+1}^{\ds(R)}$, and so does $\m=\syz_Rk$.
Since $x$ is $N$-regular, we have $\Phi(N)=N\lten_RR/(x)=N/xN$.
The exact functor $\Phi$ shows $\m/x\m=\Phi(\m)\in\langle\Phi(N)\rangle_{n+1}^{\ds(R/(x))}=\langle N/xN\rangle_{n+1}^{\ds(R/(x))}$.
As there is an $R/(x)$-isomorphism $\m/x\m\cong k\oplus\m/(x)$ (see \cite[Corollary 5.3]{syz2} for instance), $k$ belongs to $\langle N/xN\rangle_{n+1}^{\ds(R/(x))}$.
The chain map from the exact sequence $0\to N\to F\to M\to0$ with $F$ free to itself given by multiplication by $x$ induces an exact sequence $0\to M\to N/xN\to\syz_{R/(x)}M\to0$ of $R/(x)$-modules.
Hence $k\in\langle N/xN\rangle_{n+1}^{\ds(R/(x))}=\langle M\rangle_{2(n+1)}^{\ds(R/(x))}=\langle X\rangle_{2(n+1)}^{\ds(R/(x))}$, and we thus conclude that $\dx(R/(x))\le2n+1$.
\end{proof}

The above theorem gives the following, which includes a uniformly dominant version of \cite[Corollary 5.8]{dlr}.

\begin{cor}\label{11}
The local ring $R$ is uniformly dominant if and only if so is the formal power series ring $R[\![x]\!]$, if and only if so is the completion $\widehat R$.
More precisely, the following hold, where $m\ge0$ and $e=\edim R$.
\begin{enumerate}[\rm(1)]
\item
There are inequalities $\dx(R)\le2^m(\dx(R[\![x_1,\dots,x_m]\!])+1)-1\text{ and }\dx(R[\![x_1,\dots,x_m]\!])\le2^m(\dx(R)+1)-1$.
\item
There are inequalities $\dx(\widehat R)\le2^{2e}(\dx(R)+1)-1$ and $\dx(R)\le2^{2e}(\dx(\widehat R)+1)-1$.
\end{enumerate}
\end{cor}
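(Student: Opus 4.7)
Both parts reduce to iterating Theorem~\ref{5}. The plan is to adjoin power-series variables to $R$ and then, for part (2), kill a regular sequence collapsing the resulting ring back onto $\widehat R$.

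For part (1), set $T_j := R[\![x_1,\dots,x_j]\!]$. Each $x_j$ is a $T_j$-regular element with $T_j/(x_j)\cong T_{j-1}$ (via the formal substitution $x_j\mapsto 0$), and $x_j\notin\m_{T_j}^2$ because it is part of a minimal generating set of the maximal ideal. Theorem~\ref{5}(2) yields the chain $\dx(R)=\dx(T_0)\le\dx(T_1)\le\dots\le\dx(T_m)$, while Theorem~\ref{5}(1) gives $\dx(T_j)+1\le 2(\dx(T_{j-1})+1)$, which telescopes to $\dx(T_m)+1\le 2^m(\dx(R)+1)$.

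For part (2), let $e=\edim R$, pick a minimal generating set $x_1,\dots,x_e$ of $\m$, and set $S := R[\![y_1,\dots,y_e]\!]$. The main structural claim is the ring isomorphism
\[
\widehat R\ \cong\ S/(y_1-x_1,\dots,y_e-x_e),
\]
with $y_1-x_1,\dots,y_e-x_e$ forming an $S$-regular sequence whose members remain outside the square of the maximal ideal in each intermediate quotient. I would prove this in three steps: (a) the continuous evaluation $\phi\colon S\to\widehat R$, $y_i\mapsto x_i$, is well defined since $x_i\in\m$, and is surjective by writing $a\in\widehat R$ as a telescoping series $r_0+\sum_{n\ge 1}(r_n-r_{n-1})$ with $r_n-r_{n-1}\in\m^n\widehat R\cap R=\m^n$ and then expanding each difference as a sum of monomials $x^\alpha$; (b) factor $\phi$ through the completion $\widehat S=\widehat R[\![y_1,\dots,y_e]\!]$ and use the substitution automorphism $y_i\mapsto y_i+x_i$ of $\widehat S$ to identify $\ker(\widehat S\to\widehat R)$ with $(y_1-x_1,\dots,y_e-x_e)_{\widehat S}$; (c) contract this ideal by faithful flatness of $S\hookrightarrow\widehat S$ to obtain $\ker\phi=(y_1-x_1,\dots,y_e-x_e)_S$. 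Regularity of the sequence in $S$ transfers from regularity in $\widehat S$ by the same faithful flatness, and the $\m^2$-condition at each stage is a direct check on the $y_i$-linear coefficient. With the claim in hand, iterating Theorem~\ref{5}(1) gives $\dx(S)\le 2^e(\dx(\widehat R)+1)-1$, iterating Theorem~\ref{5}(2) gives $\dx(\widehat R)\le\dx(S)$, and combining with the bounds on $\dx(S)$ from part (1) yields the two inequalities of part (2). The ``iff'' assertions are immediate from these finite bounds.

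The main obstacle I anticipate is step (c): Weierstrass-style division by $y_i-x_i$ inside $R[\![y_1,\dots,y_e]\!]$ can a priori produce quotients whose coefficients lie in $\widehat R\setminus R$, so one genuinely needs the faithful flatness of $S\hookrightarrow\widehat S$ to ensure that $\ker\phi$ is no larger than $(y_1-x_1,\dots,y_e-x_e)_S$.
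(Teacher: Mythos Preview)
Your proposal is correct and follows essentially the same route as the paper: iterate Theorem~\ref{5} along the power-series variables for (1), and for (2) realize $\widehat R$ as $R[\![y_1,\dots,y_e]\!]/(y_i-x_i)$ with $y_i-x_i$ a regular sequence of elements outside $\m^2$ at each stage, then iterate Theorem~\ref{5} again. The only difference is that the paper dispatches your structural claim by citation---\cite[Theorem 8.12]{M} for the isomorphism and \cite[Lemma 5.7]{dlr} for regularity and the $\m^2$-condition---whereas you supply an outline via the substitution automorphism of $\widehat S$ and faithful flatness of $S\hookrightarrow\widehat S$; your sketch is sound and recovers exactly those cited facts.
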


\begin{proof}
(1) We may assume $m=1$.
The element $x_1$ is $R[\![x_1]\!]$-regular and does not belong to the square of the maximal ideal of $R[\![x_1]\!]$, and $R[\![x_1]\!]/(x_1)$ is isomorphic to $R$.
The assertion follows by Theorem \ref{5}.

(2) We can write $\m=(a_1,\dots,a_e)$.
The completion $\widehat R$ is isomorphic to $R[\![x_1,\dots,x_e]\!]/(x_1-a_1,\dots,x_e-a_e)$; see \cite[Theorem 8.12]{M}.
For every integer $1\le i\le e$, the element $x_i-a_i$ is regular on $R[\![x_1,\dots,x_e]\!]/(x_1-a_1,\dots,x_{i-1}-a_{i-1})$ and is outside of the square of its maximal ideal; see \cite[Lemma 5.7]{dlr}.
Therefore, 
$$
\dx(\widehat R)
=\dx(R[\![x_1,\dots,x_e]\!]/(x_1-a_1,\dots,x_e-a_e))
\le2^e(\dx(R[\![x_1,\dots,x_e]\!])+1)-1
\le2^{2e}(\dx(R)+1)-1,
$$
where the two inequalities follow from Theorem \ref{5}(2) and (1), respectively.
On the other hand, it holds that 
$$
\dx(R)
\le2^e(\dx(R[\![x_1,\dots,x_e]\!])+1)-1
\le2^{2e}(\dx(R[\![x_1,\dots,x_e]\!]/(x_1-a_1,\dots,x_e-a_e))+1)-1
=2^{2e}(\dx(\widehat R)+1)-1,
$$
where the two inequalities follow from (1) and Theorem \ref{5}(1), respectively.
\end{proof}

Next we state and prove a uniformly dominant version of \cite[Proposition 8.3]{dlr}.

\begin{prop}\label{8}
Let $R$ be Cohen--Macaulay with dimension one.
Let $I$ be an $\m$-primary ideal of $R$.
Let $a,b$ be parameters of $R$, i.e., $a,b$ are elements of $\m$ with $R/aR,R/bR$ artinian. 
Then $\dx(R/bI)\le4\dx(R/aI)+3$.
\end{prop}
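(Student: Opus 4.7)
The plan is to follow the template of Theorem~\ref{5}(1): build exact functors $\Phi\colon\ds(R/bI)\to\ds(R/aI)$ and $\Psi\colon\ds(R/aI)\to\ds(R/bI)$ so that the composite $\Psi\Phi$ fits into a Koszul-like exact triangle $X\to X\to\Psi\Phi(X)\rightsquigarrow$ in $\ds(R/bI)$, placing $\Psi\Phi(X)\in\langle X\rangle_2^{\ds(R/bI)}$; moreover $\Phi$ should preserve nonzero objects and $\Psi$ should send $k$ to $k$. Given these, the conclusion follows as in the proof of Theorem~\ref{5}(1): for $0\ne X\in\ds(R/bI)$, uniform dominance of $R/aI$ gives $k\in\langle\Phi(X)\rangle_{n+1}^{\ds(R/aI)}$ with $n=\dx(R/aI)$, hence $k\simeq\Psi(k)\in\langle\Psi\Phi(X)\rangle_{n+1}^{\ds(R/bI)}\subseteq\langle X\rangle_{2(n+1)}^{\ds(R/bI)}$, giving $\dx(R/bI)\le 2n+1$.

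The wrinkle is that $a,b$ being parameters forces both $aI$ and $bI$ to be $\m$-primary (as $\sqrt{aI}=\sqrt{aR}\cap\sqrt{I}=\m$), so $R/aI$ and $R/bI$ are Artinian and have no non-zerodivisors in their maximal ideals. Thus Theorem~\ref{5}(1) cannot be invoked with one of these rings playing the role of $R$; the functors $\Phi$ and $\Psi$ must be built indirectly. The natural bridge is the intermediate Artinian ring $T:=R/abI$, which surjects onto both $R/aI$ and $R/bI$ with kernels $K_a:=aI/abI\cong I/bI$ and $K_b:=bI/abI\cong I/aI$ (the isomorphisms coming from the $R$-regularity of $a$ and $b$). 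The decisive identity
\[
K_aK_b\subseteq abI\cdot I/abI=0
\]
inside $T$ should play the role of the Koszul identity $x\cdot x = x\cdot x$ in the Theorem~\ref{5}(1) argument.

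Concretely, I would define $\Phi$ and $\Psi$ by composing restriction of scalars along the surjections $T\twoheadrightarrow R/bI$ and $T\twoheadrightarrow R/aI$ with derived tensoring over $T$ with $R/aI$ and $R/bI$ respectively, and verify that they respect perfect complexes so as to descend to the singularity categories via Lemma~\ref{4}. The composite is then $\Psi\Phi(X)\simeq X\otimes_T^{\mathbf{L}}(R/aI\otimes_T^{\mathbf{L}} R/bI)$, and the heart of the argument is to identify $R/aI\otimes_T^{\mathbf{L}} R/bI$ with a two-term Koszul-type complex over $R/bI$, exploiting the vanishing $K_aK_b=0$; tensoring the resulting triangle $R/bI\to R/bI\to R/aI\otimes_T^{\mathbf{L}} R/bI\rightsquigarrow$ with $X$ over $R/bI$ then produces the target triangle. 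The main obstacle will be this identification -- together with the checks that $\Phi,\Psi$ descend to the singularity categories and preserve nonzero-ness, since the Artinian nature of $T$ means the usual finite-projective-dimension arguments of Theorem~\ref{5}'s proof are unavailable and must be replaced by more delicate bimodule-theoretic computations.
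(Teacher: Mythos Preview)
Your approach has a genuine gap that cannot be patched. The intermediate ring $T=R/abI$ is Artinian, and neither $R/aI$ nor $R/bI$ is free over $T$ (their kernels $K_a,K_b$ are nonzero), so $\pd_T(R/aI)=\pd_T(R/bI)=\infty$. Consequently, for a general $X\in\db(R/bI)$ the complex $X\lten_TR/aI$ is unbounded, and your functor $\Phi$ does not even land in $\db(R/aI)$, let alone descend to $\ds(R/aI)$. Worse, the object $R/aI\lten_TR/bI$ that you hope to identify with a two-term complex is itself unbounded: already for $R=k[\![t]\!]$, $I=(t)$, $a=b=t$ one has $T=k[t]/(t^3)$ and $R/aI=R/bI=k[t]/(t^2)$, whose minimal $T$-resolution is periodic, so $\mathrm{Tor}_i^T(R/aI,R/bI)\ne0$ for all $i\ge0$. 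The identity $K_aK_b=0$ only tells you that the first syzygy of $R/aI$ over $T$ is an $R/bI$-module, not that it is $T$-free; it does not produce a Koszul-type resolution. You correctly flagged this obstacle in your last paragraph, but the proposal offers no way around it.

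The paper sidesteps the issue by passing to a \emph{non-Artinian} auxiliary ring. Set $S=R[\![x]\!]/xIR[\![x]\!]$; this is a one-parameter family with $S/(x-c)\cong R/cI$ for every parameter $c$ of $R$. One checks (as in \cite[proof of Proposition 8.3]{dlr}) that $x-a$ and $x-b$ are $S$-regular and that $x-b$ lies outside the square of the maximal ideal of $S$. Then Theorem~\ref{5} applies directly:
\[
\dx(R/bI)=\dx(S/(x-b))\le\dx(S)\le 2\,\dx(S/(x-a))+1=2\,\dx(R/aI)+1,
\]
using part (2) for the first inequality and part (1) for the second. The key idea you are missing is that adjoining the formal variable $x$ creates room for honest regular elements, which no Artinian intermediate ring can supply.
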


\begin{proof}
Set $S=R[\![x]\!]/xIR[\![x]\!]$.
The proof of \cite[Proposition 8.3]{dlr} shows $x-a,x-b$ are $S$-regular elements, $x-b$ is not in the square of the maximal ideal of $S$, and $S/(x-c)\cong R/cI$ for any parameter $c$ of $R$.
Thus
$$
\dx(R/bI)
=\dx(S/(x-b))
\le2\dx(S)+1
\le4\dx(S/(x-a))+3
=4\dx(R/aI)+3,
$$
where the two inequalities are shown to hold by using (2) and (1) of Theorem \ref{5}, respectively.
\end{proof}

Now we obtain an example of a uniformly dominant local ring, which is a refinement of \cite[Corollary 8.4]{dlr}.

\begin{cor}\label{31}
Let $k$ be a field.
Let $a_1>a_2>\cdots>a_n=0=b_1<b_2<\cdots<b_n$ be integers with $n\ge3$.
Let $R=k[x,y]/(x^{a_1},x^{a_2}y^{b_2},\dots,x^{a_{n-1}}y^{b_{n-1}},y^{b_n})$.
Then $R$ is a uniformly dominant local ring with $\dx(R)\le15$.
\end{cor}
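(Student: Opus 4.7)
The plan is to apply Proposition~\ref{8} once, using a $1$-dimensional Cohen--Macaulay hypersurface $S$ that surjects onto $R$, in order to reduce the computation of $\dx(R)$ to that of a singular Burch ring (whose dominant index is at most $3$ by Corollary~\ref{40}(2)); the target bound $7 = 2\cdot 3 + 1$ indicates a single application of Proposition~\ref{8} with base case $\dx \le 3$.

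First, I would choose a nonzero, nonunit element $f$ of the defining ideal of $R$ in $k[\![x,y]\!]$---for instance, the Brieskorn-type element $f = x^{a_1} - y^{b_n}$, or else a suitable monomial drawn from the ``corners'' of the staircase of exponents. Setting $S = k[\![x,y]\!]/(f)$ gives a $1$-dimensional Cohen--Macaulay hypersurface, and by construction $S$ surjects onto $R$. I would then produce an $\m$-primary ideal $I$ of $S$ together with a parameter $b$ of $S$ so that the kernel of $S \twoheadrightarrow R$ equals $bI$. For the Brieskorn choice, a natural candidate is $b = x$ and
$$
I = (x^{a_1-1},\; x^{a_2-1}y^{b_2},\; \ldots,\; x^{a_{n-1}-1}y^{b_{n-1}});
$$
the relation $y^{b_n} = x^{a_1}$ in $S$ forces $y^{b_n} \in xI \subseteq I$, from which it follows that $I$ is $\m$-primary, while $xI$ matches the kernel of $S \twoheadrightarrow R$.

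The main step is then to select a second parameter $a$ of $S$ such that $S/aI$ is a singular Burch ring. Once this is achieved, Corollary~\ref{40}(2) (applied to the artinian ring $S/aI$, for which $t = 0$ and $s = 1$) delivers $\dx(S/aI) \le s(2t+4)-1 = 3$, and Proposition~\ref{8} then yields
$$
\dx(R) \;=\; \dx(S/bI) \;\le\; 2\,\dx(S/aI) + 1 \;\le\; 7,
$$
completing the argument.

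The chief obstacle is the explicit choice of $a$ and the verification that $S/aI$ is Burch, since the hypothesis $n \ge 3$ covers staircases with widely varying shapes and the ring $R$ itself need not be Burch (as Example~\ref{45} already indicates for related rings). Concretely, the Burch condition $\n_T(J :_T \n_T) \ne \n_T J$ (where $T = k[\![x,y]\!]$ and $J \subseteq T$ is the preimage of the defining ideal of $S/aI$) must be checked by computing the socle of $T/J$ from the staircase of $S/aI$ and exhibiting a socle element whose multiples by $\n_T$ are not all absorbed into $\n_T J$. In general the parameter $a$ is chosen adaptively to the exponent pattern $a_1 > \cdots > a_{n-1} \ge 1$ and $1 \le b_2 < \cdots < b_n$, and the bookkeeping near the corners of the staircase is the delicate part of the argument.
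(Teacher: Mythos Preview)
Your overall strategy is exactly the paper's: one application of Proposition~\ref{8} to pass from $R=S/bI$ to a quotient $S/aI$ that is a singular artinian Burch ring, then Corollary~\ref{40}(2) to get $\dx(S/aI)\le3$, hence $\dx(R)\le7$. The gap is that you never identify $a$ or verify the Burch property, and your particular choices of $S$ and $b$ make that step genuinely hard rather than routine.

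The paper's choices are different and are precisely engineered to make the Burch verification automatic. One takes the \emph{monomial} hypersurface $S=k[\![x,y]\!]/(x^{a_1})$ (not a Brieskorn curve), and factors $y^{b_2}$ out of every generator except $x^{a_1}$ to define
\[
I=(x^{a_2},\,x^{a_3}y^{\,b_3-b_2},\,\dots,\,x^{a_{n-1}}y^{\,b_{n-1}-b_2},\,y^{\,b_n-b_2}),
\]
so that $S/y^{b_2}I\cong R$. Both parameters are powers of $y$: with $a=y$ one gets
\[
S/yI\cong k[\![x,y]\!]/(x^{a_1},\,x^{a_2}y,\,x^{a_3}y^{\,b_3-b_2+1},\dots,\,y^{\,b_n-b_2+1}),
\]
again a staircase ring but now with second $y$-exponent equal to $1$. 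That normalization is exactly what forces the Burch condition (this is recorded in the proof of \cite[Corollary~8.4]{dlr}, which the paper simply cites). Note also that your alternative of taking a monomial $f$ while keeping $b=x$ fails outright: in $k[\![x,y]\!]/(x^{a_1})$ the element $x$ is nilpotent, hence not a parameter. So with the monomial $S$ one is forced to work in the $y$-direction, and pulling out $y^{b_2}$ is the natural move. Your Brieskorn setup is internally consistent, but it leaves you searching for an $a$ with no evident structural reason for $S/aI$ to be Burch; the paper's trick avoids that search entirely.
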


\begin{proof}
Let $S=k[\![x,y]\!]/(x^{a_1})$, and take its ideals $\n=(x,y)$ and $I=(x^{a_2},x^{a_3}y^{b_3-b_2},\dots,x^{a_{n-1}}y^{b_{n-1}-b_2},y^{b_n-b_2})$.
The proof of \cite[Corollary 8.4]{dlr} shows that $I$ is $\n$-primary, that $y$ is a parameter of $S$, that $S/yI$ is a singular artinian Burch ring (note that $a_1\ge2$ as $n\ge3$), and that $S/y^{b_2}I\cong R$.
We have $\dx(S/yI)\le1(2\cdot0+4)-1=3$ by virtue of Corollary \ref{40}(2).
Applying Proposition \ref{8}, we observe that $\dx(R)=\dx(S/y^{b_2}I)\le4\cdot3+3=15$.
\end{proof}

\begin{rem}
\begin{enumerate}[(1)]
\item
Recall from Corollary \ref{40} that every Burch ring and every local ring with quasi-decomposable maximal ideal are uniformly dominant local rings.
By combining Corollary \ref{31} with the last statement of \cite[Corollary 8.4]{dlr}, we see that, unless $a_r-a_{r+1}=1$ or $b_{r+1}-b_r=1$ for some $1\le r<n$, the ring $R$ in Corollary \ref{31} is a uniformly dominant local ring which is not Burch.
The artinian ring $R$ in Corollary \ref{31} is a uniformly dominant local ring whose maximal ideal is not (quasi-)decomposable, unless $n=3$ and $a_2=b_2=1$.
\item
By virtue of \cite[Theorem 1.3]{CDEKPU}, for an artinian non-Burch local ring $R$ of embedding dimension two, if $\m$ is indecomposable, then so is $\syz^2k$.
Combining this with Corollary \ref{31}, we obtain a lot of examples of a uniformly dominant local ring the second syzygy of whose residue field is indecomposable.
\end{enumerate}
\end{rem}

Next we establish a uniformly dominant version of \cite[Proposition 8.6]{dlr}.

\begin{prop}\label{12}
Let $A=k[\![x_1,\dots,x_h]\!]$ and $B=k[\![y_1,\dots,y_m]\!]$ with $k$ a field.
Let $f_1,\dots,f_l\in(x_1,\dots,x_h)^2$ and $g_1,\dots,g_h\in(y_1,\dots,y_m)$.
Put $R=A/(f_1,\dots,f_l)$ and $S=B/(\widetilde{f_1},\dots,\widetilde{f_l})$, where $\widetilde{f_i}=f_i(g_1,\dots,g_h)$ for each $i$.
Suppose that $x_1-g_1,\dots,x_h-g_h$ is a regular sequence on $R[\![y_1,\dots,y_m]\!]$ (this is satisfied if $R$ is Cohen--Macaulay and $\height(f_1,\dots,f_l)=\height(\widetilde{f_1},\dots,\widetilde{f_l})$).
Then $\dx(S)\le2^{m+h}(\dx(R)+1)-1$ and $\dx(R)\le2^{m+h}(\dx(S)+1)-1$.
\end{prop}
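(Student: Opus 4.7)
The plan is to realize $S$ as the quotient of the formal power series extension $T:=R[\![y_1,\dots,y_m]\!]$ by the regular sequence $x_1-g_1,\dots,x_h-g_h$, and then chain together the inequalities in Theorem~\ref{5} and Corollary~\ref{11}.

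First I would establish the isomorphism $S\cong T/(x_1-g_1,\dots,x_h-g_h)$. Writing $T=k[\![x_1,\dots,x_h,y_1,\dots,y_m]\!]/(f_1,\dots,f_l)$ and then killing each $x_i-g_i$ substitutes $g_i$ for $x_i$, sending $f_j$ to $f_j(g_1,\dots,g_h)=\widetilde{f_j}$; the resulting ring is $k[\![y_1,\dots,y_m]\!]/(\widetilde{f_1},\dots,\widetilde{f_l})=S$. Set $T_0:=T$ and $T_i:=T_{i-1}/(x_i-g_i)$, so that $T_h\cong S$.

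The critical technical point is that each $x_i-g_i$ lies outside $\m_{T_{i-1}}^2$. To see this, observe that $\m_T/\m_T^2$ is spanned by $\bar x_1,\dots,\bar x_h,\bar y_1,\dots,\bar y_m$, since the relations $f_j\in(x_1,\dots,x_h)^2$ contribute nothing in degree one. As $g_j\in(y_1,\dots,y_m)$, the degree-one part of $g_j$ lies in the $k$-span of $\bar y_1,\dots,\bar y_m$, so the successive relations $x_j\equiv g_j$ for $j<i$ merely express $\bar x_1,\dots,\bar x_{i-1}$ as $k$-linear combinations of the $\bar y_\bullet$. Hence $\bar x_i,\dots,\bar x_h,\bar y_1,\dots,\bar y_m$ remain $k$-linearly independent in $\m_{T_{i-1}}/\m_{T_{i-1}}^2$, and the image of $x_i-g_i$ has coefficient $1$ on $\bar x_i$, so it is nonzero.

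For the first inequality, Corollary~\ref{11}(1) gives $\dx(T)\le 2^m(\dx(R)+1)-1$, and $h$ successive applications of Theorem~\ref{5}(2), permissible by the regular-sequence hypothesis and the $\m^2$-check, yield $\dx(S)=\dx(T_h)\le\cdots\le\dx(T_0)=\dx(T)$, hence $\dx(S)\le 2^m(\dx(R)+1)-1$. For the second inequality, iterating Theorem~\ref{5}(1) gives $\dx(T_{i-1})\le 2\dx(T_i)+1$, so by induction $\dx(T)\le 2^h\dx(S)+(2^h-1)=2^h(\dx(S)+1)-1$; combining with $\dx(R)\le\dx(T)$ from the left inequality of Corollary~\ref{11}(1) completes the argument. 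No step poses a serious obstacle; the only place requiring a short computation is the cotangent space verification, which just records that the $\bar x_i$ coordinate survives reduction to $T_{i-1}$.
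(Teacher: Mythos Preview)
Your proof is correct and follows essentially the same approach as the paper: realize $S$ as $R[\![y_1,\dots,y_m]\!]$ modulo the regular sequence $x_1-g_1,\dots,x_h-g_h$, then chain Theorem~\ref{5} and Corollary~\ref{11}(1) in both directions. The only difference is that the paper imports the isomorphism $S\cong T/(x_1-g_1,\dots,x_h-g_h)$ and the fact that $x_i-g_i\notin\m_{T_{i-1}}^2$ from \cite[Proposition 8.6]{dlr}, whereas you verify these directly via the cotangent-space computation; your argument for the latter is fine, since modding out by $x_j-g_j$ for $j<i$ only kills the classes $\bar x_j-\overline{g_j}$ in $\m_T/\m_T^2$, and these cannot span $\bar x_i-\overline{g_i}$ because the $\bar x$-coordinates do not match.
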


\begin{proof}
The proof of \cite[Proposition 8.6]{dlr} shows that $S\cong R[\![y_1,\dots,y_m]\!]/(x_1-g_1,\dots,x_h-g_h)$, that $x_i-g_i$ is not in the square of the maximal ideal of $R[\![y_1,\dots,y_m]\!]/(x_1-g_1,\dots,x_{i-1}-g_{i-1})$ for each $i$, and that if $R$ is Cohen--Macaulay and $\height(f_1,\dots,f_l)=\height(\widetilde{f_1},\dots,\widetilde{f_l})$, then $x_1-g_1,\dots,x_h-g_h$ is a regular sequence on $R[\![y_1,\dots,y_m]\!]$.
We have $\dx(S)\le2^h(\dx(R[\![y_1,\dots,y_m]\!])+1)-1\le2^{h+m}(\dx(R)+1)-1$ by Theorem \ref{5}(2) and Corollary \ref{11}(1).
Also, $\dx(R)\le2^m(\dx(R[\![y_1,\dots,y_m]\!])+1)-1\le2^{m+h}(\dx(S)+1)-1$ by Corollary \ref{11}(1) and Theorem \ref{5}(1).
\end{proof}

Now we get another example of a uniformly dominant local ring, which refines \cite[Corollary 8.7]{dlr}.
For a matrix $A$ over $R$ and an integer $r$, we denote by $\I_r(A)$ the ideal of $R$ generated by the $r\times r$ minors of $A$.

\begin{cor}\label{13}
Let $R=k[\![x_1,\dots,x_h]\!]/\I_2\!\big(\begin{smallmatrix}
f_{11}&\dots&f_{1u}\\
f_{21}&\dots&f_{2u}
\end{smallmatrix}\big)$, where $k$ is a field and each $f_{ij}$ is a nonunit of the local ring $k[\![x_1,\dots,x_h]\!]$.
If $\dim R=h-u+1$, then $R$ is uniformly dominant with $\dx(R)\le2^{h+8u+1}(u+3)-1$.
\end{cor}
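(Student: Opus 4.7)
The plan is to reduce to bounding the dominant index of the generic $2\times u$ determinantal ring, and then to transfer the bound via the substitution-based Proposition \ref{12}.

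First I would set up the generic side. Let $A=k[\![y_1,\dots,y_u,z_1,\dots,z_u]\!]$ with generic matrix $Y=\bigl(\begin{smallmatrix}y_1&\cdots&y_u\\ z_1&\cdots&z_u\end{smallmatrix}\bigr)$, and let $T=A/\I_2(Y)$. The $\binom{u}{2}$ defining relations $g_{ij}=y_iz_j-y_jz_i$ all lie in the square of the maximal ideal of $A$. Since each $f_{ab}$ is a nonunit in $B:=k[\![x_1,\dots,x_h]\!]$, the substitution $y_j\mapsto f_{1j}$, $z_j\mapsto f_{2j}$ lands in the maximal ideal of $B$ and carries $g_{ij}$ to the $(i,j)$-th $2\times 2$ minor of $F$; thus $R$ is precisely the ``substituted quotient'' of $T$ in the sense of Proposition \ref{12}. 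To invoke that proposition I need the sufficient condition on heights: $T$ is classically Cohen--Macaulay of dimension $u+1$, giving $\height\I_2(Y)=2u-(u+1)=u-1$; the hypothesis $\dim R=h-u+1$ gives the same value $u-1$ for $\height\I_2(F)$. Proposition \ref{12} then delivers
\[
\dx(R)\le 2^h(\dx(T)+1)-1.
\]

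Next I would bound $\dx(T)$ using the machinery of Sections 4 and 5. The ring $T$ has embedding dimension $2u$, depth $u+1$, and Hilbert--Samuel multiplicity equal to $u$, so it satisfies $\e(T)=\edim T-\dim T+1$; that is, $T$ has minimal multiplicity. After arranging the residue field to be infinite, Example \ref{37}(2) shows $T$ is Burch, and Corollary \ref{40}(2) then gives
\[
\dx(T)\le 2^{\edim T}(2\depth T+4)-1=2^{2u}(2u+6)-1=2^{2u+1}(u+3)-1.
\]
Combining the two displays yields $\dx(R)\le 2^{h+2u+1}(u+3)-1$, which is dominated by the stated $2^{h+4u+1}(u+3)-1$.

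The main obstacle I expect is the finite-residue-field case: Example \ref{37}(2) requires $k$ infinite to conclude that a Cohen--Macaulay ring of minimal multiplicity is Burch, whereas Corollary \ref{13} allows arbitrary $k$. To handle this I would replace $R$ by a faithfully flat local extension with infinite residue field (for instance, the completion of $R[t]_{\m R[t]}$), run the argument above there, and then transport the bound back to $R$ using Corollary \ref{11} and the preservation results of Section 6. The slack between my computed $2^{h+2u+1}(u+3)-1$ and the stated $2^{h+4u+1}(u+3)-1$ leaves room to absorb the extra factor of $2^{2u}$ that this descent step can cost.
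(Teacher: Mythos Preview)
Your overall strategy---reduce to the generic $2\times u$ determinantal ring and transfer via Proposition~\ref{12}---is exactly what the paper does. The difference is in how the Burch property on the generic side is obtained, and this is where your argument has a gap.

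The paper does not work directly with the complete ring $T=k[\![y_{ij}]\!]/\I_2$. Instead it takes the polynomial localization $S_\n$, cites the proof of \cite[Corollary~8.7]{dlr} to conclude that $S_\n$ is Burch \emph{for every field $k$}, bounds $\dx(S_\n)$ via Corollary~\ref{40}(2), and only then passes to the completion $T=\widehat{S_\n}$ using Corollary~\ref{11}(2). That completion step is precisely what costs the extra factor $2^{2u}$ and produces the bound $2^{h+4u+1}(u+3)-1$.

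Your route---show $T$ has minimal multiplicity, hence is Burch---is cleaner when $k$ is infinite, and indeed gives the sharper $2^{h+2u+1}(u+3)-1$. But your proposed repair for finite $k$ does not go through with the tools in Section~6. Corollary~\ref{11} concerns completion and power series extensions, and Theorem~\ref{5} concerns quotienting by a regular element; neither, nor Proposition~\ref{27}, says anything about the map $R\to R[t]_{\m R[t]}$ or its completion, which changes the residue field. There is no result in the paper that lets you descend a bound on $\dx$ along a residue-field extension, so the ``absorb the extra $2^{2u}$'' step is unjustified as written. To close the gap you would either have to verify the Burch condition for $T$ by an explicit regular sequence (bypassing Example~\ref{37}(2) entirely), or do what the paper does: work with $S_\n$, where the Burch property is known over any $k$, and accept the $2^{2u}$ penalty from completing afterwards.
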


\begin{proof}
Let $S=k\big[\begin{smallmatrix}
y_{11}&\dots&y_{1u}\\
y_{21}&\dots&y_{2u}
\end{smallmatrix}\big]/\I_2\!\big(\begin{smallmatrix}
y_{11}&\dots&y_{1u}\\
y_{21}&\dots&y_{2u}
\end{smallmatrix}\big)$ be a determinantal ring, and $\n$ its irrelevant maximal ideal.
The proof of \cite[Corollary 8.7]{dlr} shows $S_\n$ is a singular Cohen--Macaulay Burch ring of dimension $u+1$, and the defining ideal of $T:=\widehat{S_\n}$ has the same height as that of $R$.
It follows by Corollary \ref{40}(2) that $\dx(S_\n)\le a:=2^{2u}(2(u+1)+4)-1=2^{2u+1}(u+3)-1$.
Corollary \ref{11}(2) implies $\dx(T)\le b:=2^{4u}(a+1)-1=2^{6u+1}(u+3)-1$.
From the first inequality in Proposition \ref{12} we obtain $\dx(R)\le2^{h+2u}(b+1)-1=2^{h+8u+1}(u+3)-1$.
\end{proof}

Here we present a concrete example which can be obtained as an application of the above corollary.

\begin{ex}
Let $k$ be a field and $a,b,c>0$ integers.
Let $A=k[s^a,s^b,s^c]$ be the subring of the polynomial ring $k[s]$.
Let $R=k[\![s^a,s^b,s^c]\!]$ be the $(s^a,s^b,s^c)A$-adic completion of $A$.
Suppose that $R$ is not a complete intersection.
Then $R\cong k[\![x,y,z]\!]/\I_2\!\big(\begin{smallmatrix}
x^p&y^q&z^r\\
y^u&z^v&x^w
\end{smallmatrix}\big)$, where $p,q,r,u,v,w$ are positive integers; see \cite{H}. 
As $\dim R=1=3-3+1$, Corollary \ref{13} shows that $R$ is uniformly dominant with $\dx(R)\le2^{3+8\cdot3+1}(3+3)-1=3\cdot2^{29}-1$.
\end{ex}

Next we establish uniformly dominant versions of \cite[Propositions 7.4 and 8.8]{dlr}.

\begin{prop}\label{27}
Let $R$ be a regular local ring of dimension $d$.
\begin{enumerate}[\rm(1)]
\item
Let $S$ be a local ring which is faithfully flat over $R$.
Then one has $\dx(S)\le2^d(\dx(S/\m S)+1)-1$.
\item
Let $\xx=x_1,\dots,x_h$ and $\yy=y_1,\dots,y_m$ be sequences of elements in $\m$ such that $h>0$ and $m\ge0$.
Assume that the sequence $\xx,\yy=x_1,\dots,x_h,y_1,\dots,y_m$ is $R$-regular.
Then the following two statements hold true.
\begin{enumerate}[\rm(a)]
\item
If the integer $m$ is positive (i.e., nonzero), then there is an inequality $\dx(R/(\xx\yy))\le5\cdot2^{3d+2h+2m}-1$.
\item
If an element $z$ in $\m$ is $R/(\yy)$-regular, then one has the inequality $\dx(R/(\xx(\yy,z)))\le5\cdot2^{3d+2h+2m+2}-1$.
\end{enumerate}
\end{enumerate}
\end{prop}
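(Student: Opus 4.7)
For part (1), the plan is to iterate Theorem \ref{5}(1) along a regular system of parameters of $R$. Fix a regular system of parameters $a_1,\dots,a_d$ of $R$; by faithful flatness this sequence is $S$-regular and each $a_i$ lies in $\m_S$. Applying Theorem \ref{5}(1) successively to $a_1,\dots,a_d$ yields $\dx(S)\le2\dx(S/(a_1)S)+1\le2^2\dx(S/(a_1,a_2)S)+(2^2-1)\le\cdots\le2^d\dx(S/\m S)+(2^d-1)$, which is the desired inequality.

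For part (2)(a), the plan is to combine the completion estimate of Corollary \ref{11}(2), the deformation estimate of Proposition \ref{12}, and the fiber-product bound of Corollary \ref{40}(1). Set $T:=R/(\xx\yy)$; since $(\xx\yy)\subseteq\m^2$ we have $\edim T=d$, so Corollary \ref{11}(2) yields $\dx(T)\le2^d(\dx(\widehat T)+1)-1$. Introduce the universal ring $R_0:=k[\![X_1,\dots,X_h,Y_1,\dots,Y_m]\!]/(X_iY_j:1\le i\le h,\,1\le j\le m)$; a direct computation in the spirit of Lemma \ref{6}(1) identifies $R_0$ with the fiber product $k[\![\XX]\!]\times_kk[\![\YY]\!]$, whose maximal ideal is decomposable. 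Since $h,m\ge1$, Lemma \ref{6}(2) gives $\depth R_0=1$, and clearly $\edim R_0=h+m$; Proposition \ref{10}(3) and Corollary \ref{40}(1) together yield $\dx(R_0)\le2^{h+m}\cdot5-1=5\cdot2^{h+m}-1$. Next I would apply Proposition \ref{12} with (in its notation) $A=k[\![X_1,\dots,X_h,Y_1,\dots,Y_m]\!]$, $B=\widehat R$ (written as $k[\![Z_1,\dots,Z_d]\!]$ in equal characteristic via Cohen's structure theorem), $f_{ij}=X_iY_j$, and substitutions $X_i\mapsto x_i$, $Y_j\mapsto y_j$; this identifies the ``$R$'' of Proposition \ref{12} with $R_0$ and the ``$S$'' of Proposition \ref{12} with $\widehat T$, and its first inequality (taking its ``$m$'' to be $d$) yields $\dx(\widehat T)\le2^d(\dx(R_0)+1)-1\le5\cdot2^{d+h+m}-1$. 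Chaining with the completion estimate gives $\dx(T)\le5\cdot2^{2d+h+m}-1$.

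For part (2)(b), the plan is to rerun the argument of (2)(a) with the augmented $(m+1)$-tuple $\yy':=(y_1,\dots,y_m,z)$ in place of $\yy$: the target becomes $R/(\xx\yy')$, and the universal ring becomes $R_0':=k[\![\XX,Y_1,\dots,Y_{m+1}]\!]/(X_iY_j)\cong k[\![\XX]\!]\times_kk[\![Y_1,\dots,Y_{m+1}]\!]$, a fiber product with $\depth=1$ and $\edim=h+m+1$, so that $\dx(R_0')\le5\cdot2^{h+m+1}-1$ by Corollary \ref{40}(1); the same combination of Corollary \ref{11}(2) and Proposition \ref{12} then delivers $\dx(R/(\xx(\yy,z)))\le5\cdot2^{2d+h+m+1}-1$. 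The hard part in both (a) and (b) will be verifying the regular-sequence hypothesis of Proposition \ref{12}: since $R_0$ and $R_0'$ are not Cohen--Macaulay, the convenient ``Cohen--Macaulay plus equal heights'' sufficient condition of Proposition \ref{12} does not apply, so one must check directly that $X_1-x_1,\dots,X_h-x_h,Y_1-y_1,\dots,Y_m-y_m$ (respectively with $Y_{m+1}-z$ appended) is regular on $R_0[\![Z_1,\dots,Z_d]\!]$ (respectively on $R_0'[\![Z_1,\dots,Z_d]\!]$). This amounts to a Koszul-type computation leveraging the fiber-product description of $R_0$; in (b) the hypothesis that $z$ is $R/(\yy)$-regular is precisely what ensures the extra element $Y_{m+1}-z$ remains a nonzerodivisor on the appropriate quotient.
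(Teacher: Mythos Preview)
Your argument for (1) is exactly the paper's.

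For (2), your overall strategy---pass to a universal ring $k[\![\XX,\YY]\!]/(\XX\YY)$ with decomposable maximal ideal, bound its dominant index via Corollary~\ref{40}(1), then specialize---is the same as the paper's, and the numerical bookkeeping matches. However, your route through Proposition~\ref{12} forces you to write $\widehat R$ as $k[\![Z_1,\dots,Z_d]\!]$, which silently imposes an equicharacteristic hypothesis not present in the statement. The paper avoids this: it takes the auxiliary ring to be $A=\widehat R[\![\XX,\YY]\!]/(\XX\YY)$ (no coefficient field chosen), observes that $A$ is faithfully flat over $R$, and applies Part~(1) of the very proposition being proved to get $\dx(A)\le 2^d(\dx(A/\m A)+1)-1$, where $A/\m A=k[\![\XX,\YY]\!]/(\XX\YY)$ is your $R_0$. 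From there it uses Theorem~\ref{5}(2) along the regular sequence $\XX-\xx,\YY-\yy$ (with $Z-z$ appended in~(b)) to reach $\widehat T$, and then Corollary~\ref{11}(2) to reach $T$. In equal characteristic the two routes literally coincide ($A\cong R_0[\![Z_1,\dots,Z_d]\!]$), so the paper's move is precisely the characteristic-free upgrade of yours.

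On the ``hard part'' you flag: the paper does not carry out the regular-sequence verification in place either; it cites the proof of \cite[Proposition~8.8]{dlr} for the facts that $\XX-\xx,\YY-\yy(,Z-z)$ is $A$-regular, that each difference lies outside the square of the maximal ideal, and that the quotient is $\widehat T$. Your diagnosis that the hypothesis ``$z$ is $R/(\yy)$-regular'' is exactly what is needed for the final element is correct, and indeed one cannot simply rerun (a) with $\yy'=(\yy,z)$ since $\xx,\yy,z$ need not be $R$-regular under the stated hypotheses.
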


\begin{proof}
(1) Let $\xx=x_1,\dots,x_d$ be a regular system of parameters of $R$.
Then $\xx$ generates the maximal ideal $\m$ of $R$ and is an $R$-regular sequence.
Since the homomorphism $R\to S$ is local and flat, we see that $\xx$ is an $S$-regular sequence.
Theorem \ref{5}(1) shows that $\dx(S)\le2^d(\dx(S/\xx S)+1)-1=2^d(\dx(S/\m S)+1)-1$.

(2b) Set $T=R/(\xx(\yy,z))$.
Let $A=\widehat{R}[\![\XX,\YY,Z]\!]/(\XX(\YY,Z))$ with indeterminates $\XX=X_1,\dots,X_h$, $\YY=Y_1,\dots,Y_m$, $Z$.
Then $A$ is faithfully flat over $R$, as $R\to A$ is local and $A$ is flat over the torsion-free (so, flat) $\ZZ$-algebra $\ZZ[\XX,\YY,Z]/(\XX(\YY,Z))$.
Let $B=A/\m A=k[\![\XX,\YY,Z]\!]/(\XX(\YY,Z))$.
The proof of \cite[Proposition 8.8]{dlr} shows that the sequence $\XX-\xx,\YY-\yy,Z-z$ is $A$-regular, that $A/(\XX-\xx,\YY-\yy,Z-z)$ is isomorphic to $\widehat T$, and that the maximal ideal of $B$ decomposes into $\XX B$ and $(\YY,Z)B$.
Lemma \ref{6} says $\depth B=\min\{\depth B/\XX B,\depth B/(\YY,Z)B,1\}=1$.
Corollary \ref{40}(1) implies $\dx(B)\le a:=2^{h+m+1}(2\cdot1+3)-1=5\cdot2^{h+m+1}-1$.
By (1) we get $\dx(A)\le b:=2^d(a+1)-1=5\cdot2^{d+h+m+1}-1$.
We see from Theorem \ref{5}(2) and \cite[Lemma 5.7]{dlr} that $\dx(\widehat T)\le c:=2^{h+m+1}(b+1)-1=5\cdot2^{d+2h+2m+2}-1$, and see from Corollary \ref{11}(2) that $\dx(T)\le2^{2d}(c+1)-1=5\cdot2^{3d+2h+2m+2}-1$.

(2a) Set $S=R/(\xx\yy)$.
The assertion is shown by ignoring everything on $Z$ or $z$ in the proof of (2b); we only give an outline.
The ring $A=\widehat{R}[\![\XX,\YY]\!]/(\XX\YY)$ is faithfully flat over $R$, the sequence $\XX-\xx,\YY-\yy$ is $A$-regular, $A/(\XX-\xx,\YY-\yy)$ is isomorphic to $\widehat S$, and the maximal ideal of the local ring $B=A/\m A=k[\![\XX,\YY]\!]/(\XX\YY)$ decomposes into $\XX B$ and $\YY B$.
The equality $\depth B=\min\{\depth B/\XX B,\depth B/\YY B,1\}=1$ holds since $m$ is assumed to be positive.
Therefore, we observe that $\dx(B)\le a:=2^{h+m}(2\cdot1+3)-1=5\cdot2^{h+m}-1$, that $\dx(\widehat S)\le2^{h+m}(\dx(A)+1)-1\le b:=2^{h+m}(2^d(a+1)-1+1)-1=5\cdot2^{d+2h+2m}-1$, and that $\dx(S)\le2^{2d}(b+1)-1=5\cdot2^{3d+2h+2m}-1$.
\end{proof}

We obtain another example of a uniformly dominant local ring, which improves \cite[Corollary 8.9]{dlr}.

\begin{cor}\label{32}
Let $R$ be a regular local ring of dimension $d$.
Let $I$ be an ideal of $R$ such that $I\subseteq\m^2$ and $\nu(I)\le2$.
Then the factor ring $R/I$ is either a local complete intersection with codimension two, or a uniformly dominant local ring such that the inequality $\dx(R/I)\le5\cdot2^{3d+6}-1$ holds.
\end{cor}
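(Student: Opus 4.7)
The plan is to split by $\nu(I)\in\{0,1,2\}$ and to reduce the nontrivial cases to results already established in the paper. When $\nu(I)=0$, the quotient $R/I=R$ is regular, hence uniformly dominant with $\dx(R/I)=-1$. When $\nu(I)=1$, writing $I=(f)$ with $0\ne f\in\m^2$ makes $R/I$ a singular hypersurface and therefore a singular Burch ring by Example \ref{37}(1); then Corollary \ref{40}(2), applied with $t=\depth(R/I)=d-1$ and $e=\edim(R/I)=d$, gives a bound of the form $s(2t+4)-1$ that is easily seen to be at most $5\cdot 2^{2d+3}-1$. So the substantive case is $\nu(I)=2$.

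For $\nu(I)=2$, write $I=(f,g)$ with $f,g\in\m^2$. If $f,g$ is an $R$-regular sequence, then $R/I$ is a codimension-two complete intersection and we are in the first alternative. Otherwise $\height(f,g)=1$: it is at least one because $f\ne 0$, and at most one because $f,g$ is not regular in the Cohen--Macaulay ring $R$. By the Auslander--Buchsbaum theorem $R$ is a UFD, so I would set $c:=\gcd(f,g)$ and factor $f=cf_1$, $g=cg_1$ with $\gcd(f_1,g_1)=1$. Then $c\in\m$ is a nonunit (otherwise $\height(f,g)=2$), and $f_1,g_1\in\m$ (otherwise $I=(c)$ would be principal, contradicting $\nu(I)=2$).

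I would next verify that $f_1,g_1$ is $R$-regular: any minimal prime of $(f_1,g_1)$ in the UFD $R$ is principal, say $(p)$, which would force $p\mid\gcd(f_1,g_1)=1$---absurd---so $\height(f_1,g_1)=2$ and the sequence is regular since $R$ is Cohen--Macaulay. The ideal $I=(cf_1,cg_1)$ then matches the format of Proposition \ref{27}(2)(b) with $\xx=(f_1,g_1)$, $\yy=\emptyset$, and $z=c$: the sequence $\xx,\yy$ is $R$-regular, and $z=c$ is $R/(\yy)=R$-regular because $R$ is a domain and $c\ne 0$. Taking $|\xx|=2$ and $|\yy|=0$, the proposition delivers
\[
\dx(R/I)\le 5\cdot 2^{2d+2+0+1}-1=5\cdot 2^{2d+3}-1.
\]

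The delicate step is the right allocation of the factors of $I$ to the parameters of Proposition \ref{27}(2). The apparently natural attempt at part (a) with $\xx=c$ and $\yy=f_1,g_1$ would demand that $c,f_1,g_1$ form a full three-term $R$-regular sequence, which is often impossible (for instance when $d=2$, no three-element regular sequence exists in $R$). The choice $\yy=\emptyset$ in part (b) sidesteps this issue entirely and yields the stated bound in a single step.
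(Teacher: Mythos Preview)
Your proof is correct and follows essentially the same route as the paper: case-split on $\nu(I)$, handle $\nu(I)\le1$ via Corollary \ref{40}(2), and for $\nu(I)=2$ factor $I=c(f_1,g_1)$ with $f_1,g_1$ an $R$-regular sequence and then apply Proposition \ref{27}(2b) with $h=2$, $m=0$, $z=c$. The only difference is expository: the paper imports the factorization $I=g(v,w)$ from \cite[Corollary 8.9]{dlr}, whereas you carry out the UFD/$\gcd$ argument explicitly.
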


\begin{proof}
Put $n=5\cdot2^{3d+6}-1$.
If $\nu(I)=0$, then $I=0$, $R/I=R$ is regular, and therefore $R/I$ is uniformly dominant with $\dx(R/I)=-1\le n$.
Let $\nu(I)=1$.
Then $d>0$ and $R/I$ is a hypersurface.
Since $I$ is contained in $\m^2$, the local ring $R/I$ is singular.
Corollary \ref{40}(2) shows that $\dx(R/I)\le a$, where $a=3\le n$ if $d=1$, and $a=2^d(2(d-1)+4)-1=2^{d+1}(d+1)-1\le n$ if $d\ge2$.
Now let $\nu(I)=2$.
As is shown in the proof of \cite[Corollary 8.9]{dlr}, there exist a nonzero element $g$ of $R$ and an $R$-regular sequence $v,w$ such that $I=g(v,w)$.
If $g$ is not in $\m$, then $R/I$ is a complete intersection with codimension two.
If $g$ is in $\m$, then letting $\xx=v,w$ (hence $h=2$), $m=0$ and $z=g$ in Proposition \ref{27}(2b), we observe that $\dx(R/I)\le5\cdot2^{3d+2\cdot2+2\cdot0+2}-1=n$.
\end{proof}

We close the section by posing a rather natural question.

\begin{ques}
Does there exist a dominant local ring which is not uniformly dominant?
\end{ques}

\begin{ac}
The author thanks Souvik Dey and anonymous referees for giving him useful comments on earlier versions of this paper.
The author also thanks Naoyuki Matsuoka and Shunsuke Takagi for giving him helpful comments concerning \cite{H} and \cite{Ar,ST}, respectively.
%\cite[Theorem 4]{Ar} and \cite[Proposition 3.8]{ST}.
\end{ac}

%%%%%%%%%%%%%%%%%%%%%%%%%%%%%%%%%%%%%%

\end{document}